\newlist{steps}{enumerate}{1}
\setlist[steps, 1]{label = Step \arabic*:}
\setlist{topsep=0ex,itemsep=1ex}
    \newcommand{\thzfc}{\mathrm{ZFC}}
    \newcommand{\Awf}{\mathcal{A}}
    \newcommand{\Ewf}{\mathcal{E}}
    \newcommand{\Iwf}{\mathcal{I}}
    \newcommand{\Jwf}{\mathcal{J}}
    \newcommand{\Mwf}{\mathcal{M}}
    \newcommand{\Nwf}{\mathcal{N}}
    \newcommand{\Pwf}{\mathcal{P}}
    \newcommand{\Swf}{\mathcal{S}}
    \newcommand{\bfrak}{\mathfrak{b}}
    \newcommand{\cfrak}{\mathfrak{c}}
    \newcommand{\dfrak}{\mathfrak{d}}
    \newcommand{\menos}{\smallsetminus}
    \newcommand{\pts}{\mathcal{P}}
    \newcommand{\frestr}{{\upharpoonright}}
    \DeclareMathOperator{\add}{add}
    \DeclareMathOperator{\cov}{cov}
    \DeclareMathOperator{\non}{non}
    \DeclareMathOperator{\cof}{cof}
    \newcommand{\Dor}{\mathbb{D}}
    \newcommand{\Loc}{\mathbb{LOC}}
    \newcommand{\Por}{\mathbb{P}}
    \newcommand{\Qor}{\mathbb{Q}}
    \newcommand{\Ior}{\mathbb{I}}
    \newcommand{\Qnm}{\dot{\mathbb{Q}}}
    \newcommand{\SNwf}{\mathcal{SN}}
    \DeclareMathOperator{\cf}{cf}
    \newcommand{\imp}{\Rightarrow}
    \newcommand{\la}{\langle}
    \newcommand{\ra}{\rangle}
\newcommand{\supLc}{\mathrm{supLc}}
\newcommand{\minLc}{\mathrm{minLc}}
\newcommand{\Fn}{\mathrm{Fn}}
\newcommand{\Rbf}{\mathbf{R}}
\newcommand{\Cv}{\mathrm{Cv}}
\newcommand{\Lc}{\mathrm{Lc}}
\DeclareMathOperator{\Lb}{Lb}
\newcommand{\Scal}{\mathcal{S}}
\DeclareMathOperator{\id}{id}
\newcommand{\blc}{\mathfrak{b}^{\mathrm{Lc}}}
\newcommand{\dlc}{\mathfrak{d}^{\mathrm{Lc}}}
\newcommand{\vfa}{\mathfrak{v}}
\newcommand{\sqrb}{\sqsubset^{\bullet}}
\newcommand{\leqT}{\leq_{\mathrm{T}}}
\newcommand{\eqT}{\cong_{\mathrm{T}}}
\DeclareMathOperator{\minadd}{minadd}
\DeclareMathOperator{\mincof}{mincof}
\DeclareMathOperator{\mincov}{mincov}
\DeclareMathOperator{\minnon}{minnon}
\DeclareMathOperator{\supcof}{supcof}
\DeclareMathOperator{\supcov}{supcov}
\DeclareMathOperator{\supadd}{supadd}
\DeclareMathOperator{\supnon}{supnon}
\newcommand{\set}[2]{\{#1 \colon #2\}}
\newcommand{\seq}[2]{\la #1 \colon #2\ra}
\newcommand{\baire}{{}^{\omega}\omega}
\newcommand{\baireinc}{{}^{\uparrow\omega}\omega}
\newcommand{\cantor}{{}^{\omega}2}
\newcommand{\lay}{\text{\ and\ }}
\newcommand{\NAwf}{\Nwf\!\Awf}
\newcommand{\MAwf}{\Mwf\Awf}
\newcommand{\IAwf}{\Iwf\Awf}
\newcommand{\EAwf}{\Ewf\Awf}
\def\mathunderaccent#1#2 {\let\theaccent#1\skewfactor#2
\mathpalette\putaccentunder}
\def\putaccentunder#1#2{\oalign{$#1#2$\crcr\hidewidth
\vbox to.2ex{\hbox{$#1\skew\skewfactor\theaccent{}$}\vss}\hidewidth}}
	\definecolor{ultramarineblue}{rgb}{0.25, 0.4, 0.96}
\definecolor{cornellred}{rgb}{0.7, 0.11, 0.11}
\definecolor{cobalt}{rgb}{0.0, 0.28, 0.67}
\definecolor{bleudefrance}{rgb}{0.19, 0.55, 0.91}
\definecolor{darkblue}{rgb}{0.0, 0.0, 0.55}
\definecolor{ferrarired}{rgb}{1.0, 0.11, 0.0}
\definecolor{brandeisblue}{rgb}{0.0, 0.44, 1.0}
\definecolor{azure(colorwheel)}{rgb}{0.0, 0.5, 1.0}
\definecolor{aqua}{rgb}{0.0, 1.0, 1.0}
\definecolor{aguamarina}{cmyk}{0.85,0,0.33,0}
\definecolor{cafe}{cmyk}{0,0.81,1,0.60}
\definecolor{canela}{cmyk}{0.14,0.42,0.56,0}
\definecolor{darkgray}{cmyk}{0,0,0,0.50}
\definecolor{emerald}{cmyk}{0.91,0,0.88,0.12}
\definecolor{fresa}{cmyk}{0,1,0.50,0}
\definecolor{gold}{cmyk}{0,0.10,0.84,0}
\definecolor{lightgray}{cmyk}{0,0,0,0.30}
\definecolor{marron}{cmyk}{0,0.72,1,0.45}
\definecolor{melon}{cmyk}{0,0.29,0.84,0}
\definecolor{ladri}{cmyk}{0,0.77,0.87,0}
\definecolor{olive}{cmyk}{0.64,0,0.95,0.40}
\definecolor{orange}{cmyk}{0,0.42,1,0}
\definecolor{peach}{cmyk}{0,0.46,0.50,0}
\definecolor{pink}{cmyk}{0,0.10,0.10,0}
\definecolor{orange}{cmyk}{0,0.42,1,0}
\definecolor{pine}{cmyk}{0.92,0,0.59,0.25}
\definecolor{purple}{cmyk}{0.45,0.86,0,0}
\definecolor{violet}{cmyk}{0.07,0.90,0,0.34}
\definecolor{craneorange}{RGB}{252,187,6}
\definecolor{red(ncs)}{rgb}{0.77, 0.01, 0.2}
\definecolor{caribbeangreen}{rgb}{0.0, 0.8, 0.6}
\definecolor{aguamarina}{cmyk}{0.85,0,0.33,0}
\definecolor{cafe}{cmyk}{0,0.81,1,0.60}
\definecolor{canela}{cmyk}{0.14,0.42,0.56,0}
\definecolor{darkgray}{cmyk}{0,0,0,0.50}
\definecolor{emerald}{cmyk}{0.91,0,0.88,0.12}
\definecolor{fresa}{cmyk}{0,1,0.50,0}
\definecolor{gold}{cmyk}{0,0.10,0.84,0}
\definecolor{lightgray}{cmyk}{0,0,0,0.30}
\definecolor{marron}{cmyk}{0,0.72,1,0.45}
\definecolor{melon}{cmyk}{0,0.29,0.84,0}
\definecolor{ladri}{cmyk}{0,0.77,0.87,0}
\definecolor{olive}{cmyk}{0.64,0,0.95,0.40}
\definecolor{orange}{cmyk}{0,0.42,1,0}
\definecolor{peach}{cmyk}{0,0.46,0.50,0}
\definecolor{pink}{cmyk}{0,0.10,0.10,0}
\definecolor{orange}{cmyk}{0,0.42,1,0}
\definecolor{pine}{cmyk}{0.92,0,0.59,0.25}
\definecolor{purple}{cmyk}{0.45,0.86,0,0}
\definecolor{violet}{cmyk}{0.07,0.90,0,0.34}
\DeclareSymbolFont{extraup}{U}{zavm}{m}{n}
\DeclareMathSymbol{\varheart}{\mathalpha}{extraup}{86}
\DeclareMathSymbol{\vardiamond}{\mathalpha}{extraup}{87}
\definecolor{dodger}{rgb}{0.0,0.5,1.0}
\newcommand{\dodger}[1]{{\color{dodger}#1}}
\definecolor{amber}{rgb}{1.0,0.49,0.0}
\definecolor{ogreen}{RGB}{107,142,35}
\title[Directed schemes of ideals and cardinal characteristics, I]{
Directed schemes of ideals and cardinal characteristics, I:\\ the meager additive ideal}
\author{Miguel A. Cardona}
\address{Faculty of Engineering, Instituci\'on Universitaria Pascual Bravo. Calle 73 No.~73A -- 226, Medell\'in, Colombia;
and\newline 
Einstein Institute of Mathematics, 
Edmond J. Safra Campus, Givat Ram\\
The Hebrew University of Jerusalem\\
Jerusalem, 91904, Israel}
\email{\href{mailto:miguel.cardona@pascualbravo.edu.co}{miguel.cardona@pascualbravo.edu.co}}
\urladdr{\url{https://sites.google.com/view/miacardonamo}}
\author{Diego A.~Mej\'ia}
\address{Graduate School of System Informatics, Kobe University. 1-1 Rokkodai-cho, Nada-ku, Kobe, Hyogo 657-8501 Japan}
\email{\href{mailto:damejiag@people.kobe-u.ac.jp}{damejiag@people.kobe-u.ac.jp}}
\urladdr{\url{https://researchmap.jp/mejia?lang=en}}
\author[I.E.~Rivera-Madrid]{Ismael E.\ Rivera-Madrid}
\address{Faculty of Engineering, Instituci\'on Universitaria Pascual Bravo. Calle 73 No. 73A - 226, Medell\'in, Colombia.}
\email{\href{mailto:ismael.rivera@pascualbravo.edu.co}{ismael.rivera@pascualbravo.edu.co}}
\subjclass[2020]{03E17, 03E05, 03E35}
\keywords{Directed scheme of ideals, meager-additive ideal, cardinal characteristics, null-additive ideal, forcing models}
\thanks{The first author was partially supported by the Israel Science Foundation by grant 2320/23 (2023-2027); The second author was supported by the Grant-in-Aid for Scientific Research (C) 23K03198, Japan Society for the Promotion of Science; and the first and third authors were supported by the grant No.~AP0010, Direcci\'on de Tecnolog\'ia e Investigaci\'on and Oficina de Internacionalizaci\'on, Instituci\'on Universitaria Pascual Bravo}
\definecolor{sub0}{RGB}{29,32,137}
\definecolor{sub1}{RGB}{1,71,157}
\definecolor{sub2}{RGB}{1,104,183}
\definecolor{sub3}{RGB}{0,160,234}
\definecolor{sug}{RGB}{0,154,68}
\definecolor{suy}{RGB}{208,219,1}
\begin{document}

\makeatletter
\def\@roman#1{\romannumeral #1}
\newcommand{\startlist}{\ \@beginparpenalty=10000}
\makeatother

\newcounter{enuAlph}
\renewcommand{\theenuAlph}{\Alph{enuAlph}}

\numberwithin{equation}{section}
\renewcommand{\theequation}{\thesection.\arabic{equation}}


\theoremstyle{plain}
  \newtheorem{theorem}[equation]{Theorem}
  \newtheorem{corollary}[equation]{Corollary}
  \newtheorem{lemma}[equation]{Lemma}
  \newtheorem{mainlemma}[equation]{Main Lemma}
  \newtheorem*{mainthm}{Main Theorem}
  \newtheorem{prop}[equation]{Proposition}
  \newtheorem{clm}[equation]{Claim}
  \newtheorem{subclm}[equation]{Subclaim}
  \newtheorem{fact}[equation]{Fact}
  \newtheorem{exer}[equation]{Exercise}
  \newtheorem{question}[equation]{Question}
  \newtheorem{aim}[equation]{Aim}
  \newtheorem{problem}[equation]{Problem}
  \newtheorem{conjecture}[equation]{Conjecture}
  \newtheorem{assumption}[equation]{Assumption}
    \newtheorem{hopethm}[equation]{Hopeful Theorem}
    \newtheorem{challenging}[enuAlph]{Main challenging}
    \newtheorem{hopele}[equation]{Hopeful Lemma}
    \newtheorem{discussion}[equation]{Discussion}
  \newtheorem*{thm}{Theorem}
  \newtheorem{teorema}[enuAlph]{Theorem}
  \newtheorem*{corolario}{Corollary}
\theoremstyle{definition}
  \newtheorem{definition}[equation]{Definition}
  \newtheorem{example}[equation]{Example}
  \newtheorem{remark}[equation]{Remark}
  \newtheorem{notation}[equation]{Notation}
  \newtheorem{context}[equation]{Context}

  \newtheorem*{defi}{Definition}
  \newtheorem*{acknowledgements}{Acknowledgements}

\def\sectionautorefname{Section}
\def\subsectionautorefname{Subsection}


\begin{abstract}
We introduce the notion of \emph{directed scheme of ideals} to characterize peculiar ideals on the reals, which comes from a formalization of the framework of Yorioka ideals for strong measure zero sets. We prove general theorems for directed schemes and propose a directed scheme $\vec{\mathcal{M}} = \{\mathcal{M}_I \colon I\in\mathbb{I}\}$ for the ideal $\mathcal{MA}$ of meager-additive sets of reals. This directed scheme does not only helps us to understand more the combinatorics of $\mathcal{MA}$ and its cardinal characteristics, but provides us new characterizations of the additivity and cofinality numbers of the meager ideal of the reals. 


In addition, we display connections between the characteristics associated with $\Mwf_I$ and other classical characteristics. Furthermore, we demonstrate the consistency of $\mathrm{cov}(\mathcal{N}\!\mathcal{A})<\mathfrak{c}$ and $\mathrm{cof}(\mathcal{MA})<\mathrm{non}(\mathcal{SN})$. The first one answers a question raised by the authors in~\cite{CMR2}. 
\end{abstract}
\maketitle


\makeatother

\section{Introduction}\label{s0}

Recent study of $\SNwf$, the ideal of strong measure zero subsets of $\cantor$, and its cardinal characteristics, involves its characterization via the \emph{Yorioka ideals}. These have been used to control de cofinality of $\SNwf$ in forcing models~\cite{Yorioka,CM,cardona,CMRM,CarMej23}, as well as its additivity number~\cite{BCM2}. These ideals have helped to overcome challenges to control the combinatorics of $\SNwf$ and its cardinal characteristics, particularly in forcing models.

We aim to generalize the framework of Yorioka ideals to control de combinatorics of other very peculiar ideals. In this paper, we give special attention to the ideal of meager-additive subsets of $\cantor$.  


\begin{notation}\label{nt:1}
Before proceeding, we first fix some notation.
\begin{enumerate}[label = \normalfont(\arabic*)]
    \item\label{nt:1:1} Given a formula $\phi$, $\forall^\infty n<\omega\colon \phi$ means that all but finitely many natural numbers satisfy $\phi$; $\exists^\infty n<\omega\colon \phi$ means that infinitely many natural numbers satisfy $\phi$.

    \item\label{nt:1:2}  $\baireinc$ denotes the set of all increasing functions in $\baire$.

    \item\label{nt:1:3} For $x,y\in\baire$, $x\leq^* y$ means that $\forall^\infty n<\omega\colon x(i)\leq y(i)$.

    \item The identity function on $\omega$ is denoted by $\id$. 

    \item\label{nt:1:4} $\cfrak:=2^{\aleph_0}$; $\Nwf$ and $\Mwf$ denote the ideals of Lebesgue measure zero sets and of meager sets in the Cantor space  $\cantor$, respectively. Let $\Ewf$ be the $\sigma$-ideal generated by the closed measure zero subsets of $\cantor$. It is well-known that $\Ewf\subseteq\Nwf\cap\Mwf$. Even more, it was proved that $\Ewf$ is a proper subideal of $\Nwf\cap\Mwf$ (see~\cite[Lemma 2.6.1]{bartJudah}).  

    \item\label{nt:1:5} A \emph{preorder} is a pair $\la P,\leq\ra$ where $P$ is a set and $\leq$ is a reflexive and transitive relation on $P$.

    \item\label{nt:1:6} A preorder $\la S,\leq\ra$ is \emph{directed} if, for $s,s'\in S$, there is some $t\in S$ such that $s\leq t$ and $s'\leq t$. 

    \item\label{nt:1:7} $\Ior$ denotes the set of partitions $I=\seq{I_n}{n<\omega}$ of $\omega$ into consecutive non-empty intervals. We consider the directer preorder on $\Ior$ defined by $I\sqsubseteq J$ iff $\forall^\infty n<\omega\, \exists k<\omega\colon I_k\subseteq J_n$. 
\end{enumerate}
Given an ideal $\Iwf$ of subsets of $X$ such that $\{x\}\in \Iwf$ for all $x\in X$, \emph{the cardinal characteristics associated with $\Iwf$} are defined by
\begin{align*}
 \add(\Iwf)&:=\min\set{|\Jwf|}{\Jwf\subseteq\Iwf\text{\ and } \bigcup\Jwf\notin\Iwf};\\
     \cov(\Iwf)&:=\min\set{|\Jwf|}{\Jwf\subseteq\Iwf\text{\ and }\bigcup\Jwf=X};\\
    \non(\Iwf)&:=\min\set{|A|}{A\subseteq X\text{\ and }A\notin\Iwf};\\
     \cof(\Iwf)&:=\min\set{|\Jwf|}{\Jwf\subseteq\Iwf\text{\ is cofinal in }\la\Iwf,\subseteq\ra}.   
\end{align*}
These cardinals are called \emph{additivity, covering, uniformity, and cofinality of $\Iwf$}, respectively. The relationship between the cardinals defined above is illustrated in \autoref{diag:idealI}.
\end{notation}

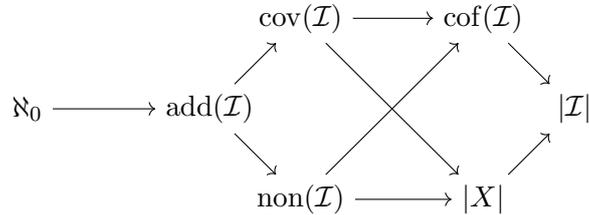
\begin{figure}[ht!]
\centering
\begin{tikzpicture}[scale=1.2]
\small{
\node (azero) at (-1,1) {$\aleph_0$};
\node (addI) at (1,1) {$\add(\Iwf)$};
\node (covI) at (2,2) {$\cov(\Iwf)$};
\node (nonI) at (2,0) {$\non(\Iwf)$};
\node (cofI) at (4,2) {$\cof(\Iwf)$};
\node (sizX) at (4,0) {$|X|$};
\node (sizI) at (5,1) {$|\Iwf|$};

\draw (azero) edge[->] (addI);
\draw (addI) edge[->] (covI);
\draw (addI) edge[->] (nonI);
\draw (covI) edge[->] (sizX);
\draw (nonI) edge[->] (sizX);
\draw (covI) edge[->] (cofI);
\draw (nonI) edge[->] (cofI);
\draw (sizX) edge[->] (sizI);
\draw (cofI) edge[->] (sizI);
}
\end{tikzpicture}
\caption{Diagram of the cardinal characteristics associated with $\Iwf$. An arrow  $\mathfrak x\rightarrow\mathfrak y$ means that (provably in ZFC) 
    $\mathfrak x\le\mathfrak y$.}
\label{diag:idealI}
\end{figure}

Yorioka~\cite{Yorioka} defined ideals $\Iwf_f$ on $\cantor$, parametrized by $f\in\baireinc$, which have Borel bases and characterize $\SNwf:=\bigcap_{f\in\baireinc}\Iwf_f$. A further property of the Yorioka ideals is that $\Iwf_g\subseteq \Iwf_f$ whenever $f\leq ^* g$. 

These ideals led Yorioka to rediscover and significantly improve Seredy\'nski \cite{Sered} result about the cofinality of $\SNwf$, and to show that no inequality between $\cof(\SNwf)$ and $\cfrak$ can be decided in $\thzfc$. More research about the cofinality has been conducted by the first and second authors~\cite{cardona, CM23}. One representative result is the following, whose notation is clarified afterward.

\begin{theorem}[{\cite[Thm.~4.6]{CM23}}]\label{thm:a12}
$\cof(\SNwf)\leq\cov\left(\left([\supcof(\vec\Swf)]^{<\minadd(\vec\Swf)}\right)^\dfrak\right).$
\end{theorem}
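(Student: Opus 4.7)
The plan is to exploit the assumed characterization of $\SNwf$ as the intersection of a directed scheme $\vec\Swf = \{\Swf_I : I \in \Ior\}$, combining three ingredients: a cofinal family inside each layer $\Swf_I$ (controlled by $\supcof(\vec\Swf)$), a $\sqsubseteq$-dominating family in $\Ior$ (of size $\dfrak$), and a ``slalom-style'' cover of small-support functions (controlled by $\minadd(\vec\Swf)$).

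First, for each $I \in \Ior$, I would fix a cofinal family $\{S_I^\alpha : \alpha < \kappa_I\}$ in $\Swf_I$ with $\kappa_I \leq \nu := \supcof(\vec\Swf)$, re-indexing so that the parameter runs over the single cardinal $\nu$. Given any $X \in \SNwf$, for every $I \in \Ior$ there is $\alpha_I < \nu$ with $X \subseteq S_I^{\alpha_I}$, so $X$ determines a function $F_X \colon \Ior \to \nu$. Next I would replace $\Ior$ by a $\sqsubseteq$-dominating family $\Dwf \subseteq \Ior$ of size $\dfrak$; by the $\sqsubseteq$-monotonicity built into the notion of a directed scheme (that $\Swf_I \subseteq \Swf_J$ whenever $I \sqsubseteq J$, or the appropriate cofinal analogue), it is enough to remember the restriction $F_X\!\upharpoonright\! \Dwf \in {}^{\Dwf}\nu$.

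Now comes the main step, where $\mu := \minadd(\vec\Swf)$ enters. Let $\Sigma$ be a cover of $\bigl([\nu]^{<\mu}\bigr)^{\Dwf}$ of minimum cardinality witnessing the right-hand side, that is, such that every $F \colon \Dwf \to \nu$ has some $\sigma \in \Sigma$ with $F(I) \in \sigma(I)$ for all $I \in \Dwf$. For each $\sigma \in \Sigma$, define
\[
Y_\sigma \defeq \bigcap_{I \in \Dwf}\; \bigcup_{\alpha \in \sigma(I)} S_I^\alpha.
\]
Each inner union has size $<\mu \leq \add(\Swf_I)$ (this is what $\minadd(\vec\Swf) \geq \mu$ should encode for every $I$), hence lies in $\Swf_I$; the intersection over $\Dwf$ then belongs to $\bigcap_I \Swf_I = \SNwf$. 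Given $X \in \SNwf$, apply the cover to $F_X\!\upharpoonright\!\Dwf$ to obtain $\sigma \in \Sigma$ with $\alpha_I \in \sigma(I)$ for every $I \in \Dwf$, which forces $X \subseteq S_I^{\alpha_I} \subseteq \bigcup_{\alpha \in \sigma(I)} S_I^\alpha$ for each such $I$, and hence $X \subseteq Y_\sigma$. Thus $\{Y_\sigma : \sigma \in \Sigma\}$ is cofinal in $\SNwf$, giving the bound $\cof(\SNwf) \leq |\Sigma|$.

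The step I expect to be the main obstacle is verifying that the inner union $\bigcup_{\alpha \in \sigma(I)} S_I^\alpha$ genuinely stays inside $\Swf_I$ for \emph{every} $I$ simultaneously: this is precisely what the parameter $\minadd(\vec\Swf)$ is designed to deliver, but one must confirm that the paper's definition of $\minadd$ and $\supcof$ for a directed scheme really yields uniform additivity and cofinality bounds that survive the reindexing of cofinal families into the single cardinal $\nu$. A secondary care point is the reduction to $\Dwf$, which needs the $\sqsubseteq$-monotonicity axiom of a directed scheme; once those structural facts are in place, the rest is a routine diagonal construction.
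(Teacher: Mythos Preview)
Your argument is correct and is exactly the paper's proof unpacked from the Tukey-connection language. The paper derives the bound by chaining the Tukey inequalities
\[
\Iwf^*\;\leqT\;\prod_{s\in D}\Iwf_s\;\leqT\;\prod_{s\in D}\Cv_{[\cof(\Iwf_s)]^{<\add(\Iwf_s)}}\;\leqT\;\Cv^{D}_{[\supcof(\vec\Iwf)]^{<\minadd(\vec\Iwf)}},
\]
where the first map sends $A$ to the constant sequence and $\seq{A_s}{s\in D}$ to $\bigcap_{s\in D}A_s$, and the middle step is the generic fact that any directed preorder $P$ satisfies $P\leqT\Cv_{[\dfrak(P)]^{<\bfrak(P)}}$ (fix a cofinal family and send a small index set to the union). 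Your $F_X$, $\Sigma$, and $Y_\sigma$ are precisely the $\Psi_-$, the dominating family, and the $\Psi_+$-images of this composite Tukey connection, written out by hand.

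Two small corrections. First, the Yorioka scheme $\vec\Swf$ is indexed by $\la\baireinc,\leq^*\ra$, not by $\Ior$; this is harmless since $\cof(\baireinc,\leq^*)=\dfrak$ as well, but you should use the right index set. Second, your monotonicity clause is stated in the wrong direction: in a directed scheme one has $\Iwf_{s'}\subseteq\Iwf_s$ when $s\leq s'$, not the reverse. You actually \emph{use} the correct direction when arguing that $Y_\sigma\in\bigcap_{I\in\Dwf}\Swf_I=\SNwf$ (cofinal $\Dwf$ plus $\Swf_J\subseteq\Swf_I$ for $I\leq J$ gives $\bigcap_{\Dwf}\Swf_I=\Swf^*$), so the proof stands; only the parenthetical needs fixing.
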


We discovered that the result above is not associated with the meaning of \emph{strong measure zero} but of the framework given by the Yorioka ideals. We generalize this framework below, with the central notion of this paper.

\begin{definition}\label{def:ds}
Let $X$ be a set and let $S$ be a directed preorder. We say that a sequence $\vec\Iwf=\seq{\Iwf_s}{s\in S}$ of ideals on $X$ is a \emph{directed scheme of ideals on $X$} if $\Iwf_{s'}\subseteq \Iwf_s$ whether $s\leq s'$. Put $\vec{\Iwf}^*=\bigcap_{s\in S}\Iwf_s$, which we simply abbreviate $\Iwf^*$ when clear from the context. 

These directed schemes of ideals give rise to the study of the cardinals associated with $\Iwf_s$. Moreover, we associate the following cardinal characteristics to $\vec{\Iwf}$. 
\begin{align*}
    \minadd(\vec{\Iwf}) & := \min_{s\in S}\add(\Iwf_s), & \supadd(\vec{\Iwf}) & := \sup_{s\in S}\add(\Iwf_s),\\
    \mincov(\vec{\Iwf}) & := \min_{s\in S}\cov(\Iwf_s), & \supcov(\vec{\Iwf}) & := \sup_{s\in S}\cov(\Iwf_s),\\
    \minnon(\vec{\Iwf}) & := \min_{s\in S}\non(\Iwf_s), & \supnon(\vec{\Iwf}) & := \sup_{s\in S}\non(\Iwf_s),\\
    \mincof(\vec{\Iwf}) & := \min_{s\in S}\cof(\Iwf_s). & \supcof(\vec{\Iwf}) & := \sup_{s\in S}\cof(\Iwf_s).
\end{align*}
\end{definition}

It is clear that $\cov(\Iwf_s) \leq \cov(\Iwf_{s'})$ and $\non(\Iwf_{s'})\leq\non(\Iwf_s)$ whenever $s\leq s'$ in $S$. Then, whenever $S$ has a minimum element $s_0$, $\mincov(\vec\Iwf) = \cov(\Iwf_{s_0})$ and $\supnon(\vec\Iwf) = \non(\Iwf_{s_0})$.

For example, the Yorioka ideals form a directed scheme $\vec\Swf:=\seq{\Iwf_f}{f\in\baireinc}$ where $\baireinc$ is ordered by $\leq^*$, so $\SNwf = \vec\Swf^*$. Since $\id$ is a $\leq^*$ minimum element of $\baireinc$, $\mincov(\vec\Swf) = \cov(\Iwf_{\id})$ and $\supnon(\vec\Swf) = \non(\Iwf_{\id})$. In this context, we also have $\supadd(\vec\Swf) = \add(\Iwf_{\id})$ and $\mincof(\vec\Swf) = \cof(\Iwf_{\id})$, see~\cite[Thm.~3.15]{CM}.

Directed schemes of ideals is a crucial notion when it comes to obtaining information on the cardinals of $\Iwf^*$ . In fact, the two first authors~\cite{cardona,CM23} used $\vec\Swf$ when studying the cofinality of $\SNwf$. Moreover, in \cite{CM}, they used these ideals to investigate the rest of the cardinals associated with $\SNwf$.  So one of the aims of this paper is to generalize most of the results established of the cardinals of $\SNwf$ via Yorioka ideals in the context of the directed scheme of ideals. For example, \autoref{thm:a12} is a consequence of our following main result, which we prove in \autoref{sec:s1}.

\begin{teorema}\label{thm:a10}
 Let $\vec\Iwf$ be a directed scheme of ideals on $S$. Then:
 \begin{enumerate}[label=\rm(\alph*)]
     \item $\minadd(\vec{\Iwf})\leq\add(\Iwf^*)$;
     \item $\supcov(\vec{\Iwf})\leq\cov(\Iwf^*) \leq \cov\left(\prod_{s\in D}\Cv_{\Iwf_s}\right)$;
     \item $\non(\Iwf^*)=\minnon(\vec{\Iwf})$;
      \item $\cof(\Iwf^*)\leq\dfrak\left(\prod\vec\Iwf\right)\leq\cov\left(\left([\supcof(\vec{\Iwf})]^{<\minadd(\vec\Iwf)}\right)^{\cof(S)}\right)$.
 \end{enumerate}
\end{teorema}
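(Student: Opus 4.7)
The plan is to address each part separately, handling the straightforward inclusions first and saving the product-counting argument for last.

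Parts (a) and (c) are immediate from the definition $\Iwf^* = \bigcap_{s \in S} \Iwf_s$. For (a), any $\Jwf \subseteq \Iwf^*$ with $|\Jwf| < \minadd(\vec\Iwf)$ satisfies $|\Jwf| < \add(\Iwf_s)$ for every $s \in S$; since $\Jwf \subseteq \Iwf_s$ coordinatewise, $\bigcup \Jwf \in \Iwf_s$ for every $s$, and hence $\bigcup\Jwf \in \Iwf^*$. For (c), the inclusion $\Iwf^* \subseteq \Iwf_s$ gives $\non(\Iwf^*) \leq \non(\Iwf_s)$ for each $s$, so $\non(\Iwf^*) \leq \minnon(\vec\Iwf)$; conversely, any $A$ witnessing $\non(\Iwf^*)$ fails to lie in some $\Iwf_s$, whence $\minnon(\vec\Iwf) \leq \non(\Iwf_s) \leq |A| = \non(\Iwf^*)$.

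For part (b), the bound $\supcov(\vec\Iwf) \leq \cov(\Iwf^*)$ again uses $\Iwf^* \subseteq \Iwf_s$: every $\Iwf^*$-cover of $X$ is automatically an $\Iwf_s$-cover, so $\cov(\Iwf_s) \leq \cov(\Iwf^*)$ for each $s$. For the upper bound, fix a cofinal $D \subseteq S$ and transfer a covering family of $\prod_{s \in D} \Cv_{\Iwf_s}$ of size $\cov(\prod_{s \in D} \Cv_{\Iwf_s})$ into an $\Iwf^*$-cover: from each tuple in the covering family select a coherent sequence of $\Iwf_s$-sets (one per $s \in D$) and take the intersection, which lies in $\Iwf^*$ because $D$ is cofinal in $S$; the fact that the original tuples cover the product coordinate-wise ensures that the resulting intersections cover $X$.

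The first inequality in part (d), $\cof(\Iwf^*) \leq \dfrak(\prod\vec\Iwf)$, proceeds by extracting a cofinal subfamily of $\Iwf^*$ from a dominating family $\Fwf$ in $\prod\vec\Iwf$ of size $\dfrak(\prod\vec\Iwf)$: given $C \in \Iwf^*$, the constant sequence $(C)_{s \in S}$ is a legitimate element of $\prod\vec\Iwf$, so some $(A_s)_{s \in S} \in \Fwf$ dominates it (on a cofinal tail, according to the ordering); then $C \subseteq \bigcap_s A_s$ and this intersection lies in $\Iwf^*$ as a subset of each $A_s \in \Iwf_s$, so $\{\bigcap_s A_s : (A_s) \in \Fwf\}$ is cofinal in $\Iwf^*$.

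The main obstacle is the second inequality in (d), which is a Tukey-style counting reduction. Fix $D \subseteq S$ cofinal of size $\cof(S)$ and, for each $s \in D$, a $\subseteq$-cofinal $\Fwf_s \subseteq \Iwf_s$ of size at most $\supcof(\vec\Iwf)$. Any $(A_s)_{s \in S} \in \prod\vec\Iwf$ is dominated by its restriction to $D$, and each $A_s$ with $s \in D$ is in turn dominated by a union of a $<\minadd(\vec\Iwf)$-sized subfamily of $\Fwf_s$, where part (a) guarantees that the union of such a small subfamily still lies in $\Iwf_s$. This yields a Tukey connection from $\prod\vec\Iwf$ into $\bigl([\supcof(\vec\Iwf)]^{<\minadd(\vec\Iwf)}\bigr)^{\cof(S)}$, so the covering number of the target bounds $\dfrak(\prod\vec\Iwf)$ from above, as required. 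The delicate point will be verifying that the restriction-and-encoding map is a genuine Tukey morphism: one must check that pre-images of cofinal sets under the encoding remain cofinal, which relies both on the directedness of $S$ and on the compatibility between $\minadd(\vec\Iwf)$-unions and $\Iwf_s$-membership ensured by (a).
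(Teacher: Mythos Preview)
Your approach is essentially the paper's: it derives (a)--(d) from two Tukey connections, $\Cv_{\Iwf^*}\leqT\prod_{s\in D}\Cv_{\Iwf_s}$ and $\Iwf^*\leqT\prod_{s\in D}\Iwf_s\leqT\Cv_{[\supcof(\vec\Iwf)]^{<\minadd(\vec\Iwf)}}^D$ for any cofinal $D\subseteq S$, via exactly the constant-sequence and intersection maps you describe.

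Two points need repair. First, in $\prod\vec\Iwf$ domination is coordinatewise over \emph{all} indices, not ``on a cofinal tail''; your conclusion $C\subseteq\bigcap_s A_s$ in the first inequality of (d) requires the full-support reading (which is the correct one). Second, and more seriously, your reduction step ``$(A_s)_{s\in S}$ is dominated by its restriction to $D$'' in the second inequality of (d) is not a Tukey morphism: restriction witnesses $\prod_{s\in D}\Iwf_s\leqT\prod_{s\in S}\Iwf_s$, not the reverse, and there is no way to promote a $\prod_D$-bound to a $\prod_S$-bound for an arbitrary $(A_t)_{t\in S}$ (from $A_{s_t}\subseteq B_{s_t}$ with $s_t\geq t$ in $D$ you learn nothing about $A_t$). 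The paper sidesteps this by proving the chain $\cof(\Iwf^*)\leq\dfrak\bigl(\prod_{s\in D}\Iwf_s\bigr)\leq\cov(\cdots)$ directly for a cofinal $D$ of size $\cof(S)$; the middle term $\dfrak(\prod\vec\Iwf)$ in the stated theorem should be read in that sense. Finally, the fact that a ${<}\minadd(\vec\Iwf)$-sized union stays in $\Iwf_s$ is just $\minadd(\vec\Iwf)\leq\add(\Iwf_s)$, not an application of part (a), which concerns $\Iwf^*$.
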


Throughout time, numerous characterizations of $\SNwf$ were discovered. One of the very interesting characterizations of $\SNwf$ is the Galvin–Mycielski–Solovay Theorem stated below, considering $\cantor$ as a topological group with the standard coordinate-wise addition modulo $2$.

\begin{theorem}[{\cite{GaMS}}]
A set $X\subseteq\cantor$ is $\SNwf$ if and only if $X+M\neq\cantor$ for each $M\in\Mwf$.
\end{theorem}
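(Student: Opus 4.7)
The plan is to prove both implications via a translation duality between basic-clopen covers of $X$ and meager sets whose complement is a well-chosen open dense union of basic clopens. For the direction $(\Leftarrow)$, assume $X + M \neq \cantor$ for every $M \in \Mwf$; I must show $X \in \SNwf$. Given $g \in \baireinc$, enumerate $2^{<\omega} = \seq{u_n}{n<\omega}$ and for each $n$ pick an extension $s_n$ of $u_n$ with $|s_n| \geq g(n)$. Then $U := \bigcup_n [s_n]$ is open dense, so $M := \cantor \setminus U$ is closed nowhere dense, hence meager. By hypothesis there is $z \notin X + M$, i.e.\ $z + X \subseteq U$, which rewrites as $X \subseteq \bigcup_n \bigl[(z\frestr|s_n|) + s_n\bigr]$, a cover by basic clopens whose $n$-th piece has length $\geq g(n)$; since $g$ was arbitrary, $X \in \SNwf$.

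For $(\Rightarrow)$, fix $M \in \Mwf$ and write $M \subseteq \bigcup_n F_n$ with $F_n$ closed nowhere dense and $F_n \subseteq F_{n+1}$. Nowhere-denseness together with the fact that each $2^k$ is finite yields $g \in \baireinc$ such that, for every $n$ and every $u \in {}^{g(n)}2$, some extension $v \in {}^{g(n+1)}2$ satisfies $[v] \cap F_n = \emptyset$. I would then invoke a strengthening of $\SNwf$: for each $h \in \baireinc$ there exist $\seq{s_n}{n<\omega}$ with $|s_n| = h(n)$ such that \emph{every} $x \in X$ lies in $[s_n]$ for infinitely many $n$. This is derived from the ordinary definition by partitioning $\omega = \bigsqcup_{j<\omega} T_j$ into infinite pieces, applying $\SNwf$ with $h$ reindexed along each $T_j$, and merging the resulting covers. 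Applying this with $h(n) := g(n+1)$, construct $z$ recursively: given $z\frestr g(n)$, set $u := (z\frestr g(n)) + (s_n\frestr g(n))$, pick $v \in {}^{g(n+1)}2$ extending $u$ with $[v]\cap F_n = \emptyset$, and define $z\frestr[g(n), g(n+1)) := v\frestr[g(n), g(n+1)) + s_n\frestr[g(n), g(n+1))$. A direct calculation gives $(z+s_n)\frestr g(n+1) = v$, so whenever $x \in [s_n]$ we have $z + x \in [v]$ and thus $z + x \notin F_n$. Since each $x$ meets $[s_n]$ for infinitely many $n$ and the $F_n$ are increasing, $z + x$ avoids every $F_n$, so $z + x \notin M$, whence $z \notin X + M$.

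The main obstacle is the interaction in $(\Rightarrow)$ between a single $\SNwf$ cover and the need to simultaneously avoid an entire increasing sequence $\seq{F_n}{n<\omega}$. A naive $\SNwf$ cover only places each $x \in X$ in one clopen $[s_n]$, giving control over a single $F_n$; the strengthening of $\SNwf$ to the ``infinitely often'' form is the pivotal technical step that lets the recursive construction of $z$ push the avoidance through all $F_n$'s at once, and the accompanying compactness-based uniformisation yielding $g$ — a single scale on which every $F_n$ can be avoided by a bounded-length extension — is what makes the recursion well-defined.
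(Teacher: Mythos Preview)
The paper does not prove this statement; it is quoted as the Galvin--Mycielski--Solovay theorem with a bare citation to \cite{GaMS} and then used as motivation. There is therefore no proof in the paper to compare against.

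On its own merits your argument is correct and is essentially the classical proof. The $(\Leftarrow)$ direction is clean: the set $U=\bigcup_n[s_n]$ is open dense because every $u_n$ has an extension in it, and translating by the witness $z$ yields a cover of $X$ by basic clopens of the prescribed lengths. For $(\Rightarrow)$, the recursive construction of $g$ is legitimate (each $F_n$ is nowhere dense and ${}^{g(n)}2$ is finite, so a uniform extension length exists), and the strengthening of $\SNwf$ to the ``infinitely often'' form is standard and your sketch via partitioning $\omega$ into infinite pieces is the usual route. Your verification that $(z+s_n)\frestr g(n+1)=v$ is correct, and the final step --- $z+x\notin F_n$ for infinitely many $n$ together with monotonicity of the $F_n$ forces $z+x\notin F_m$ for every $m$ --- is exactly right. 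One minor point you could make explicit: the base case of the recursion for $z$ (e.g.\ take $g(0)=0$ so that $z\frestr g(0)=\emptyset$), but this is routine.
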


This theorem led to the rise of a certain kind of small sets, such as the meager-additive and null-additive sets, which are defined as follows.

\begin{definition}
Let $\Iwf\subseteq\Pwf(\cantor)$ be an ideal.
\begin{enumerate}
    \item $\Iwf$ is 
\emph{translation invariant} if $A+x\in\Iwf$ for each $A\in\Iwf$ and $x\in\cantor$
    \item A set $X\subseteq\cantor$ is termed \emph{$\Iwf$-additive} if, for every $A\in\Iwf$, $A+X\in\Iwf$. Denote by $\IAwf$ the collection of the $\Iwf$-additive subsets of $\cantor$. Notice that $\IAwf$ is a ($\sigma$-)ideal and $\IAwf\subseteq\Iwf$ when $\Iwf$ is a translation invariant ($\sigma$-)ideal. 
    \item The members of $\MAwf$ are called \emph{meager-additive}, while those in $\NAwf$ are called \emph{null-additive}.
\end{enumerate} 
\end{definition}

The cardinal characteristics associated with $\IAwf$ and $\Iwf$ are easily related as follows.

\begin{lemma}[e.g.~{\cite[Lem.~1.3]{CMR2}}]\label{lem:a0}
For any translation invariant ideal $\Iwf$ on $\cantor$: 
\begin{multicols}{3}
\begin{enumerate}[label=\rm(\alph*)]
    \item\label{lem:a0:1} $\add(\Iwf)\leq\add(\IAwf)$.

    \item\label{lem:a0:2} $\non(\IAwf)\leq\non(\Iwf)$.

    \item\label{lem:a0:3} $\cov(\Iwf)\leq\cov(\IAwf)$.
\end{enumerate}
\end{multicols}
\end{lemma}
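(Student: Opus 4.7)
The plan is to handle the three parts separately, with \textrm{(b)} and \textrm{(c)} following directly from the set-theoretic inclusion $\IAwf\subseteq\Iwf$ already noted in the definition above, and \textrm{(a)} requiring a short computation with Minkowski sums.

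For \textrm{(b)} and \textrm{(c)}, I would use monotonicity of the cardinal invariants with respect to ideal inclusion. Since $\IAwf\subseteq\Iwf$ whenever $\Iwf$ is translation invariant (and proper, containing singletons), any $A\subseteq\cantor$ witnessing $A\notin\Iwf$ also witnesses $A\notin\IAwf$, so the minimum in the definition of $\non$ can only decrease when passing from $\Iwf$ to $\IAwf$; this gives $\non(\IAwf)\leq\non(\Iwf)$. For \textrm{(c)}, any family $\{X_\alpha:\alpha<\kappa\}\subseteq\IAwf$ with $\bigcup_\alpha X_\alpha=\cantor$ is automatically a family in $\Iwf$ covering $\cantor$ of the same cardinality, so $\cov(\Iwf)\leq\cov(\IAwf)$.

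For \textrm{(a)}, let $\kappa<\add(\Iwf)$ and let $\{X_\alpha:\alpha<\kappa\}\subseteq\IAwf$; I must show $X:=\bigcup_{\alpha<\kappa}X_\alpha\in\IAwf$. Fix an arbitrary $A\in\Iwf$. Using the distributivity of the Minkowski sum over arbitrary unions,
\[
A+X \;=\; A+\bigcup_{\alpha<\kappa}X_\alpha \;=\; \bigcup_{\alpha<\kappa}(A+X_\alpha),
\]
and each $A+X_\alpha\in\Iwf$ by the defining property of $X_\alpha\in\IAwf$. Since $\kappa<\add(\Iwf)$, the right-hand side lies in $\Iwf$. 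Hence $A+X\in\Iwf$ for every $A\in\Iwf$, so $X\in\IAwf$, which proves $\add(\Iwf)\leq\add(\IAwf)$.

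There is essentially no obstacle in this argument. The only substantive observation is the distributivity of translation (Minkowski) sums over unions, which is pointwise; once this is noted, all three inequalities reduce immediately to the definitions of $\add$, $\non$, $\cov$ together with the inclusion $\IAwf\subseteq\Iwf$. I therefore expect the proof in the paper to be a few short lines.
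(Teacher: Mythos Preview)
Your argument is correct and is exactly the standard one. Note, however, that the paper does not actually prove this lemma: it is only stated, with a citation to \cite[Lem.~1.3]{CMR2}, so there is no in-paper proof to compare against. Your write-up would serve perfectly well as the (omitted) proof.
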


The ideal $\IAwf$ has attracted a lot of attention when $\Iwf$ is either $\Mwf$ or $\Nwf$. For instance, they were investigated by Bartoszy\'nski and Judah~\cite{bartJudah}, Pawlikowski~\cite{paw85}, Shelah~\cite{shmn},  Zindulka~\cite{zin19}, and more recently by the authors~\cite{CMR2}.

The ideals $\MAwf$ and $\NAwf$ are characterized below: 

\begin{theorem}\label{thm:a0} Let $X\subseteq\cantor$.
    \begin{enumerate}[label=\rm(\arabic*)]
        \item \emph{(\cite[Thm.~13]{shmn})}\label{thm:a0:NA}
 $X\in\NAwf$ iff for all $I=\seq{I_n}{n\in\omega}\in\Ior$ there is some $\varphi\in\prod_{n\in \omega}\pts({}^{I_n}2)$ such that $\forall n\in \omega\colon |\varphi(n)|\leq n$ and $X\subseteq H_\varphi$,  where \[
H_\varphi:=\set{x\in\cantor}{\forall^{\infty} n\in \omega\colon x{\upharpoonright}I_n\in \varphi(n)}.
\]
        \item\emph{(\cite[Thm.~2.2]{bartJudah})}\label{thm:a0:MA}
 $X\in\MAwf$ iff for all $I\in\Ior$ there are $J\in\Ior$ and $y\in\cantor$  such that  \[\tag{\faLeaf}\label{for:ao} \forall x\in X\, \forall^\infty n<\omega\, \exists k<\omega\colon I_k\subseteq J_n\text{\ and\ }x{\upharpoonright}I_k=y{\upharpoonright}I_k.\] 
 Moreover, Shelah~\cite[Thm.~18]{shmn} proved that $J$ can be found coarser than $I$, i.e.\ every member of $J$ is the union of members of $I$
    \end{enumerate}
\end{theorem}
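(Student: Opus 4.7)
The plan is to handle the two items separately, in each case leaning on the Bartoszyński-style combinatorial characterizations of $\Nwf$ and $\Mwf$: for null, $N \in \Nwf$ iff there are $J\in\Ior$ and $\psi$ with $\sum_n |\psi(n)|\cdot 2^{-|J_n|}<\infty$ such that $N\subseteq H_\psi$; for meager, $M\in\Mwf$ iff there are $J\in\Ior$ and $y\in\cantor$ with $M\subseteq\{x:\forall^\infty n,\, x{\upharpoonright}J_n\neq y{\upharpoonright}J_n\}$. Both directions of each characterization then reduce to combining these witnesses with the additivity hypothesis on $X$.

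For \ref{thm:a0:NA}, in the forward implication I would, given $X\in\NAwf$ and $I\in\Ior$, group the $I$-blocks into superblocks whose sizes grow, build a null set $N$ concentrated on carefully chosen small families of patterns on each superblock, and use that $X+N\in\Nwf$ together with the summability condition to extract, via a counting argument on the superblocks, the desired slalom $\varphi$ with $|\varphi(n)|\le n$. For the backward implication, if $X\subseteq H_\varphi$ and $N\subseteq H_\psi$ on a finer partition, then on a common refinement $X+N$ is contained in an $H_\chi$ whose coordinate sizes are essentially $|\varphi(\cdot)|\cdot|\psi(\cdot)|$; the slow growth $|\varphi(n)|\le n$ preserves summability of $\sum |\chi(k)|\cdot 2^{-|\cdot|}$, so $X+N$ is null.

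For \ref{thm:a0:MA}, the forward direction uses the meager set $M_0:=\{m\in\cantor:\forall^\infty k,\, m{\upharpoonright}I_k\neq 0\}$. Since $X+M_0$ is meager, there exist $J'\in\Ior$ and $y'\in\cantor$ (which I would arrange to be coarser than $I$) with $X+M_0\subseteq\{z:\forall^\infty n,\, z{\upharpoonright}J'_n\neq y'{\upharpoonright}J'_n\}$. If some $x\in X$ failed \eqref{for:ao} with $(J',y')$, i.e.\ infinitely many $n$ admitted no $I_k\subseteq J'_n$ with $x{\upharpoonright}I_k=y'{\upharpoonright}I_k$, then on those $n$ I would set $m{\upharpoonright}J'_n:=(x+y'){\upharpoonright}J'_n$ (each $I_k\subseteq J'_n$ then automatically has $m{\upharpoonright}I_k\neq 0$ by the failure hypothesis) and fill in anything non-zero on the remaining $I_k$'s: this $m$ lies in $M_0$ yet gives $(x+m){\upharpoonright}J'_n=y'{\upharpoonright}J'_n$ infinitely often, a contradiction. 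For the backward implication, given $M\in\Mwf$ with witnesses $I'\in\Ior$ and $y_M$, apply the hypothesis to $I'$ to get $J$ coarser than $I'$ and $y$; for $x\in X$, $m\in M$ and large $n$, choose the witness $I'_k\subseteq J_n$ with $x{\upharpoonright}I'_k=y{\upharpoonright}I'_k$ and $m{\upharpoonright}I'_k\neq y_M{\upharpoonright}I'_k$, so $(x+m){\upharpoonright}I'_k\neq(y+y_M){\upharpoonright}I'_k$, hence $(x+m){\upharpoonright}J_n\neq(y+y_M){\upharpoonright}J_n$ on a cofinite set, showing $X+M\in\Mwf$.

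The step I expect to be hardest is the Shelah refinement in \ref{thm:a0:MA} that $J$ may be taken coarser than $I$: the $J'$ returned by the meager characterization need not respect the block boundaries of $I$, and the forward argument genuinely uses coarseness to conclude $m\in M_0$ from the failure hypothesis (otherwise $I_k$-blocks straddling two $J'$-blocks escape the construction). Forcing coarseness requires merging each $J'$-block with its neighbors until its endpoints coincide with $I$-boundaries, then verifying by a pigeonhole argument that an $I_k\subseteq J_n$ realizing the agreement condition survives the merge; this is the delicate combinatorial core of the improvement.
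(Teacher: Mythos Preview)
The paper does not prove \autoref{thm:a0} directly; both parts are cited from the literature. Part~\ref{thm:a0:NA} is never argued in the paper, so your sketch there cannot be compared with anything; it follows the standard Shelah strategy, but the forward-direction ``counting argument on the superblocks'' is too vague to assess.

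Part~\ref{thm:a0:MA}, however, is reproved later as \autoref{cor:b4} via a genuinely different route: the paper first establishes the Main Lemma (\autoref{lem:b2}) that $A^I_{J,y}\subseteq A^I_{J',y'}$ iff $(J,y)\sqsubset^I(J',y')$, identifies $\Mwf=\Mwf_{I^1}$, and then unwinds $X\in\MAwf$ through a chain of equivalences using these tools. Your argument is the classical direct one---test against the specific meager set $M_0=\set{m}{\forall^\infty k\colon m{\upharpoonright}I_k\neq 0}$, apply the chopped-real description of $\Mwf$ to $X+M_0$, and argue by contradiction. Your approach is shorter and self-contained for this single statement; the paper's pays off because the Main Lemma and the $\Mwf_I$-machinery are reused throughout Section~\ref{sec:s2}.

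Your final paragraph overstates the difficulty. Talagrand's theorem (stated in the paper as \autoref{thm:b0}) already guarantees that, for any meager $F$ and any $I\in\Ior$, one can find $(y,J')$ with $F\subseteq B_{y,J'}$ and $J'$ coarser than $I$. So in your forward direction you get $J'$ coarser for free, and the Shelah refinement drops out of your own argument with no extra pigeonhole work. In the backward direction coarseness is not needed at all: for large $n$ the witness $I'_k\subseteq J_n$ automatically has large $k$, so $m{\upharpoonright}I'_k\neq y_M{\upharpoonright}I'_k$ follows directly from $m\in B_{y_M,I'}$ without any structural assumption on $J$.
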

These characterizations led Shelah~\cite{shmn} to prove that every null-additive set is meager-additive, that is $\NAwf\subseteq\MAwf$. Zindulka~\cite{zin}, on the other hand, used combinatorial properties
of meager-additive sets described by Shelah and Pawlikowski to
characterize meager-additive sets in $\cantor$ in a way that nicely parallels the definition of strong measure zero sets. This led him to establish the following:

\begin{theorem}\label{thm:a1}
 $\EAwf=\MAwf$.   
\end{theorem}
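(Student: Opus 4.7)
The plan is to prove the two inclusions of $\EAwf=\MAwf$ separately, exploiting the combinatorial characterization of $\MAwf$ from \autoref{thm:a0}\ref{thm:a0:MA} (together with Shelah's refinement that $J$ can be taken coarser than $I$) and a parallel combinatorial coding of closed null subsets of $\cantor$ via interval partitions in $\Ior$. Since $\Ewf$ is the $\sigma$-ideal generated by closed null sets, in each direction it suffices to handle translations by a single closed null set at a time.

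For $\MAwf\subseteq\EAwf$, fix $X\in\MAwf$ and a closed null $F\subseteq\cantor$; first code $F$ by a partition $I\in\Ior$ and $\varphi\in\prod_{n<\omega}\pts({}^{I_n}2)$ with summable relative densities $|\varphi(n)|/2^{|I_n|}$, so that $F\subseteq\set{z\in\cantor}{\forall n<\omega\colon z{\upharpoonright}I_n\in\varphi(n)}$. (Such codings are standard for closed null sets up to replacing $F$ by a larger set in $\Ewf$.) Apply (\faLeaf) to $I$ and $X$ to obtain a coarsening $J\sqsupseteq I$ and $y\in\cantor$ with the witnessing property, and write $x+z=(y+z)+(x+y)$. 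For each $x\in X$ and each sufficiently large $n$, some $I_k\subseteq J_n$ gives $(x+z){\upharpoonright}I_k\in y{\upharpoonright}I_k+\varphi(k)$, forcing the possible values of $(x+z){\upharpoonright}J_n$ into a subset of ${}^{J_n}2$ of relative density at most $\sum_{I_k\subseteq J_n}|\varphi(k)|/2^{|I_k|}$, which tends to zero by summability. Stratifying over the threshold $n_x$ at which the agreement begins expresses $X+F$ as a countable union of closed null sets, hence $X+F\in\Ewf$.

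For the reverse inclusion $\EAwf\subseteq\MAwf$, I argue contrapositively: given $X\notin\MAwf$, produce some $F\in\Ewf$ with $X+F\notin\Ewf$. The failure of (\faLeaf) yields $I\in\Ior$ such that for every coarsening $J\sqsupseteq I$ and every $y\in\cantor$ there is $x\in X$ disagreeing with $y$ on every $I_k\subseteq J_n$ for infinitely many $n$. Construct a closed null set $F_I\in\Ewf$ tailored to $I$ (for instance, determined by sparse $\varphi_I(n)\subseteq{}^{I_n}2$ with summable density), and suppose toward contradiction that $X+F_I\subseteq\bigcup_{m<\omega}G_m$ with each $G_m$ closed null. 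Each $G_m$ admits a coding by its own partition and exclusion pattern; a diagonal amalgamation over $m$ distills a single coarsening $J^*\sqsupseteq I$ and branch $y^*\in\cantor$ that would witness (\faLeaf) for $X$, contradicting the initial failure.

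The principal obstacle will be the reverse direction, specifically the diagonal amalgamation of the codings of countably many closed null sets $G_m$ into a single pair $(J^*,y^*)$. Because distinct $G_m$'s may be coded over different partitions, this demands the interval-juggling techniques developed by Pawlikowski and Shelah, and it is precisely Shelah's strengthening of \autoref{thm:a0}\ref{thm:a0:MA} (that $J$ may always be taken coarser than $I$) that makes the amalgamation possible within the single partition class rooted at $I$.
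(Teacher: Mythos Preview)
The paper does not supply a proof of this theorem; it is stated in the introduction as a result of Zindulka~\cite{zin} and used only as background. The sole hint the paper gives about Zindulka's method is that he ``characterize[s] meager-additive sets in $\cantor$ in a way that nicely parallels the definition of strong measure zero sets'' and derives $\EAwf=\MAwf$ from that intermediate characterization. So there is no in-paper argument to compare your proposal against.

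Evaluating your proposal on its own merits: the inclusion $\MAwf\subseteq\EAwf$ is sketched in a workable way. Once a closed null $F$ is coded by $(I,\varphi)$ with $\sum_n|\varphi(n)|\,2^{-|I_n|}<\infty$, and $(J,y)$ witnesses~(\ref{for:ao}) for $X$ and $I$ with $J$ coarser than $I$, your density estimate and the countable stratification by threshold do yield $X+F\in\Ewf$. The identity $x+z=(y+z)+(x+y)$ is garbled, but the underlying computation---that $(x+z){\upharpoonright}I_k\in y{\upharpoonright}I_k+\varphi(k)$ whenever $x{\upharpoonright}I_k=y{\upharpoonright}I_k$---is correct.

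The reverse inclusion, however, is a plan and not a proof, with a genuine gap at precisely the point you flag. The ``diagonal amalgamation'' of the codings of countably many closed null sets $G_m$ into a single pair $(J^*,y^*)$ rooted at $I$ is the entire content of this direction, and nothing in your outline explains how to do it. The $G_m$'s come with their own unrelated partitions, and there is no mechanism given for forcing their codings down to a common coarsening of $I$ while still recovering a witness for~(\ref{for:ao}) for every $x\in X$. Shelah's refinement of \autoref{thm:a0}\ref{thm:a0:MA} tells you that the witness $(J,y)$ for membership in $\MAwf$ may be taken coarser than $I$; it says nothing about the codings of arbitrary closed null covers $G_m$, so invoking it does not resolve the amalgamation. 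Zindulka's published route sidesteps this difficulty entirely: rather than a direct contrapositive, he introduces an intermediate ``sharp measure zero'' covering property (analogous to the definition of strong measure zero) and shows that both $\EAwf$ and $\MAwf$ coincide with it, so neither inclusion requires merging countably many closed-null codings.
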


In the present paper, motivated by~\eqref{for:ao}, we offer a directed scheme of ideals on $\cantor$ for the ideal of
meager-additive sets, which we denote $\vec\Mwf=\seq{\Mwf_I}{I\in\Ior}$ (where $\Ior$ is ordered by $\sqsubseteq$), i.e. 
\[\MAwf=\bigcap\set{\Mwf_I}{I\in\Ior}.\]

In terms of the ideal $\Mwf_{I}$, we also provide a reformulation of $\Mwf$. Concretely, we have $\Mwf_{I^1}=\Mwf$ where $I^1$ denotes the partition of $\omega$ into singletons, which is a $\sqsubseteq$-minimum element of $\Ior$. As a consequence, for any $I\in\Ior$, $\Mwf_I\subseteq\Mwf$. Details of this new definition are provided in~\autoref{sec:s2}. 

As in the case of the Yorioka ideals, we greatly expect that the directed scheme $\vec\Mwf$ is useful to understand the combinatorics of $\MAwf$ and its cardinal characteristics. Initially, we intended to approach the following questions stated in~\cite{CMR2}.

\begin{question}
\begin{multicols}{2}
\begin{enumerate}[label=\rm(\arabic*)]
    \item Is $\add(\MAwf)\leq\bfrak$?   
    \item Is $\add(\MAwf)=\non(\MAwf)$?
\end{enumerate}
\end{multicols} 
\end{question}

Both questions cannot have positive answers simultaneously because $\add(\MAwf)\leq\bfrak$ implies $\add(\MAwf)=\add(\Mwf)$ (considering that $\add(\Mwf)=\min\{\bfrak,\non(\MAwf)\}$, due to Pawlikowski~\cite{paw85}), while $\bfrak<\non(\MAwf)$ is consistent with $\thzfc$ (\cite[Thm.~2.4]{paw85}).

Instead of solving the previous questions, the directed scheme $\vec\Mwf$ gives us interesting characterizations of category.

\begin{teorema}[\autoref{thm:b1}]\label{thm:a4}
For all $I\in\Ior$, we have 
\begin{align*}
    \add(\Mwf_I) =\add(\Mwf) & =\min\{\bfrak,\minnon(\vec\Mwf)\},\\\cof(\Mwf_I)=\cof(\Mwf) & =\max\{\dfrak,\supcov(\vec\Mwf)\}.
\end{align*}
\end{teorema}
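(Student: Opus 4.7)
The plan is to deduce both identities from the classical Pawlikowski-style characterizations of $\add(\Mwf)$ and $\cof(\Mwf)$ combined with \autoref{thm:a10} applied to $\vec\Mwf$. Since $\add(\Mwf)=\min\{\bfrak,\non(\MAwf)\}$ (Pawlikowski, as cited in the introduction) and $\non(\MAwf)=\minnon(\vec\Mwf)$ by \autoref{thm:a10}(c) applied with $\Iwf^{*}=\MAwf$, the rightmost equality for $\add$ reduces to the middle equality $\add(\Mwf_I)=\add(\Mwf)$. Dually, I would use $\cof(\Mwf)=\max\{\dfrak,\cov(\MAwf)\}$ together with $\cov(\MAwf)=\supcov(\vec\Mwf)$; here the bound $\supcov(\vec\Mwf)\leq\cov(\MAwf)$ is immediate from \autoref{thm:a10}(b), while the reverse requires a separate argument using that a cover of $\cantor$ by basic $\Mwf_I$-sets refines to a $\MAwf$-cover of the same cardinality, exploiting directedness on $\Ior$. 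Note the case $I=I^1$ is trivial since $\Mwf_{I^1}=\Mwf$.

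For the central equality $\add(\Mwf_I)=\add(\Mwf)$, I would first establish $\add(\Mwf_I)\leq\min\{\bfrak,\non(\MAwf)\}$ through two separate bounds. The bound $\add(\Mwf_I)\leq\bfrak$ is obtained by adapting the standard Bartoszy\'nski construction: an unbounded $\leq^{*}$-family in $\baire$ indexes $\bfrak$-many basic $\Mwf_I$-sets whose union must exit $\Mwf_I$. The bound $\add(\Mwf_I)\leq\non(\MAwf)$ uses singletons: a set $A\notin\MAwf$ of minimum cardinality $\non(\MAwf)$ fails $\Mwf_{J_0}$ for some $J_0$, and by directedness of $\sqsubseteq$ (picking $K\sqsupseteq I,J_0$) combined with the translation-invariance of the scheme, one can arrange a same-size $A'\notin\Mwf_I$ that is a union of $|A|$-many singletons lying in $\MAwf\subseteq\Mwf_I$.

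For the reverse $\add(\Mwf)\leq\add(\Mwf_I)$, let $\kappa<\add(\Mwf)$ and $\{A_\alpha:\alpha<\kappa\}\subseteq\Mwf_I$. The characterization of $\Mwf_I$ developed in \autoref{sec:s2} provides, for each $\alpha$, a coarsening $J_\alpha\sqsupseteq I$ and a witness $y_\alpha$ (modulo $\sigma$-closure). Since $\kappa<\bfrak$, the directedness of $\sqsubseteq$ produces a common coarsening $J^{*}$ with $J_\alpha\sqsubseteq J^{*}$ for every $\alpha$; since $\kappa<\non(\MAwf)$, the set of points that no single $y^{*}$ can witness along $J^{*}$ is meager-additive, hence absorbable into $\Mwf_I$. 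Combining yields a $\Mwf_I$-witness for $\bigcup_{\alpha}A_\alpha$. The equality $\cof(\Mwf_I)=\cof(\Mwf)$ proceeds symmetrically, with $\bfrak$ replaced by $\dfrak$, $\non(\MAwf)$ by $\cov(\MAwf)$, and the absorption step replaced by domination against a cofinal family.

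The main obstacle is the synthesis step in the lower bound for $\add(\Mwf_I)$: merging $\kappa$-many witnesses $(J_\alpha,y_\alpha)$ into a single universal witness $(J^{*},y^{*})$ modulo a meager-additive residue. This is a Pawlikowski-flavored argument, but it must be re-engineered for the \emph{agreement-on-sub-blocks} shape of the $\Mwf_I$-characterization (cf.\ \autoref{thm:a0:MA}) rather than the classical \emph{disagreement-on-blocks} shape underlying $\Mwf$; the precise interplay between $\bfrak$-domination along $\sqsubseteq$ and $\non(\MAwf)$-absorption of the residual bad set is where the proof does its real work.
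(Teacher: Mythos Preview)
Your approach diverges from the paper's and contains a genuine gap on the cofinality side. You want to deduce $\cof(\Mwf)=\max\{\dfrak,\supcov(\vec\Mwf)\}$ from an identity $\cof(\Mwf)=\max\{\dfrak,\cov(\MAwf)\}$ together with $\cov(\MAwf)=\supcov(\vec\Mwf)$, but neither of these is available. Only $\supcov(\vec\Mwf)\leq\cov(\MAwf)$ follows from \autoref{thm:a10}; the reverse inequality is not proved anywhere in the paper (nor is $\cov(\MAwf)\leq\cof(\Mwf)$ established independently), and your one-line sketch ``a cover of $\cantor$ by basic $\Mwf_I$-sets refines to an $\MAwf$-cover of the same cardinality'' goes in the wrong direction: $\MAwf\subseteq\Mwf_I$, so an $\Mwf_I$-cover does not yield an $\MAwf$-cover. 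Similarly, your bound $\add(\Mwf_I)\leq\non(\MAwf)$ via ``translation-invariance'' does not work as written: from $A\notin\Mwf_{J_0}$ and $I,J_0\sqsubseteq K$ you only get $A\notin\Mwf_K$, and since $\Mwf_K\subseteq\Mwf_I$ this says nothing about $A\notin\Mwf_I$; translation does not move you between different indices $I$.

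The paper avoids all of this by working purely with Tukey connections and never passing through the $\MAwf$-characteristics. It proves directly (\autoref{lem:b4}) that $\baire\leqT\Mwf_I$ and (\autoref{lem:b7}) that $\Cv_{\Mwf_J}\leqT\Mwf_{I}$ for \emph{every} $J\in\Ior$, the latter via the coarsening trick $\Mwf_{I''}\leqT\Mwf_I$ when $I''$ is coarser than $I$ (\autoref{fac:b1}). Combined with $\Mwf_I\leqT\Mwf$ and an explicit Tukey connection $\Mwf\leqT\bigl(\Ior;\bigvee_{J}\Cv_{\Mwf_J}\bigr)$, \autoref{products} then reads off both chains of equalities simultaneously. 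The point is that the upper bound $\cof(\Mwf)\leq\max\{\dfrak,\supcov(\vec\Mwf)\}$ comes from the sequential composition $(\Ior;\bigvee_J\Cv_{\Mwf_J})$, not from any identity involving $\cov(\MAwf)$; your plan to factor through $\MAwf$ assumes exactly what the theorem is meant to supply.
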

The characterization of $\add(\Mwf)$ in this theorem is already known from Pawlikowski’s $\add(\Mwf)=\min\{\bfrak,\non(\MAwf)\}$ (\cite{paw85}, and by \autoref{thm:a10}, $\minnon(\vec\Mwf) = \non(\MAwf)$).

Consequently, it is clear that $\minadd(\vec \Mwf) = \supadd(\vec\Mwf) = \add(\Mwf)$ and $\mincof(\vec \Mwf) =\supcof(\vec\Mwf) = \cof(\Mwf)$. 

Thanks to the directed scheme of ideals $\vec\Mwf$, we can estimate an upper bound of the cofinality of $\MAwf$. Combining \autoref{thm:a10} and \ref{thm:a4} results in:

\begin{corollary}\label{thm:a3}
For any $I\in\Ior$, 
\[\add(\Mwf)\leq\add(\MAwf)\lay\cof(\MAwf)\leq\cov(([\mathrm{cof}(\Mwf)]^{<\scriptstyle{\mathrm{add}(\Mwf)}})^{\dfrak})\]    
\end{corollary}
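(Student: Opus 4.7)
The plan is to unpack this as a direct combination of \autoref{thm:a10} and \autoref{thm:a4} applied to the directed scheme $\vec\Mwf$, noting that $\MAwf = \bigcap_{I \in \Ior}\Mwf_I = \vec\Mwf^*$ by definition. Since the inequalities in the statement do not actually depend on $I$, the phrase ``for any $I \in \Ior$'' is only a reminder of which directed scheme we are using.

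For the lower bound, I would first invoke \autoref{thm:a10}(a) to obtain $\minadd(\vec\Mwf) \leq \add(\MAwf)$, and then apply \autoref{thm:a4} to replace $\minadd(\vec\Mwf)$ by $\add(\Mwf)$ (since $\add(\Mwf_I) = \add(\Mwf)$ for every $I \in \Ior$ forces $\minadd(\vec\Mwf) = \add(\Mwf)$). The chain $\add(\MAwf) \leq \cof(\MAwf)$ is standard from \autoref{diag:idealI}, so the lower-bound half of the corollary is immediate.

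For the upper bound, I would apply \autoref{thm:a10}(d) to get
\[
\cof(\MAwf) \leq \cov\!\left(\left([\supcof(\vec\Mwf)]^{<\minadd(\vec\Mwf)}\right)^{\cof(\Ior)}\right),
\]
and then feed in the equalities from \autoref{thm:a4}: $\supcof(\vec\Mwf) = \cof(\Mwf)$ and $\minadd(\vec\Mwf) = \add(\Mwf)$. The remaining ingredient is the identification of $\cof(\Ior, \sqsubseteq)$ with $\dfrak$; this is a standard fact, because the map sending a partition $I = \seq{I_n}{n < \omega}$ to the increasing function enumerating $\{\min I_n : n < \omega\}$ translates $\sqsubseteq$ into eventual domination on $\baireinc$ up to a cofinal-equivalence, and $(\baireinc, \leq^*)$ has cofinality $\dfrak$. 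Substituting $\cof(\Ior) = \dfrak$ yields the claimed bound.

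The proof is therefore essentially assembly rather than invention; the only non-trivial checkpoint is the cofinality computation $\cof(\Ior) = \dfrak$, but this is routine and should be stated without more than a line of justification. No delicate step or estimate is expected to pose resistance.
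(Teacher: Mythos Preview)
Your proposal is correct and matches the paper's approach exactly: the corollary is stated as an immediate consequence of combining \autoref{thm:a10} and \autoref{thm:a4}, with the identification $\cof(\Ior)=\dfrak$ supplied by \autoref{ex:c1} (the Tukey equivalence $\Ior\eqT\baire$). The paper also notes, as you implicitly do, that the lower bound $\add(\Mwf)\leq\add(\MAwf)$ is already known from \autoref{lem:a0}.
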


Note that the first inequality in the prior theorem is already known (\autoref{lem:a0}~\ref{lem:a0:1}). 

Below, we illustrate the connections between the cardinal characteristics associated with the ideals $\Mwf_I$ and some other classical characteristics (\autoref{cichonext}).

\begin{teorema}\label{thm:a6}
For $I\in\Ior$: 
\begin{enumerate}[label=\rm(\arabic*)]
    \item If $\sum_{k<\omega}2^{-|I_k|}<\infty$, then $\non(\Mwf_I)\leq\non(\Ewf)$ and $\cov(\Ewf)\leq\cov(\Mwf_I)$.
    \item If $\sum_{k<\omega}2^{-|I_k|}=\infty$, then $\cov(\Nwf)\leq\non(\Mwf_I)$ and $\cov(\Mwf_I)\leq\non(\Nwf)$.
\end{enumerate}
\end{teorema}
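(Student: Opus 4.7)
The plan is to work with the concrete ``basic'' sets
\[
A_{J,y} := \set{x\in\cantor}{\forall^\infty n<\omega\,\exists k<\omega\colon I_k\subseteq J_n \text{ and } x{\upharpoonright}I_k = y{\upharpoonright}I_k}
\]
for $J\sqsupseteq I$ and $y\in\cantor$ (these are exactly the sets from~\eqref{for:ao}, so any $X\in\Mwf_I$ sits inside some such $A_{J,y}$, and conversely every $A_{J,y}$ lies in $\Mwf_I$). Writing $A_n(J,y):=\{x\in\cantor\colon \exists k<\omega,\ I_k\subseteq J_n \text{ and } x{\upharpoonright}I_k = y{\upharpoonright}I_k\}$, each $A_n(J,y)$ is clopen, $A_{J,y}=\liminf_n A_n(J,y)$ is $F_\sigma$, and by independence of the events ``$x{\upharpoonright}I_k = y{\upharpoonright}I_k$'' for different $k$,
\[
\mu(A_n(J,y))\leq \sum_{k\colon I_k\subseteq J_n} 2^{-|I_k|}, \qquad \mu(A_n(J,y)^c) = \prod_{k\colon I_k\subseteq J_n}\bigl(1-2^{-|I_k|}\bigr).
\]

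For part~(1) I would prove the stronger statement $\Mwf_I\subseteq\Ewf$, from which both inequalities $\non(\Mwf_I)\leq\non(\Ewf)$ and $\cov(\Ewf)\leq\cov(\Mwf_I)$ are immediate (a larger ideal has larger uniformity and smaller covering number). If $\sum_k 2^{-|I_k|}<\infty$, then summing the first estimate yields $\sum_n\mu(A_n(J,y))\leq\sum_k 2^{-|I_k|}<\infty$, so the first Borel--Cantelli lemma gives $\mu(\limsup_n A_n(J,y))=0$, forcing $\mu(A_{J,y})=0$. Because $A_{J,y}$ is $F_\sigma$, it belongs to $\Ewf$, and every $X\in\Mwf_I$ is contained in such an $A_{J,y}$.

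For part~(2) the strategy is to pick a \emph{single} coarsening $J\sqsupseteq I$ that works uniformly for every $y$. Exploiting $\sum_k 2^{-|I_k|}=\infty$, I would choose $J$ so that $\sum_{k\colon I_k\subseteq J_n} 2^{-|I_k|}\geq n$ for all $n$; then $1-t\leq e^{-t}$ gives $\mu(A_n(J,y)^c)\leq e^{-n}$, which is summable, so by Borel--Cantelli $\mu(A_{J,y})=1$ for every $y\in\cantor$. Both inequalities now follow from the symmetry $z\in A_{J,y}\iff y\in A_{J,z}$. For $\cov(\Nwf)\leq\non(\Mwf_I)$: given $X\subseteq\cantor$ with $|X|<\cov(\Nwf)$, the null family $\{A_{J,y}^c\colon y\in X\}$ does not cover $\cantor$, so any missed point $z$ satisfies $X\subseteq A_{J,z}$ by symmetry, witnessing $X\in\Mwf_I$. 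For $\cov(\Mwf_I)\leq\non(\Nwf)$: take any non-null $Y$ with $|Y|=\non(\Nwf)$; for every $z\in\cantor$ the conull set $A_{J,z}$ meets $Y$, so $\{A_{J,y}\colon y\in Y\}\subseteq\Mwf_I$ covers $\cantor$.

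The main obstacle I anticipate is the selection of the uniform coarsening $J\sqsupseteq I$ in part~(2): I must partition $\omega$ into consecutive blocks of $I$-intervals whose $2^{-|I_k|}$ masses sum to at least $n$, which is possible precisely because the total series diverges. Everything else reduces to standard Borel--Cantelli estimates and the symmetry of the agreement relation $x{\upharpoonright}I_k = y{\upharpoonright}I_k$, so no further subtlety is expected.
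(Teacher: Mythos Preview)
Your proof is correct and follows essentially the same route as the paper. Part~(1) is identical: the paper shows $\Mwf_I\subseteq\Ewf$ via the same Borel--Cantelli estimate on the generators $A^I_{J,y}$. Part~(2)'s measure computation is also identical: the paper picks the same coarsening $J$ with $\sum_{I_k\subseteq J_n}2^{-|I_k|}\geq n$ and uses $1-t\leq e^{-t}$ to get $\Lb(\cantor\smallsetminus A^I_{J,y})=0$.

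The only cosmetic difference is in how the cardinal inequalities in~(2) are extracted from the existence of a conull $A^I_{J,y}\in\Mwf_I$. You exploit the explicit symmetry $z\in A^I_{J,y}\iff y\in A^I_{J,z}$ directly; the paper instead observes that both $\Mwf_I$ and $\Nwf$ are translation invariant and invokes Rothberger's trick to obtain the Tukey connection $\Cv_\Nwf\leqT\Cv_{\Mwf_I}^\perp$ via $x\mapsto x+A^I_{J,\bar0}$ and $y\mapsto y+(\cantor\smallsetminus A^I_{J,\bar0})$. Since $A^I_{J,y}=y+A^I_{J,\bar0}$, your symmetry argument is exactly this Tukey connection unpacked, so the two arguments are the same in content.
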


\begin{figure}[ht]
\centering
\begin{tikzpicture}[scale=1.15]
\small{
\node (aleph1) at (-1.45,1.3) {$\aleph_1$};
\node (addn) at (-0.25,2.3){$\add(\Nwf)$};
\node (covn) at (-0.25,9){$\cov(\Nwf)$};
\node (nonn) at (11.5,2.3) {$\non(\Nwf)$};
\node (cfn) at (11.5,9) {$\cof(\Nwf)$};
\node (addm) at (3,2.3) {$\dodger{\add(\Mwf_I)}=\add(\Mwf)=\add(\Ewf)$};
\node (covm) at (7.5,2.3) {$\cov(\Mwf)$};
\node (nonm) at (3,9) {$\non(\Mwf)$};
\node (cfm) at (7.5,9) {$\cof(\Ewf)=\cof(\Mwf)=\dodger{\cof(\Mwf_I)}$};
\node (b) at (3,5.7) {$\bfrak$};
\node (d) at (7.5,5.7) {$\dfrak$};
\node (cove) at (6.8,3.5) {$\cov(\Ewf)$};
\node (none) at (4.25,7.5) {$\non(\Ewf)$};
\node (c) at (11.5,10) {$\cfrak$};

\node (addma) at (1.5,3.6) {\dodger{$\add(\MAwf)$}};
\node (nonma) at (1.5,4.5) {\dodger{$\non(\MAwf)$}};
\node (nonmI) at (1.5, 6.1){\dodger{$\non(\Mwf_I)$}};
\node (covmI) at (9.6,4.5) {\dodger{$\cov(\Mwf_{I})$}};
\node (supmI) at (9.3, 5.4){\dodger{$\supcov(\vec\Mwf)$}};
\node (covma) at (9.9,6.9) {\dodger{$\cov(\MAwf)$}};
\node (cofma) at (9.9,7.8) {\dodger{$\cof(\MAwf)$}};
\node (uppercofma) at (9.9,12) {\dodger{$\cov\left(([\cof(\Mwf)]^{<\add(\Mwf)})^\dfrak\right)$}};
\node (2d) at (11.8,11) {$2^\dfrak$};

 \draw (aleph1) edge[->] (addn);
  \draw(addn) edge[->] (covn)
      (covn) edge [->] (nonm)
      (covm) edge [->] (nonn)
      (nonm)edge [->] (cfm)
      (cfm)edge [->] (cfn)
      (cfn) edge[->] (c);

\draw
   (addn) edge [->]  (addm)
   (addm) edge [->]  (covm)
   (nonn) edge [->]  (cfn);
\draw (addm) edge [->] (b)
      (b)  edge [->] (nonm);
\draw (covm) edge [->] (d)
      (d)  edge[->] (cfm);
\draw (b) edge [->] (d);

\draw (none) edge[line width=.15cm,white,-]  (nonn);
\draw (none) edge [->]  (nonn);
\draw (none) edge [->]  (nonm);
\draw (none) edge [->]  (cfm);
\draw (cove) edge [<-]  (covm);
\draw (cove) edge [<-]  (addm);

\draw  (cove) edge[line width=.15cm,white,-] (covn);
\draw  (cove) edge [<-]  (covn);

\draw  (cove) edge[line width=.15cm,white,-] (covmI);
\draw  (cove) edge [->]  (covmI);

\draw  (cove) edge[line width=.15cm,white,-] (cfm);
\draw  (cove) edge [->]  (cfm);

\draw  (addm) edge[line width=.15cm,white,-] (none);
\draw  (addm) edge [->]  (none);


\draw (nonmI) edge[line width=.15cm,white,-]  (none);
\draw (nonmI) edge [->]  (none);

\draw (nonmI) edge[line width=.15cm,white,-]  (nonm);
\draw (nonmI) edge [->]  (nonm);

\draw (nonma) edge [->]  (nonmI);
\draw (addma) edge [->]  (nonma);
\draw (addm) edge [->]  (addma);

\draw(covm) edge[line width=.15cm,white,-]  (covmI) ;
\draw(covm) edge [->]  (covmI) ;

\draw (supmI) edge [<-]  (covmI);
\draw (supmI) edge [->]  (covma);
\draw (covma) edge [->]  (cofma);

\draw (cofma) edge[line width=.15cm,white,-]  (uppercofma);
\draw (cofma) edge [->]  (uppercofma);

\draw (nonma) edge[line width=.15cm,white,-]  (cofma);
\draw (nonma) edge [->]  (cofma);

\draw (supmI) edge[line width=.15cm,white,-]  (cfm);
\draw (supmI) edge [->]  (cfm);

\draw (addma) edge[line width=.15cm,white,-]  (covma);
\draw (addma) edge [->]  (covma);

\draw (uppercofma) edge [->]  (2d);
\draw (c) edge [->]  (2d);
\draw (cfm) edge [->]  (uppercofma);

}
\end{tikzpicture}
\caption{Cicho\'n's diagram including the cardinal characteristics associated with our ideals, and $\add(\Ewf) = \add(\Mwf)$ and $\cof(\Ewf) = \cof(\Mwf)$ due to Bartoszy{\'n}ski and Shelah~\cite{BS1992}. 
The inequality $\non(\Mwf_I)\leq\non(\Ewf)$ and $\cov(\Ewf)\leq\cov(\Mwf_I)$ holds whenever $\sum_{k<\omega}2^{-|I_k|}<\infty$.
}
\label{cichonext}
\end{figure}

\autoref{thm:a4} and~\ref{thm:a6} will be proved in \autoref{sec:s2}. 
At the end of this section, we further show the connections between $\Mwf_I$ and $\Rbf_b$ ($b\in \baire$), a relational system that helps us in~\cite{CMR2} dissect and reformulate Barotszy\'nski's and Judah's~\cite{bartJudah} characterization of $\non(\MAwf)$. 

In \autoref{sec:s4} we present consistency results. The authors proved in~\cite{CMR2} the consistency of $\cov(\MAwf)<\cfrak$, even $\cov(\MAwf)<\non(\SNwf)$. Since $\MAwf\subseteq\SNwf$, $\cov(\SNwf)\leq\cov(\MAwf)$, but $\cov(\SNwf)=\cfrak$ holds in Sacks model (see e.g.~\cite{CMRM}). We asked about the consistency of $\cov(\NAwf)<\cfrak$ in~\cite{CMR2}, which we solve in this paper. 

\begin{teorema}\label{thm:a11}
    It is consistent with $\thzfc$ that $\cov(\NAwf)<\cfrak$.
\end{teorema}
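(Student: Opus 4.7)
The plan is to exhibit a model of $\thzfc$ where $\cov(\NAwf)=\aleph_1<\aleph_2=\cfrak$. My starting point is Shelah's slalom characterization \autoref{thm:a0}\ref{thm:a0:NA}: a set $X\subseteq\cantor$ is null-additive iff for every partition $I\in\Ior$ there exists $\varphi\in\prod_{n<\omega}\pts({}^{I_n}2)$ with $|\varphi(n)|\leq n$ and $X\subseteq H_\varphi$. Consequently, $\cov(\NAwf)$ equals the minimum $\kappa$ such that for every $x\in\cantor$ there is some $\alpha<\kappa$ and, for each $I\in\Ior$, a slalom $\varphi_{\alpha,I}$ with $|\varphi_{\alpha,I}(n)|\leq n$ and $x\in H_{\varphi_{\alpha,I}}$. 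This reformulation turns the consistency question into one of preserving a small family of \emph{uniformly slalom-capturing} systems through a forcing extension, in the same spirit in which the directed scheme $\vec\Mwf$ of \autoref{sec:s2} controls $\cov(\MAwf)$.

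First, I would work over a ground model $V$ of CH, where $\cov(\NAwf)=\aleph_1$ trivially, and fix a family $\{X_\alpha:\alpha<\omega_1\}\subseteq\NAwf$ with $\bigcup_\alpha X_\alpha=\cantor^V$ together with its associated slalom systems $\{\varphi_{\alpha,I}:\alpha<\omega_1,\,I\in\Ior^V\}$. I would then define a countable-support iteration $\langle\Pbb_\gamma,\Qnm_\gamma:\gamma<\omega_2\rangle$ of proper forcings whose iterands $\Qnm_\gamma$ are tree-style forcings in the spirit of Sacks, Silver, or the ones used in \cite{CMRM,cardona} to control $\SNwf$, modified so that every real $x$ added by $\Qnm_\gamma$ is captured by a suitably extended slalom system of the same kind: in $V^{\Pbb_{\gamma+1}}$ there is some $\alpha<\omega_1$ (depending on $x$) and an extension of the family such that $x\in H_{\varphi_{\alpha,I}}$ for every $I\in\Ior$ lying in $V^{\Pbb_{\gamma+1}}$. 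The $\omega_2$-length of the iteration will force $\cfrak=\aleph_2$.

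The crux, and main obstacle, is the preservation theorem asserting that the property ``the extended family still covers $\cantor$'' survives the countable-support iteration. This is substantially harder than the classical preservation of $\cov(\Nwf)$ or $\cov(\Mwf)$, because membership in $\NAwf$ is a \emph{simultaneous} quantification over all $I\in\Ior$, coupling the measure-theoretic and combinatorial aspects of the ideal, and because any new partition $I$ arising at stage $\gamma$ must also be handled by the slaloms chosen for $x$. I would tackle the single-step preservation via Shelah-style fusion on the tree conditions of $\Qnm_\gamma$, securing, for each new real and each $I$, a \emph{single} index $\alpha$ whose slaloms work against every partition; limit stages are then handled through standard countable-support preservation once the single-step property is formalized as a Shelah-preservable relation in the style of the preservation framework used in \cite{CMRM,cardona}. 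Once this is in place, $\cov(\NAwf)\leq\aleph_1$ holds in the extension while $\cfrak=\aleph_2$, yielding the desired consistency.
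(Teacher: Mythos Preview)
Your proposal is a research outline rather than a proof: the decisive step---the preservation of ``$\{X_\alpha:\alpha<\omega_1\}$ covers $\cantor$ and each $X_\alpha\in\NAwf$'' through the iteration---is asserted but not carried out. You acknowledge this is the crux and defer it to ``Shelah-style fusion'' and ``standard countable-support preservation,'' but neither the single-step argument nor the iterable relation is actually specified. More seriously, there is concrete evidence that the template you invoke fails: the paper notes that in the Sacks model $\cov(\SNwf)=\cfrak$, and since $\NAwf\subseteq\SNwf$ we have $\cov(\NAwf)\geq\cov(\SNwf)=\cfrak$ there. So the most natural instance of your ``tree-style forcing in the spirit of Sacks'' does \emph{not} give $\cov(\NAwf)<\cfrak$, and your proposed modification (``every real added is captured by a suitably extended slalom system'') is not defined precisely enough to see why it would behave differently.

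The paper's route is entirely different and avoids preservation technology altogether. The key new ingredient is \autoref{thm:b20}, a sharpening of Shelah's characterization: in \autoref{thm:a0}\ref{thm:a0:NA} it suffices to quantify over a \emph{dominating} family $D\subseteq\Ior$ rather than all of $\Ior$. This yields the Tukey inequality $\Cv_{\NAwf}\leqT\prod_{b\in D}\Lc(b,\id)$ (\autoref{thm:b3}), so $\cov(\NAwf)$ is bounded above by the dominating number of a product of localization systems. One then builds a ccc finite-support iteration of length $\lambda\theta$ alternating Hechler forcing and localization forcing $\Loc_{d,\id}$; in the extension there is a dominating family $D$ of size $\theta$ with each $\Lc(d,\id)\eqT\theta$, whence $\cov(\NAwf)\leq\dfrak\bigl(\la\theta,\leq\ra^\theta\bigr)=\dfrak_\theta$, while $\cfrak=\lambda$. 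Choosing $\dfrak_\theta<\lambda$ in the ground model finishes the argument. This is both simpler and more informative than a CS-iteration approach: it gives an explicit upper bound for $\cov(\NAwf)$ in terms of classical invariants and requires no new preservation theorem.
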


To prove this, we refine Shelah's characterization of $\NAwf$ from \autoref{thm:a0}~\ref{thm:a0:NA} and show a relevant Tukey connection that gives us a suitable upper bound of $\cov(\NAwf)$. 

A final consistency result is the following.
\begin{teorema}\label{thm:a99}
    It is consistent with $\thzfc$ that $\cof(\MAwf)<\cfrak$, even $\cof(\MAwf)<\non(\SNwf)$.
\end{teorema}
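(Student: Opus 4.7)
The plan is to apply Corollary~\ref{thm:a3}, which gives
\[
\cof(\MAwf)\leq \cov\bigl(([\cof(\Mwf)]^{<\add(\Mwf)})^{\dfrak}\bigr),
\]
and to produce a forcing model in which this upper bound is strictly smaller than $\non(\SNwf)$; this will in particular yield $\cof(\MAwf)<\cfrak$. The strategy is to refine the forcing construction from~\cite{CMR2} (which already witnesses $\cov(\MAwf)<\non(\SNwf)$) so that, in addition to bounding $\cov(\MAwf)$, it also keeps $\cof(\Mwf)$, $\add(\Mwf)$, $\dfrak$, and the auxiliary cardinal $\cov(([\cof(\Mwf)]^{<\add(\Mwf)})^{\dfrak})$ small enough to sit below $\non(\SNwf)$.

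Concretely, I would aim for a model satisfying $\add(\Mwf)=\cof(\Mwf)=\dfrak=\aleph_1$, $\non(\SNwf)=\cfrak=\aleph_2$, together with the auxiliary combinatorial statement $\cov(([\omega_1]^{\leq\omega})^{\omega_1})=\aleph_1$ (a PCF/guessing statement secured, e.g., by starting from a ground model of $V=L$ or one satisfying sufficient $\diamond$/$\square$ principles). Over such a ground model I would perform a countable-support iteration of length $\omega_2$ of a proper, $\omega^\omega$-bounding forcing that generically adds strong measure zero sets in each step---a Sacks-like tree forcing in the spirit of those used in~\cite{cardona,CM23,CMRM} to control $\cof(\SNwf)$ and $\cov(\SNwf)$. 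Standard preservation theorems (for the Sacks property, $\omega^\omega$-boundedness, preservation of ground-model meager covers, and preservation of $\square$-style principles at $\omega_1$) should keep $\add(\Mwf)$, $\cof(\Mwf)$, $\dfrak$, and the auxiliary cardinal at $\aleph_1$, while the $\omega_2$-many generic sets push $\non(\SNwf)$ up to $\aleph_2$. Corollary~\ref{thm:a3} will then deliver $\cof(\MAwf)\leq\aleph_1<\aleph_2=\non(\SNwf)$.

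The key step I expect to be the main obstacle is verifying that the iteration preserves $\cof(\Mwf)=\aleph_1$: by Theorem~\ref{thm:a4} this is equivalent to preserving $\supcov(\vec\Mwf)\leq\aleph_1$, i.e., keeping $\cov(\Mwf_I)\leq\aleph_1$ uniformly across all $I\in\Ior$. The danger is that the generic strong measure zero sets, while useful for pushing $\non(\SNwf)$ up, could form a new covering obstruction for some $\Mwf_I$ and thereby blow $\cof(\Mwf)$ up. Achieving the right balance---genericity enough to escape every ground-model $\SNwf$-cover, yet tameness enough to remain covered by ground-model $\Mwf_I$-sets for every partition $I$---is the delicate point of the construction. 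Here the explicit combinatorial reformulation of $\MAwf$ via $\vec\Mwf$, and in particular the characterization in Theorem~\ref{thm:a0}\ref{thm:a0:MA}, gives a concrete target for the genericity conditions, and the argument should parallel the one in~\cite{CMR2} for $\cov(\MAwf)<\non(\SNwf)$, now lifted to the cofinality level via Corollary~\ref{thm:a3}.
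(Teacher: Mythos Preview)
Your core insight---apply the upper bound from \autoref{thm:a3} (equivalently \autoref{cor:b80}) and arrange a model where that bound is small---is exactly what the paper does. But the forcing construction you outline diverges from the paper and is far more complicated than needed.

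The paper does \emph{not} build a new model. It simply reuses the ccc finite-support iteration from~\cite[Thm.~4.2]{CMR2} unchanged. That model already forces $\add(\Mwf)=\cof(\Mwf)=\dfrak=\theta$ for a prescribed regular cardinal~$\theta$, together with $\non(\Nwf)=\cfrak=\lambda$ (and, from the CMR2 argument, $\non(\SNwf)$ large). The only new observation is that for regular~$\theta$ one has
\[
\cov\bigl(([\theta]^{<\theta})^{\theta}\bigr)=\dfrak_\theta,
\]
and since the iteration is ccc the ground-model value $\dfrak_\theta=\nu$ is preserved. Choosing $\nu<\lambda$ in the ground model then gives $\cof(\MAwf)\leq\nu<\lambda$ directly. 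No preservation argument for $\cof(\Mwf)$ is needed beyond what CMR2 already established.

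Your proposed countable-support iteration of proper $\omega^\omega$-bounding forcings is a genuinely different route, and the obstacle you flag---keeping $\cof(\Mwf)=\aleph_1$ while pushing $\non(\SNwf)$ to $\aleph_2$---is real for that approach and not obviously surmountable with the vague ``Sacks-like tree forcing that adds generic strong measure zero sets'' you describe. In particular, you would need to specify a forcing that increases $\non(\SNwf)$ yet has the Sacks property (to keep $\cof(\Nwf)$, hence $\cof(\Mwf)$, at $\aleph_1$), and then check that the guessing principle ensuring $\dfrak_{\omega_1}=\aleph_1$ survives the iteration. None of this is necessary: the ccc model is already on the shelf.
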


The directed scheme $\vec\Mwf$ helps us see that this inequality holds in our model of $\cov(\MAwf)<\non(\SNwf)$ presented in~\cite{CMR2}.

\section{Directed schemes of ideals}\label{sec:s1}

Most of the results of this section (let alone this paper) are presented in terms of \emph{relational systems}. Our review of relational systems is based on~\cite{Vojtas,blass}. 
We say that $\Rbf=\la X, Y, \sqsubset\ra$ is a \textit{relational system} if it consists of two non-empty sets $X$ and $Y$ and a relation $\sqsubset$.
\begin{enumerate}[label=(\arabic*)]
    \item A set $F\subseteq X$ is \emph{$\Rbf$-bounded} if $\exists\, y\in Y\ \forall\, x\in F\colon x \sqsubset y$. 
    \item A set $E\subseteq Y$ is \emph{$\Rbf$-dominating} if $\forall\, x\in X\ \exists\, y\in E\colon x \sqsubset y$. 
\end{enumerate}
We associate two cardinal characteristics with this relational system:
\begin{itemize}
    \item[{}] $\bfrak(\Rbf):=\min\{|F|\colon  F\subseteq X  \text{ is }\Rbf\text{-unbounded}\}$ the \emph{bounding number of $\Rbf$}, and
    
    \item[{}] $\dfrak(\Rbf):=\min\{|D|\colon  D\subseteq Y \text{ is } \Rbf\text{-dominating}\}$ the \emph{dominating number of $\Rbf$}.
\end{itemize}
The \emph{dual of $\Rbf$} is the relational system $\Rbf^\perp := \la Y,X,\sqsubset^\perp\ra$ where $y \sqsubset^\perp x$ means $x \not\sqsubset y$. Note that $\bfrak(\Rbf^\perp) = \dfrak(\Rbf)$ and $\dfrak(\Rbf^\perp) = \bfrak(\Rbf)$.

For instance, a preorder $\la P,\leq\ra$ (i.e.\ $\leq$ is reflexive and transitive in $P$) can be viewed as the relational system $\la P,P,\leq\ra$, simply denoted by $P$ when $\leq$ is understood. In this context, $\dfrak(P)$ is usually denoted by $\cof(P)$ and $\cf(P)$, and called the \emph{cofinality of $P$}.



The following are representative directed preorders, mostly present in this work.

\begin{definition}\label{ex:c0}
Define the relation $\leq^*$ on $\baire$ by $x\leq^* y$ means $\forall^\infty n<\omega\colon x(n)\leq y(n)$. 
It is clear that $\la\baire\leq^*\ra$ is a directed preorder and 
we use $\baire$ to denote its relational system. Then $\bfrak:=\bfrak(\baire)$ and $\dfrak:=\dfrak(\baire)$ are the well-known \emph{bounding number} and \emph{dominating number}, respectively.
\end{definition}


\begin{example}\label{exa:c2}
Let $\Iwf$ be an ideal on $X$ containing $[X]^{<\aleph_0}$. As a relational system, $\Iwf$ is the partial order $\la\Iwf,\subseteq\ra$, which is directed. Another relational system associated with $\Iwf$ is $\Cv_\Iwf:=\la X,\Iwf,\in\ra$. The cardinal characteristics associated with $\Iwf$ are obtained by these relational systems:
\begin{align*}
    \bfrak(\Iwf) & =\add(\Iwf), &
    \dfrak(\Iwf) & =\cof(\Iwf),\\
    \bfrak(\Cv_\Iwf) & =\non(\Iwf), &
    \dfrak(\Cv_\Iwf) & =\cov(\Iwf).
\end{align*}
\end{example}

 For relational systems $\Rbf=\la X,Y,\sqsubset\ra$ and $\Rbf'=\la X',Y',\sqsubset'\ra$,  a pair $(\Psi_-,\Psi_+)$ is a \emph{Tukey connection from $\Rbf$ into $\Rbf'$} if 
 $\Psi_-\colon X\to X'$ and $\Psi_+\colon Y'\to Y$ are functions satisfying  
 \[\forall\, x\in X\ \forall\, y'\in Y'\colon \Psi_-(x) \sqsubset' y' \Rightarrow x \sqsubset \Psi_+(y').\] 
 We say that $\Rbf$ is \emph{Tukey below} $\Rbf'$, denoted by $\Rbf\leqT\Rbf'$, if there is a Tukey connection from $\Rbf$ into $\Rbf'$, and we 
 say that $\Rbf$ and $\Rbf'$ are \emph{Tukey equivalent}, denoted by $\Rbf\eqT\Rbf'$, if $\Rbf\leqT\Rbf'$ and $\Rbf'\leqT\Rbf$. It is well-known that $\Rbf\leqT\Rbf'$ implies $(\Rbf')^\perp \leqT \Rbf^\perp$, $\bfrak(\Rbf')\leq\bfrak(\Rbf)$ and $\dfrak(\Rbf)\leq\dfrak(\Rbf')$. Hence, $\Rbf\eqT\Rbf'$ implies $\bfrak(\Rbf')=\bfrak(\Rbf)$ and $\dfrak(\Rbf)=\dfrak(\Rbf')$.

\begin{example}\label{ex:c4}
\ 
\begin{enumerate}[label=\rm (\arabic*)]
    \item \label{ex:c4:0} Let $\Iwf$ and $\Jwf$ be ideals on $X$. If $\Iwf\subseteq\Jwf$ then $\Cv_\Jwf\leqT\Cv_\Iwf$.
    
    \item\label{ex:c4:1} For any ideal $\Iwf$ on $X$, $\Cv_\Iwf\leqT\Iwf$ and $\Cv^\perp_\Iwf\leqT\Iwf$. These determine some of the inequalities in \autoref{diag:idealI}.
    
    \item\label{ex:c4:2} If $\theta'\leq\theta$ are cardinals and $\theta\leq|X|\leq|X'|$, then $\Cv_{[X]^{<\theta}}\leqT\Cv_{[X']^{<\theta'}}$ and $[X]^{<\theta}\leqT[X']^{<\theta}$.
    
    \item\label{ex:c4:3} For any cardinal $\mu$, $\Cv_{[\mu]^{<\mu}}\leqT\mu \leqT\Cv^\perp_{[\mu]^{<\mu}} \leqT[\mu]^{<\mu}$. In the case when $\mu$ is regular, $[\mu]^{<\mu}\leqT \Cv_{[\mu]^{<\mu}}$, so $\add([\mu]^{<\mu})=\cof([\mu]^{<\mu})=\mu$.
    
    \item\label{ex:c4:4} Whenever $S$ is a directed preorder, $S\leqT\Cv_{[\dfrak(S)]^{<\bfrak(S)}}$.
    
\end{enumerate}
\end{example}

\begin{example}[{\cite{blass}}]\label{ex:c1}
    The directed preorder $\Ior=\la\Ior,\sqsubseteq\ra$ is Tukey-equivalent with $\baire$. Therefore, $\bfrak(\Ior)=\bfrak$ and $\dfrak(\Ior)=\dfrak$.
\end{example}

To prove the results of this work, we employ the following operations of relational systems.

\begin{definition}\label{def:prodrel}
Let $\vec{\Rbf} = \seq{\Rbf_i}{i\in K}$, $\Rbf_i = \la X_i,Y_i,\sqsubset_i\ra$, be a sequence of relational systems. Define the following relational systems.
\begin{enumerate}[label= \normalfont (\arabic*)]
    \item $\bigvee\vec\Rbf = \bigvee_{i\in K}\Rbf_i :=\left\la \bigcup_{i\in K}\{i\}\times X_i, \prod_{i\in K} Y_i, \sqsubset^\vee  \right\ra$ such that $(i,x)\sqsubset^\vee y$ iff $x\sqsubset_i y_i$. 
    \item $\prod \vec{\Rbf} = \prod_{i\in K} \Rbf_i:=\left\la \prod_{i\in K}X_i, \prod_{i\in K}Y_i, \sqsubset^\otimes \right\ra$ where $x \sqsubset^\otimes y$ iff $x_i\sqsubset_i y_i$ for all $i\in K$.

     When $\Rbf_i = \Rbf$ for all $i\in K$, we write $\Rbf^K := \prod \vec{\Rbf}$.
    \item When $K=\gamma$ is an ordinal, the \emph{sequential composition} is
    \[\vec\Rbf^; := \left\la \prod_{\alpha<\gamma}{}^{\prod_{i<\alpha}Y_i} X_\alpha, \prod_{\alpha<\gamma} Y_\alpha, \sqsubset^; \right\ra\]
    where $\bar f = \seq{f_\alpha}{\alpha<\gamma} \sqsubset^; y$ iff $f_\alpha(y\frestr \alpha)\sqsubset_\alpha y_\alpha$ for all $\alpha<\gamma$. 
\end{enumerate}
When $K=\{0,1\}$, we denote the previous relational systems by $\Rbf_0\vee \Rbf_1$, $\Rbf_0\times \Rbf_1$ and $(\Rbf_0;\Rbf_1)$. 
\end{definition}

Blass~\cite[Thm.~4.11]{blass} states a result about the cardinal characteristics associated with the operations above for $K=\{0,1\}$, which we generalize below.

\begin{theorem}\label{products}
    Let $\vec{\Rbf} = \seq{\Rbf_i}{i\in K}$ be a sequence of relational systems $\Rbf_i = \la X_i,Y_i,\sqsubset_i\ra$.
    \begin{enumerate}[label = \normalfont (\alph*)]
        \item\label{a251a} $\bigvee \vec\Rbf$ is the supremum of $\set{\Rbf_i}{i\in K}$ with respect to $\leqT$, i.e.\ $\Rbf_i\leqT \bigvee \vec\Rbf$ for all $i\in K$ and, whenever $\Rbf'$ is another relational system such that $\Rbf_i\leqT \Rbf'$ for all $i\in K$, $\bigvee \vec\Rbf \leqT \Rbf'$.

        \item\label{a251b} $\bigvee \vec\Rbf \leqT \prod \vec\Rbf$ and $\prod \vec\Rbf \leqT \vec\Rbf^;$, the latter when $K$ is an ordinal.

        \item\label{a251c} $\bfrak(\bigvee \vec\Rbf) = \min_{i\in K}\bfrak(\Rbf_i)$ and $\dfrak(\bigvee \vec\Rbf) = \sup_{i\in K}\dfrak(\Rbf_i)$.

        \item\label{a251d} $\bfrak(\prod \vec\Rbf) = \min_{i\in K}\bfrak(\Rbf_i)$ and $\sup_{i\in K}\dfrak(\Rbf_i) \leq \dfrak(\prod \vec\Rbf) \leq \prod_{i\in K}\dfrak(\Rbf_i)$. \cite[Fact~3.9]{CarMej23}

        \item\label{a251e} When $K$ is an ordinal, $\bfrak(\vec\Rbf^;) = \min_{i\in K}\bfrak(\Rbf_i)$ and $\dfrak(\vec\Rbf^;)\leq \prod_{i\in K}\dfrak(\Rbf_i)$. Equality holds when $K$ is finite. 

    \end{enumerate}
\end{theorem}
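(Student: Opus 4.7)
The plan is to deduce all five items from a small stock of explicit Tukey connections, reading off the cardinal identities via Tukey monotonicity and filling in the remaining inequalities with direct constructions of bounding and dominating families. Three basic maps do all the work: the canonical inclusion/projection relating $\Rbf_i$ to $\bigvee\vec\Rbf$; the insertion of a fixed coordinate tuple to pass from $\bigvee\vec\Rbf$ to $\prod\vec\Rbf$; and the assignment of each value to a constant function to pass from $\prod\vec\Rbf$ to $\vec\Rbf^;$.

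For (a), $\Rbf_i\leqT\bigvee\vec\Rbf$ is given by $\Psi_-^i(x)=(i,x)$ and $\Psi_+^i(y)=y_i$. The universal property follows by pasting together Tukey reductions $(\Phi_-^i,\Phi_+^i)\colon\Rbf_i\to \Rbf'$ via $\Psi_-(i,x)=\Phi_-^i(x)$ and letting $\Psi_+(y')$ be the tuple with $i$-th coordinate $\Phi_+^i(y')$. For (b), $\bigvee\vec\Rbf\leqT\prod\vec\Rbf$ is witnessed by fixing a base tuple $\bar x^0\in\prod_j X_j$ and sending $(i,x)$ to the tuple agreeing with $\bar x^0$ off $i$ and equal to $x$ at $i$, with $\Psi_+$ the identity on $\prod_j Y_j$. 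When $K=\gamma$ is an ordinal, $\prod\vec\Rbf\leqT\vec\Rbf^;$ is witnessed by sending $x$ to the sequence of constant functions $f_\alpha\equiv x_\alpha$, again with $\Psi_+=\id$; unpacking $\Psi_-(x)\sqsubset^; y$ yields $x_\alpha\sqsubset_\alpha y_\alpha$ for every $\alpha$.

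The upper bound $\bfrak\leq\min_i\bfrak(\Rbf_i)$ and the lower bound $\dfrak\geq\sup_i\dfrak(\Rbf_i)$ in (c) and (d) then follow from (a), (b), and Tukey monotonicity. For the reverse $\bfrak$ inequalities, take $F$ with $|F|<\min_i\bfrak(\Rbf_i)$: in $\bigvee\vec\Rbf$, each slice $F_i=\{x:(i,x)\in F\}$ has cardinality below $\bfrak(\Rbf_i)$, so some $y_i$ bounds it, and $y=(y_i)_i$ bounds $F$; in $\prod\vec\Rbf$, bound each projection $\{x_i:x\in F\}$ by some $y_i$ and use the coordinatewise tuple. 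For the $\dfrak$ bound in (c), fix dominating $D_i\subseteq Y_i$ of size $\dfrak(\Rbf_i)$ and a base point $\bar y^0$; the family of tuples equal to $\bar y^0$ off $i$ and to some $d\in D_i$ at $i$ is $\bigvee\vec\Rbf$-dominating (its size absorbs $|K|$ into $\sup_i\dfrak(\Rbf_i)$ whenever the latter is infinite and at least $|K|$). For (d), the full product $\prod_i D_i$ dominates $\prod\vec\Rbf$.

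The delicate part is (e), handled by transfinite recursion along $K=\gamma$. For $\bfrak(\vec\Rbf^;)\geq\min_\alpha\bfrak(\Rbf_\alpha)$, given $F$ of cardinality less than every $\bfrak(\Rbf_\alpha)$, build $y\in\prod_\alpha Y_\alpha$ stagewise: once $y\frestr\alpha$ is fixed, the set $\{f_\alpha(y\frestr\alpha):f\in F\}\subseteq X_\alpha$ has size below $\bfrak(\Rbf_\alpha)$, so some $y_\alpha\in Y_\alpha$ bounds it, and the assembled $y$ witnesses $f\sqsubset^; y$ for every $f\in F$ simultaneously. For $\dfrak(\vec\Rbf^;)\leq\prod_\alpha\dfrak(\Rbf_\alpha)$, pick dominating $D_\alpha\subseteq Y_\alpha$ of size $\dfrak(\Rbf_\alpha)$; then $D=\prod_\alpha D_\alpha$ is $\vec\Rbf^;$-dominating, since given $f$ one recursively chooses $y_\alpha\in D_\alpha$ with $f_\alpha(y\frestr\alpha)\sqsubset_\alpha y_\alpha$, and the resulting $y$ lies in $D$. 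When $K$ is finite, $\prod_\alpha\dfrak(\Rbf_\alpha)=\sup_\alpha\dfrak(\Rbf_\alpha)$ for infinite cardinals, matching the lower bound coming from (b). The main obstacle lies here: equality in the $\dfrak$ bound for $\vec\Rbf^;$ can fail for infinite $\gamma$, and one must verify carefully that the stagewise choices produce an element of the \emph{fixed} product $D$ rather than a dominating family tailored to each $f$; this is precisely where the sequential structure of $\sqsubset^;$ is essential.
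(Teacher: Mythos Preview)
Your overall strategy matches the paper's, and the explicit Tukey connections you give for (a) and (b) are correct. Two places, however, leave genuine gaps against the theorem as stated.

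In (c), your dominating family for $\bigvee\vec\Rbf$ (tuples equal to a fixed $\bar y^0$ off one coordinate) has size $\sum_{i\in K}|D_i|\leq |K|\cdot\sup_i\dfrak(\Rbf_i)$, which can exceed $\sup_i\dfrak(\Rbf_i)$; your parenthetical caveat about $|K|$ concedes this, but the statement is unconditional. The paper avoids the problem by enumerating each $D_i$ as $\set{y^i_\xi}{\xi<\dfrak(\Rbf_i)}$, padding with $y^i_\xi:=y^i_0$ for $\dfrak(\Rbf_i)\leq\xi<\sup_j\dfrak(\Rbf_j)$, and taking the diagonal family $\set{\seq{y^i_\xi}{i\in K}}{\xi<\sup_j\dfrak(\Rbf_j)}$, which has size exactly the supremum regardless of $|K|$.

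In (e) for finite $K$, your argument that $\prod_\alpha\dfrak(\Rbf_\alpha)=\sup_\alpha\dfrak(\Rbf_\alpha)$ relies on cardinal arithmetic for infinite cardinals, so it does not establish the claimed equality when some $\dfrak(\Rbf_\alpha)$ is finite (and the lower bound from (b) is only $\sup_\alpha\dfrak(\Rbf_\alpha)$, not the product). The paper proves the missing inequality $\dfrak(\vec\Rbf^;)\geq\dfrak(\Rbf_0)\cdot\dfrak(\Rbf_1)$ directly, after reducing to $K=\{0,1\}$ by associativity: given $D\subseteq Y_0\times Y_1$ with $|D|<\dfrak(\Rbf_0)\cdot\dfrak(\Rbf_1)$, the set $D_0$ of those $y_0$ whose fiber $D_1^{y_0}=\set{y_1}{(y_0,y_1)\in D}$ is $\Rbf_1$-dominating cannot be $\Rbf_0$-dominating (else a counting argument gives $|D|\geq\dfrak(\Rbf_0)\cdot\dfrak(\Rbf_1)$); one then picks $x_0$ not $\Rbf_0$-dominated by any member of $D_0$, and for each $y_0\notin D_0$ picks $f_1(y_0)$ not $\Rbf_1$-dominated by any member of the non-dominating fiber $D_1^{y_0}$, so that $(f_0,f_1)$ witnesses $D$ is not $\vec\Rbf^;$-dominating. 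This works uniformly for finite and infinite cardinals.
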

\begin{proof}
    Without loss of generality, we can assume that $K = \gamma$ is an ordinal.

    For $i\in K$, $\Rbf_i\leqT \bigvee \vec\Rbf$ is obtained by the maps $x\mapsto (i,x)$ and $y\mapsto y(i)$.

    If, for each $i\in K$, $(\Psi^i_-,\Psi^i_+)$ is a Tukey connection for $\Rbf_i\leqT \Rbf'$, then $\bigvee \vec\Rbf \leqT \Rbf'$ is obtained by the maps $(i,x)\mapsto \Psi^i_-(x)$ and $y'\mapsto \seq{\Psi^i_+(y')}{i\in K}$. This shows~\ref{a251a}.

    It is easy to show that $\Rbf_i\leqT \prod \vec\Rbf$ for all $i\in K$, so $\bigvee \vec\Rbf\leqT \prod \vec\Rbf$ by~\ref{a251a}. 
    On the other hand, $\prod \vec\Rbf \leqT \vec\Rbf^;$ via the maps $x \mapsto \seq{f^x_\alpha}{\alpha<\gamma}$ such that each $f^x_\alpha$ is the constant map with value $x_\alpha$, and $y\mapsto y$.
    This shows~\ref{a251b}.

    By~\ref{a251a} it is clear that $\bfrak(\bigvee \vec\Rbf) \leq\bfrak(\Rbf_i)$ and $\dfrak(\Rbf_i) \leq \dfrak(\bigvee \vec\Rbf)$ for all $i\in K$, so $\bfrak(\bigvee \vec\Rbf) \leq\min_{i\in K}\bfrak(\Rbf_i)$ and $\sup_{i\in K}\dfrak(\Rbf_i) \leq \dfrak(\bigvee \vec\Rbf)$.


    For each $i\in K$, pick some $\Rbf_i$-dominating $D^i:=\set{y^i_\xi}{\xi<\dfrak(\Rbf_i)}\subseteq Y_i$. When $\dfrak(\Rbf_i)\leq \xi<\sup_{i\in K}\dfrak(\Rbf_i)$, set $y^i_\xi:= y^i_0$. So define $y_\xi:= \seq{y^i_\xi}{i\in K}$ for $\xi<\sup_{i\in K}\dfrak(\Rbf_i)$. Then the set $\set{y_\xi}{\xi<\sup_{i\in K}\dfrak(\Rbf_i)}$ is $\bigvee \vec\Rbf$-dominating, so $\dfrak(\bigvee \vec\Rbf)\leq \sup_{i\in K}\dfrak(\Rbf_i)$. On the other hand, it is easy to see that $\prod_{i\in K}D_i$ is dominating for $\prod \vec\Rbf$ and $\vec\Rbf^;$, so $\dfrak(\prod \vec\Rbf) \leq \dfrak(\vec\Rbf^;)\leq \prod_{i\in K}\dfrak(\Rbf_i)$.


    By~\ref{a251b} and what we have proved so far, $\bfrak(\vec\Rbf^;)\leq \bfrak(\prod \vec\Rbf) \leq \bfrak(\bigvee \vec\Rbf) \leq \min_{i\in K}\bfrak(\Rbf_i)$ and $\sup_{i\in K}\dfrak(\Rbf_i) = \dfrak(\bigvee \vec\Rbf) \leq \dfrak(\prod \vec\Rbf) \leq \dfrak(\vec\Rbf^;)\leq \prod_{i\in K}\dfrak(\Rbf_i)$. So, to conclude~\ref{a251c}--\ref{a251d} and part of~\ref{a251e}, it is enough to show that $\min_{i\in K}\bfrak(\Rbf_i)\leq \bfrak(\vec\Rbf^;)$. 
    Let $F\subseteq \prod_{\alpha<\gamma}{}^{\prod_{i<\alpha}Y_i}X_\alpha$ be a set of size ${<}\min_{\alpha<\gamma}\bfrak(\Rbf_\alpha)$. Find $y_\alpha\in Y_\alpha$ by recursion on $\alpha<\gamma$ such that $f_\alpha(y\frestr\alpha)\sqsubset_\alpha y_\alpha$ for all $\bar f = \seq{f_\alpha}{\alpha<\gamma} \in F$. This is fine because $|F|< \min_{\alpha<\gamma}\bfrak(\Rbf_\alpha)$. Therefore, $F$ is $\vec\Rbf^;$-bounded by $y:=\seq{y_\alpha}{\alpha<\gamma}$. 

    It remains to show the equality of the dominating numbers in~\ref{a251e} when $K$ is finite. Since sequential composition is associative (modulo Tukey-equivalence), it is enough to assume that $K=\{0,1\}$. So let $D\subseteq Y_0\times Y_1$ of size ${<}\dfrak(\Rbf_0)\cdot\dfrak(\Rbf_1)$. Set $D_0$ as the set of $y_0\in Y_0$ such that $D_1^{y_0}:=\set{y_1}{(y_0,y_1)\in D}$ is $\Rbf_1$-dominating. Then $D_0$ is not $\Rbf_0$-dominating, otherwise $|D_0|\geq \dfrak(\Rbf_0)$ and, since $|D^{y_0}_1|\geq \dfrak(\Rbf_1)$ for $y_0\in D_0$, we would get that $|D|\geq \dfrak(\Rbf_0)\cdot\dfrak(\Rbf_1)$, a contradiction.

    Hence, there is some $x_0\in X_0$ such that $x_0\not\sqsubset_0 y_0$ for all $y_0\in D_0$. Define $f_0\colon\{\emptyset\}\to X_0$ such that $f_0(\emptyset):=x_0$ and let $f_1\colon Y_0\to X_1$ be any map such that, whenever $y_0\notin D_0$,  $f_1(y_0)\not\sqsubset_1 y_1$ for any $y_1\in D^{y_0}_1$. We then get that $\bar f:=(f_0,f_1) \not\sqsubset^; (y_0,y_1)$ for all $(y_0,y_1)\in D$. 
\end{proof}


\begin{remark}
    The equality of the $\dfrak$-number in \autoref{products}~\ref{a251e} does not hold in general when $K$ is infinite. Consider the case when $K=\omega$ and $\Rbf_i = \la \omega,\neq\ra$ for $i<\omega$. Then $\vec\Rbf^;\eqT \prod \vec\Rbf$ and $\dfrak(\prod \vec\Rbf) = \cov(\Mwf)$ (this is a consequence of the well-known Bartoszy\'nski's and Miller's characterization of $\cov(\Mwf)$, see e.g.~\cite[Sec.~5]{CM23}), but $\prod_{i<\omega}\dfrak(\Rbf_i) = \cfrak$. 

    To see this Tukey equivalence, notice that $\prod \vec\Rbf = \la \baire, \neq^\bullet\ra$ where $x\neq^\bullet y$ means that $x(n)\neq y(n)$ for all $n<\omega$, while $\vec\Rbf^; =\la {}^{{}^{<\omega}\omega} \omega, \baire, \neq^\bullet\ra $ where we interpret $\varphi \neq^\bullet y$ as $\varphi(y\frestr n) \neq y(n)$ for all $n<\omega$. It is clear that $\prod \vec\Rbf \eqT \la {}^{{}^{<\omega}\omega} \omega, \neq^\bullet\ra$ where $\varphi \neq^\bullet \psi$ is interpreted as $\varphi(s) \neq \psi(s)$ for all $s\in{}^{<\omega}\omega$. 

    It is enough to show that $\vec\Rbf^; \leqT \la {}^{{}^{<\omega}\omega} \omega, \neq^\bullet\ra$. Let $F$ be the identity function on ${}^{{}^{<\omega}\omega}\omega$ and define $F'\colon {}^{{}^{<\omega}\omega} \omega \to \baire$ such that $F'(\psi):= y_\psi$ is defined recursively by $y_\psi(n):= \psi(y_\psi\frestr n)$. It is clear that $(F,F')$ is the required Tukey connection. 

    Another example is $\Rbf_i:=\la \omega,\leq \ra$ for $i<\omega$, where $x\leq y$ means $x(n)\leq y(n)$ for all $n<\omega$, where we also get $\vec \Rbf^; \eqT \prod \vec\Rbf$, whose $\dfrak$-number is $\dfrak$.
\end{remark}

The following result is a useful trick to produce Tukey connections.

\begin{lemma}[cf.~{\cite[4.1.6]{Vojtas}}]\label{a255}
    For $i\in \{0,1\}$, let $\Rbf_i=\la X_i,Y_i,\sqsubset_i\ra$ be a relational system, and let $\Rbf=\la S,Y,\sqsubset\ra$ be a relational system such that $S$ is directed and satisfying that, for $s,s'\in S$ and $y\in Y$, if $s'\sqsubset y$ and $s\leq_S s'$ then $s\sqsubset y$.

    If $\Rbf_0\leqT \Rbf$ and $\Rbf_1\leqT \Rbf$, then $\Rbf_0\times \Rbf_1\leqT \Rbf$.
\end{lemma}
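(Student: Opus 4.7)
The plan is to combine the two hypothesized Tukey connections into one, using the directedness of $S$ to merge the ``left'' components and the monotonicity hypothesis to transfer the relation back through that merge. No clever construction should be required beyond this, so I expect the main subtlety to be only the (mild) use of choice in picking upper bounds in $S$.

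First I would fix Tukey connections $(\Psi^0_-,\Psi^0_+)$ and $(\Psi^1_-,\Psi^1_+)$ witnessing $\Rbf_0\leqT\Rbf$ and $\Rbf_1\leqT\Rbf$, respectively, so that $\Psi^i_-\colon X_i\to S$ and $\Psi^i_+\colon Y\to Y_i$. For each pair $(x_0,x_1)\in X_0\times X_1$, the directedness of $S$ provides some $s\in S$ with $\Psi^0_-(x_0)\leq_S s$ and $\Psi^1_-(x_1)\leq_S s$; using the axiom of choice, I would select one such $s$ and call it $\Phi_-(x_0,x_1)$. On the right-hand side I would simply put
\[\Phi_+(y):=(\Psi^0_+(y),\Psi^1_+(y)) \in Y_0\times Y_1.\]

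Then I would verify that $(\Phi_-,\Phi_+)$ is a Tukey connection from $\Rbf_0\times\Rbf_1$ into $\Rbf$. Assume $\Phi_-(x_0,x_1)\sqsubset y$. By the monotonicity hypothesis on $\Rbf$, since $\Psi^i_-(x_i)\leq_S\Phi_-(x_0,x_1)$ for $i\in\{0,1\}$, we get $\Psi^i_-(x_i)\sqsubset y$ for both $i$. Applying the Tukey property of each $(\Psi^i_-,\Psi^i_+)$ yields $x_i\sqsubset_i\Psi^i_+(y)$ for $i\in\{0,1\}$, which is exactly $(x_0,x_1)\sqsubset^\otimes\Phi_+(y)$ by definition of the product relation in \autoref{def:prodrel}.

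The only step with any content is the use of the monotonicity hypothesis on $\Rbf$, which is precisely what lets us replace the upper bound $\Phi_-(x_0,x_1)$ by the individual $\Psi^i_-(x_i)$ in the relation $\sqsubset$; without it, the merging trick on the left would have no way to propagate back down. Since both $\Rbf_0$ and $\Rbf_1$ feed into the \emph{same} target $\Rbf$, there is no need to iterate or compose beyond two factors, so the argument is complete once this verification is done.
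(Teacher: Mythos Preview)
Your proof is correct and follows essentially the same approach as the paper: both fix Tukey connections for each factor, use directedness of $S$ to pick a common upper bound for the left map, and define the right map as the pair $(\Psi^0_+(y),\Psi^1_+(y))$. Your write-up is in fact more complete, since you spell out the verification via the monotonicity hypothesis, which the paper leaves implicit.
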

\begin{proof}
    For $i\in\{0,1\}$, consider the Tukey connections $\Psi^i_-\colon X_i\to S$ and $\Psi^i_+\colon Y\to Y_i$ for $\Rbf_i\leqT \Rbf$. Define $\Psi_-\colon X_0\times X_1\to S$ such that $\Psi_-(x_0,x_1)$ is above $\Psi^0_-(x_0)$ and $\Psi^1_-(x_1)$ in $S$, and define $\Psi_+\colon Y\to Y_0\times Y_1$ by $\Psi_+(y):=(\Psi^0_+(y_0),\Psi^1_+(y_1))$. 
\end{proof}

Powers of ideals can be introduced using the product of relational systems in the following ways.

\begin{definition}[{\cite[Def.~3.14]{CM23}}]\label{def:a16}
Given an ideal $\Iwf$ on $X$ and a set $w$, define $\Iwf^{(w)}$ as the ideal on ${}^w X$ generated by the sets of the form $\prod_{i\in w}A_i$ with $\seq{A_i}{i\in w}\in{}^w \Iwf$. Denote $\add(\Iwf^w):=\bfrak(\Iwf^{(w)})$, $\cof(\Iwf^w):=\dfrak(\Iwf^{(w)})$, $\non(\Iwf^w):=\bfrak(\Cv_{\Iwf^{(w)}})$ and $\cov(\Iwf^w):=\dfrak(\Cv_{\Iwf^{(w)}})$.
\end{definition}

\begin{fact}[{\cite[Fact~3.15]{CM23}}]\label{fct:a2}
Let $w$ be a set and let $\Iwf$ be an ideal on $X$. Then:
\begin{enumerate}[label=\rm (\alph*)]
    \item $\Iwf^w\eqT\Iwf^{(w)}$. 
    
    \item $\Cv_\Iwf^w\eqT\Cv_{\Iwf^{(w)}}$.
    
    \item $\add(\Iwf^w)=\add(\Iwf)$ and $\non(\Iwf^w)=\non(\Iwf)$.
    
    \item $\cov(\Iwf)\leq\cov(\Iwf^w)\leq \cov(\Iwf)^{|w|}$ and $\cof(\Iwf)\leq\cof(\Iwf^w)\leq \cof(\Iwf)^{|w|}$.
\end{enumerate}
\end{fact}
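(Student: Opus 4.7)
The plan is to establish (a) and (b) first via explicit Tukey connections, and then to deduce (c) and (d) as immediate consequences, combining (a) and (b) with the cardinal estimates for product relational systems recorded in \autoref{products}~\ref{a251d}.

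For (a), I fix some $x^\ast\in X$ (so $\{x^\ast\}\in\Iwf$) and define two maps. Let $P\colon \prod_{i\in w}\Iwf\to \Iwf^{(w)}$ by $P(\bar A):=\prod_{i\in w}(A_i\cup\{x^\ast\})$, which lies in $\Iwf^{(w)}$ since each factor belongs to $\Iwf$. Next, using that every $B\in\Iwf^{(w)}$ is contained in a finite union $\bigcup_{k<n}\prod_{i\in w}C^k_i$ of products with $C^k_i\in\Iwf$, and that such a union is itself contained in the single product $\prod_{i\in w}B_i$ where $B_i:=\bigcup_{k<n}C^k_i\in\Iwf$, I pick (using choice) for each $B\in\Iwf^{(w)}$ a sequence $Q(B)=\seq{B_i}{i\in w}\in \prod_{i\in w}\Iwf$ with $B\subseteq \prod_{i\in w}B_i$. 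Now if $P(\bar A)\subseteq B$, then since $P(\bar A)\neq\emptyset$, coordinate-wise projection gives $A_i\cup\{x^\ast\}\subseteq B_i$ for every $i\in w$, so $\bar A\sqsubset^\otimes Q(B)$. This shows $\Iwf^w\leqT\Iwf^{(w)}$ via $(P,Q)$. Symmetrically, if $Q(B)\sqsubset^\otimes \bar A$, then $B\subseteq \prod_{i\in w}B_i\subseteq \prod_{i\in w}A_i\subseteq P(\bar A)$, yielding $\Iwf^{(w)}\leqT\Iwf^w$ via $(Q,P)$.

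For (b), both $\Cv_\Iwf^w=\prod_{i\in w}\Cv_\Iwf$ and $\Cv_{\Iwf^{(w)}}$ have left set $X^w$, so one direction uses $(\id_{X^w},Q)$ with $Q$ as above, and the other uses $(\id_{X^w},P')$ with $P'(\bar A):=\prod_{i\in w}A_i$. The two verifications reduce to the observations that $x\in B\subseteq \prod_{i\in w}B_i$ implies $x(i)\in B_i$ for every $i\in w$, and that $x(i)\in A_i$ for every $i\in w$ implies $x\in \prod_{i\in w}A_i$.

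For (c) and (d), by (a) and (b) we have $\bfrak(\Iwf^{(w)})=\bfrak(\prod_{i\in w}\Iwf)$, $\dfrak(\Iwf^{(w)})=\dfrak(\prod_{i\in w}\Iwf)$, and analogously for the $\Cv$ relational systems, so the definitions at the end of \autoref{def:a16} give $\add(\Iwf^w)=\bfrak(\prod_{i\in w}\Iwf)$, $\non(\Iwf^w)=\bfrak(\prod_{i\in w}\Cv_\Iwf)$, and similarly $\cof(\Iwf^w)=\dfrak(\prod_{i\in w}\Iwf)$ and $\cov(\Iwf^w)=\dfrak(\prod_{i\in w}\Cv_\Iwf)$. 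Applying \autoref{products}~\ref{a251d} to the constant sequence of $\Iwf$'s, respectively $\Cv_\Iwf$'s, yields $\add(\Iwf^w)=\min_{i\in w}\add(\Iwf)=\add(\Iwf)$ and $\non(\Iwf^w)=\non(\Iwf)$; the sandwich $\sup_i\dfrak(\Rbf_i)\leq\dfrak(\prod\vec\Rbf)\leq\prod_i\dfrak(\Rbf_i)$ there produces both chains of inequalities in (d). The main subtlety is the empty-coordinate issue in (a): without the singleton-inflation in the definition of $P$, an empty $A_j$ would make $\prod_i A_i=\emptyset$ contained in every $B\in\Iwf^{(w)}$, and the implication $P(\bar A)\subseteq B\Rightarrow \bar A\sqsubset^\otimes Q(B)$ would collapse; enlarging each $A_i$ by $\{x^\ast\}$ avoids this cleanly.
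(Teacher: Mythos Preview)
Your proof is correct. The paper states this result as a cited Fact without proof, so there is no paper proof to compare against; your explicit Tukey connections and the appeal to \autoref{products}~\ref{a251d} constitute exactly the expected verification, and your singleton-inflation trick in~(a) correctly handles the empty-factor obstruction.
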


We can prove \autoref{thm:a10} in terms of relational systems and Tukey connections. The following result is the starting point to deal with the covering and uniformity numbers. 

\begin{theorem}\label{lem:c0}
Let $X$ be a set, $S$ a directed preorder and $D\subseteq S$ cofinal in $S$.  Assume that $\vec\Iwf = \seq{\Iwf_s}{s\in S}$ is a directed scheme of ideals on $X$. Then
\begin{enumerate}[label=\rm(\arabic*)]
    \item\label{lem:c0:1} If $s\leq s'$, then $\Cv_{\Iwf_s}\leqT\Cv_{\Iwf_{s'}}\leqT \Cv_{\Iwf^*}$. 
    \item\label{lem:c0:1.5} $\Cv_{\Iwf^*}\leqT \prod_{s\in D}\Cv_{\Iwf_s}$.
    \item\label{lem:c0:2} $\supcov(\vec{\Iwf})\leq\cov(\Iwf^*) \leq \dfrak\left(\prod_{s\in D}\Cv_{\Iwf_s}\right)$ and $\non(\Iwf^*)=\minnon(\vec{\Iwf})$
\end{enumerate}
\end{theorem}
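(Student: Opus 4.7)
The plan is to handle parts~\ref{lem:c0:1}--\ref{lem:c0:2} in order, with the genuine content concentrated in part~\ref{lem:c0:1.5}; the rest is bookkeeping from the monotonicity $\Iwf_{s'}\subseteq \Iwf_s$ for $s\leq s'$ together with $\Iwf^*=\bigcap_{t\in S}\Iwf_t$. For part~\ref{lem:c0:1} I would just appeal to \autoref{ex:c4}\ref{ex:c4:0}: by the definition of a directed scheme, $s\leq s'$ gives $\Iwf_{s'}\subseteq \Iwf_s$, so $\Cv_{\Iwf_s}\leqT \Cv_{\Iwf_{s'}}$; and since $\Iwf^*\subseteq \Iwf_{s'}$ by construction, $\Cv_{\Iwf_{s'}}\leqT \Cv_{\Iwf^*}$ as well.

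For part~\ref{lem:c0:1.5} I would exhibit an explicit Tukey connection $(\Psi_-,\Psi_+)$ from $\Cv_{\Iwf^*}$ into $\prod_{s\in D}\Cv_{\Iwf_s}$. Take $\Psi_-\colon X\to X^D$ to be the diagonal map $x\mapsto \seq{x}{s\in D}$, and $\Psi_+\colon \prod_{s\in D}\Iwf_s\to \Iwf^*$ to send $\bar A=\seq{A_s}{s\in D}$ to the intersection $\bigcap_{s\in D}A_s$. The one verification that needs the hypotheses is that this intersection really lies in $\Iwf^*$: given any $t\in S$, cofinality of $D$ produces $s\in D$ with $t\leq s$, so $A_s\in \Iwf_s\subseteq \Iwf_t$ by monotonicity of the scheme, and $\bigcap_{s\in D}A_s\subseteq A_s$ lies in $\Iwf_t$ as well. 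Running this over all $t\in S$ places the intersection in $\Iwf^*$. The Tukey property is then immediate: $\Psi_-(x)$ meets $\bar A$ componentwise exactly when $x\in A_s$ for every $s\in D$, i.e.\ when $x\in \Psi_+(\bar A)$.

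Part~\ref{lem:c0:2} is then a formal consequence. From \ref{lem:c0:1}, $\cov(\Iwf_s)\leq \cov(\Iwf^*)$ and $\non(\Iwf^*)\leq \non(\Iwf_s)$ for every $s\in S$, which, upon taking supremum and infimum, yields $\supcov(\vec\Iwf)\leq \cov(\Iwf^*)$ and $\non(\Iwf^*)\leq \minnon(\vec\Iwf)$; the upper bound $\cov(\Iwf^*)\leq \dfrak(\prod_{s\in D}\Cv_{\Iwf_s})$ is just part~\ref{lem:c0:1.5} applied to $\dfrak$. For the remaining inequality $\minnon(\vec\Iwf)\leq \non(\Iwf^*)$, I would unfold definitions: if $A\subseteq X$ witnesses $A\notin \Iwf^*$, then $A\notin \Iwf_s$ for some $s\in S$, so $|A|\geq \non(\Iwf_s)\geq \minnon(\vec\Iwf)$, and taking the infimum over such $A$ gives the bound. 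The only step with any real content is the verification in part~\ref{lem:c0:1.5} that $\Psi_+(\bar A)\in\Iwf^*$, which is precisely where both the cofinality of $D$ in $S$ and the monotonicity of $\vec\Iwf$ are used.
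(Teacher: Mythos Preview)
Your proposal is correct and follows essentially the same approach as the paper: part~\ref{lem:c0:1} via \autoref{ex:c4}\ref{ex:c4:0}, part~\ref{lem:c0:1.5} via the diagonal/intersection Tukey connection (with the same key verification that $\bigcap_{s\in D}A_s\in\Iwf^*$ by cofinality of $D$), and part~\ref{lem:c0:2} as a formal consequence. The only minor difference is that for the inequality $\minnon(\vec\Iwf)\leq\non(\Iwf^*)$ the paper cites \autoref{products}\ref{a251d} (applied to part~\ref{lem:c0:1.5}), whereas you unfold definitions directly; both are immediate.
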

\begin{proof}
\ref{lem:c0:1} follows directly by \autoref{ex:c4}~\ref{ex:c4:0}, and~\ref{lem:c0:2} is a consequence of~\ref{lem:c0:1},~\ref{lem:c0:1.5} and \autoref{products}~\ref{a251d}. 
So it remains to show~\ref{lem:c0:1.5}. Define the functions $\Psi_-:X\to\prod_{s\in D}X$ and $\Psi_+:\prod_{s\in D}\Iwf_s\to\Iwf^*$ where $\Psi_-(x)$ is the constant sequence with value $x$, and $\Psi_+(\seq{A_s}{ s\in D}):=\bigcap_{s\in D}A_s$, which is in $\Iwf^*$ because $D$ is cofinal in $S$. These maps form the desired Tukey connection.
\end{proof}

We now look at the additivity and cofinality numbers. 

\begin{theorem}\label{thm:c1}
Let $\vec\Iwf = \seq{\Iwf_s}{s\in S}$ be a directed scheme of ideals on $X$.  Then, for any cofinal $D\subseteq S$, 
\[\Iwf^*\leqT\prod_{s\in D}\Iwf_s\leqT\prod_{s\in D}\Cv_{[\cof(\Iwf_s)]^{<\add(\Iwf_s)}}\leqT\Cv^{D}_{[\cof(\vec\Iwf)]^{<\minadd(\vec\Iwf)}}.\] 
As a consequence, 
\[\minadd(\vec{\Iwf})\leq\add(\Iwf^*)\] 
and, whenever $D$ is a witness of $\cof(S)$, 
\[\cof(\Iwf^*)\leq\dfrak\left(\prod_{s\in D}\Iwf_s\right)\leq\cov\left(\left([\supcof(\vec\Iwf)]^{<\minadd(\vec\Iwf)}\right)^{\cof(S)}\right).\]
\end{theorem}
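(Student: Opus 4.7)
The plan is to establish the three Tukey reductions in the stated chain one arrow at a time, and then read off both cardinal inequalities from \autoref{products} and \autoref{fct:a2}. No single step looks genuinely hard; the work is mainly bookkeeping with the coordinatewise monotonicity of the product of relational systems.

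For the first arrow $\Iwf^*\leqT\prod_{s\in D}\Iwf_s$, I would define $\Psi_-(A):=\seq{A}{s\in D}$, which makes sense because $A\in\Iwf^*\subseteq\Iwf_s$ for each $s\in D$, and $\Psi_+(\seq{B_s}{s\in D}):=\bigcap_{s\in D}B_s$. The one thing to check is that this intersection lies in $\Iwf^*$: given any $t\in S$, cofinality of $D$ yields $s\in D$ with $t\leq s$, so $B_s\in\Iwf_s\subseteq\Iwf_t$ and the intersection is contained in $B_s$. The Tukey property is then immediate, since if $\seq{A}{s\in D}$ is coordinatewise below $\seq{B_s}{s\in D}$ then $A\subseteq\bigcap_{s\in D}B_s$.

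For the middle arrow, \autoref{ex:c4}\,\ref{ex:c4:4} applied to each directed preorder $\la\Iwf_s,\subseteq\ra$ gives $\Iwf_s\leqT\Cv_{[\cof(\Iwf_s)]^{<\add(\Iwf_s)}}$, and I would assemble these connections coordinatewise into a Tukey connection between the two products (a routine monotonicity of $\prod$ that falls straight out of \autoref{def:prodrel}). For the last arrow, since $\minadd(\vec\Iwf)\leq\add(\Iwf_s)\leq\cof(\Iwf_s)\leq\supcof(\vec\Iwf)$ for every $s\in D$, \autoref{ex:c4}\,\ref{ex:c4:2} yields $\Cv_{[\cof(\Iwf_s)]^{<\add(\Iwf_s)}}\leqT\Cv_{[\supcof(\vec\Iwf)]^{<\minadd(\vec\Iwf)}}$ for each such $s$, and the coordinatewise product of these connections produces the claimed reduction into $\Cv^{D}_{[\supcof(\vec\Iwf)]^{<\minadd(\vec\Iwf)}}$.

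The two cardinal inequalities then follow formally. The first Tukey reduction together with \autoref{products} gives $\add(\Iwf^*)=\bfrak(\Iwf^*)\geq\bfrak\bigl(\prod_{s\in D}\Iwf_s\bigr)=\min_{s\in D}\add(\Iwf_s)\geq\minadd(\vec\Iwf)$. For the cofinality bound, the entire chain produces $\cof(\Iwf^*)\leq\dfrak\bigl(\prod_{s\in D}\Iwf_s\bigr)\leq\dfrak\bigl(\Cv^{D}_{[\supcof(\vec\Iwf)]^{<\minadd(\vec\Iwf)}}\bigr)$, and \autoref{fct:a2} together with $|D|=\cof(S)$ converts this last dominating number into $\cov\bigl(([\supcof(\vec\Iwf)]^{<\minadd(\vec\Iwf)})^{\cof(S)}\bigr)$, the stated upper bound. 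If any step turns out to need extra care, I expect it to be the translation between $\Cv^{D}_{\mathcal{K}}$ and the covering number of the product ideal via \autoref{fct:a2}, but this is purely a notational matter.
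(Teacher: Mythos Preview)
Your proposal is correct and follows essentially the same route as the paper: the first Tukey reduction is obtained via the constant-sequence map and the intersection map (using cofinality of $D$ to land in $\Iwf^*$), and the remaining two arrows are assembled coordinatewise from \autoref{ex:c4}\,\ref{ex:c4:4} and~\ref{ex:c4:2}, exactly as the paper does. Your derivation of the cardinal consequences from \autoref{products} and \autoref{fct:a2} is more explicit than the paper's, but the argument is identical in substance.
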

\begin{proof}
The inequality $\Iwf^*\leqT\prod_{s\in D}\Iwf_s$ is easy to show: define $\Psi_-:\Iwf^*\to\prod_{s\in D}\Iwf_s$ such that $\Psi_-(A)$ is the constant sequence with value $A$, and define $\Psi_+:\prod_{s\in D}\Iwf_s\to\Iwf^*$ such that $\Psi_+(\seq{A_s}{s\in D}):=\bigcap_{s\in D}A_s$, which is in $\Iwf^*$ because $D$ is cofinal in $S$. Note that $(\Psi_-,\Psi_+)$ is the required Tukey connection.

On the other hand, using ~\ref{ex:c4:2} and~\ref{ex:c4:4} of~\autoref{ex:c4}, we obtain 
\[\prod_{s\in D}\Iwf_s\leqT\prod_{s\in D}\Cv_{[\cof(\Iwf_s)]^{<\add(\Iwf_s)}}\leqT\Cv^{D}_{[\supcof(\vec\Iwf)]^{<\minadd(\vec\Iwf)}}.\qedhere\]
\end{proof}

\autoref{thm:a10} is a direct consequence of \autoref{lem:c0} and \autoref{thm:c1}.

\section{Directed scheme for the meager additive ideal}\label{sec:s2}

The purpose of this section is to introduce the directed scheme of ideals $\seq{\Mwf_I}{I\in\Ior}$, which reformulates $\MAwf = \bigcap\set{\Mwf_I}{I\in\Ior}$, and study these ideals. In particular, we prove~\autoref{thm:a4} and \ref{thm:a6}.

We start with reviewing the following cofinal family of the meager ideal. 

\begin{definition}\label{def:b0}
Let $I\in\Ior$ and $y\in\cantor$. Define 
\[B_{y,I}:=\set{x\in\cantor}{\forall^\infty n\in\omega\colon x{\upharpoonright}I_n\neq y{\upharpoonright}I_n}.\]
\end{definition}

A pair $(y,I)\in 2^\omega\times \Ior$ is known as a \emph{chopped real}, and these are used to produce a cofinal family of meager sets.
It is clear that $B_{y,I}$ is a meager subset 
of $\cantor$ (see e.g.~\cite{blass}).

\begin{theorem}[{Talagrand~\cite{Tal98}}, see e.g.~{\cite[Prop.~13]{BWS}}]\label{thm:b0}
For every meager set $F\subseteq\cantor$ and $I\in\Ior$ there are $y\in\cantor$ and $I'\in\Ior$ such that $F\subseteq B_{y,I'}$ and each $I'_n$ is the union of finitely many $I_k$'s.
\end{theorem}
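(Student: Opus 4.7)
The plan is to reduce $F$ to an increasing union of closed nowhere dense sets and then build $y$ and $I'$ simultaneously by recursion, using the nowhere-density at each stage to knock out a whole finite family of cylinders at once.

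More precisely, I would first replace $F$ by an $F_\sigma$-meager set containing it and write $F \subseteq \bigcup_{n<\omega} F_n$ with $\seq{F_n}{n<\omega}$ an increasing sequence of closed nowhere dense subsets of $\cantor$. I then construct $y \in \cantor$ and an increasing sequence $0 = k_0 < k_1 < \cdots$ of naturals by recursion on $n$, and set $I'_n := \bigcup_{k_n \leq j < k_{n+1}} I_j$, which is automatically a union of finitely many consecutive members of $I$. Writing $m_n := |\bigcup_{j < k_n} I_j|$, I maintain at stage $n$ that $y \restriction m_n$ and $k_n$ are already fixed, and my target for the stage is to choose $k_{n+1} > k_n$ and the values $y \restriction [m_n, m_{n+1})$ so that
\[
[s \frown (y \restriction I'_n)] \cap F_n = \emptyset \quad \text{for every } s \in {}^{m_n}2.
\]
If this target is met at every stage, then for $x \in F$ and $n_0$ so large that $x \in F_{n_0}$, one has $x \in F_n$ for all $n \geq n_0$, and the target forces $x\restriction I'_n \neq y\restriction I'_n$ for all $n \geq n_0$, i.e.\ $x \in B_{y,I'}$.

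For the stage construction, I enumerate ${}^{m_n}2 = \{s_1,\ldots,s_N\}$ and build a nested chain of finite strings $\emptyset = v^0 \sqsubseteq v^1 \sqsubseteq \cdots \sqsubseteq v^N$ such that $[s_j \frown v^j] \cap F_n = \emptyset$ at step $j$. At step $j$ I apply nowhere-density of $F_n$ to the string $s_j \frown v^{j-1}$ to get an extension $w_j$ of $s_j \frown v^{j-1}$ with $[w_j] \cap F_n = \emptyset$, and I let $v^j$ be $v^{j-1}$ followed by the bits of $w_j$ appearing after position $m_n + \lh(v^{j-1})$. The previously established disjointness relations $[s_i \frown v^{i}] \cap F_n = \emptyset$ for $i < j$ are preserved automatically because $[s_i \frown v^{j}] \subseteq [s_i \frown v^{i}]$. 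With $v := v^N$ in hand, I pick $k_{n+1}$ to be the least $k > k_n$ with $|\bigcup_{k_n \leq \ell < k} I_\ell| \geq \lh(v)$, and define $y\restriction [m_n, m_{n+1})$ to be $v$ padded by arbitrary bits (say $0$'s); padding only shrinks cylinders, so the disjointness from $F_n$ is preserved.

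The main obstacle is exactly the simultaneous avoidance requirement at stage $n$: a single string $v$ must witness disjointness from $F_n$ after prefixing by \emph{every} $s \in {}^{m_n}2$, not just one. This is what forces the inductive alignment above, and it is also what forces $I'$ to be coarse enough to accommodate $\lh(v)$ as a union of $I_j$'s; the rest is routine bookkeeping, and the verification that $F \subseteq B_{y,I'}$ follows immediately from the stage-target once the recursion terminates.
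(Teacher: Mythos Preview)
Your proof is correct and is essentially the standard argument for this classical result. Note, however, that the paper does not actually supply a proof of this theorem: it is stated with attribution to Talagrand and a pointer to \cite[Prop.~13]{BWS}, and used as a black box. Your construction---writing $F$ as an increasing union of closed nowhere dense sets, then at stage $n$ iterating over all $s\in{}^{m_n}2$ and using nowhere-density to find a single tail $v$ that kills every cylinder $[s\frown v]$ for $F_n$, finally padding $v$ out to a union of $I_k$'s---is exactly the proof one finds in the cited sources, so there is nothing to compare.
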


Inspired by~\eqref{for:ao} let us introduce a new ideal related to $\MAwf$ as follows.

\begin{definition}\label{def:b2}
 Fix $I\in\Ior$. For $J\in\Ior$ and $y\in\cantor$, denote 
\[A_{J,y}^I:=\set{x\in\cantor}{\forall^\infty n<\omega\, \exists k<\omega\colon I_k\subseteq J_n\text{\ and\ }x{\upharpoonright}I_k=y{\upharpoonright}I_k},\]
which is an $F_\sigma$-meager set, and define \[\Mwf_I:=\set{X\subseteq\cantor}{\exists(J,y)\in\Ior\times\cantor\colon X\subseteq A_{J,y}^I}.\]
Also denote $\vec\Mwf = \seq{\Mwf_I}{I\in\Ior}$. 
\end{definition}

By \autoref{thm:a0}~\ref{thm:a0:MA}, it is clear that $\MAwf=\bigcap_{I\in\Ior}\Mwf_I$, but we will offer a proof in \autoref{cor:b4}. To make sense of the previous definition and conclude that $\vec\Mwf$ is a directed scheme of ideals, 
we aim to show that $\Mwf_I$ is a $\sigma$-ideal and $\Mwf_{I'}\subseteq \Mwf_I$ whenever $I\sqsubseteq I'$ in $\Ior$. The former requires some work, for which we introduce the following directed preorder.

\begin{definition}\label{def:b1}
Fix $I\in\Ior$. For $J, J'\in\Ior$ and $y, y'\in\cantor$ define the relation $(J,y)\sqsubseteq^I(J',y')$ iff either $I\not\sqsubseteq J$ or, for all but finitely many $n<\omega$, there is some $\ell<\omega$ satisfying the following conditions:
\begin{enumerate}[label = \normalfont (\roman*)]
    \item $J_\ell$ contains some $I_k$, and
    \item for all $k<\omega$, $I_k\subseteq J_\ell$ implies $I_k\subseteq J'_n$ and $y{\upharpoonright}I_k=y'{\upharpoonright}I_k$.
\end{enumerate}
\end{definition}

Notice that $I\sqsubseteq J$ and $(J,y) \sqsubseteq^I (J', y')$ implies $I\sqsubseteq J'$.

\begin{lemma}\label{lem:b1.5}
$\la\Ior\times\cantor,\sqsubset^I\ra$  is a directed preorder and $\bfrak(\Ior\times\cantor,\sqsubseteq^I)$ is uncountable.
\end{lemma}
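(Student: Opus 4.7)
The plan is to verify reflexivity and transitivity of $\sqsubseteq^I$ and then prove the stronger statement that every countable subset of $\Ior\times\cantor$ is $\sqsubseteq^I$-bounded, which yields both directedness and uncountability of $\bfrak(\Ior\times\cantor,\sqsubseteq^I)$ at once.

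Reflexivity is immediate: if $I\not\sqsubseteq J$ the clause is vacuously satisfied, and otherwise, for all but finitely many $n$, the index $\ell:=n$ works because $J_n$ contains some $I_k$ (by $I\sqsubseteq J$) and both conditions $I_k\subseteq J_n$ and $y\restriction I_k = y\restriction I_k$ are trivial. For transitivity, suppose $(J^{(0)},y^{(0)}) \sqsubseteq^I (J^{(1)},y^{(1)}) \sqsubseteq^I (J^{(2)},y^{(2)})$; the case $I\not\sqsubseteq J^{(0)}$ is again trivial, so assume $I\sqsubseteq J^{(0)}$. The first relation forces $I\sqsubseteq J^{(1)}$, since for cofinitely many $n$ the witness $J^{(0)}_{\ell_n}$ contributes an $I_k\subseteq J^{(1)}_n$. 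Now for cofinitely many $m$, the second relation supplies $\ell'_m$ with $J^{(1)}_{\ell'_m}$ containing $I_k$'s, each satisfying $I_k\subseteq J^{(2)}_m$ and $y^{(1)}\restriction I_k = y^{(2)}\restriction I_k$. Since the $I_k$'s in $J^{(2)}_m$ lie arbitrarily far to the right as $m\to\infty$, we have $\ell'_m\to\infty$, so for large $m$ the first relation applies at $n=\ell'_m$, producing $\ell_n$ with $J^{(0)}_{\ell_n}$ whose $I_k$'s are inside $J^{(1)}_{\ell'_m}$ and match $y^{(0)}$ with $y^{(1)}$. Chaining gives $J^{(0)}_{\ell_n}$ as the desired witness for the composition.

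For uncountability of $\bfrak$, I will show that any countable family $\{(J^{(i)},y^{(i)}):i<\omega\}$ has a common upper bound $(J,y)$. Let $K:=\{i<\omega : I\sqsubseteq J^{(i)}\}$; indices outside $K$ need no attention, since the trivial clause handles them. Construct $J=\seq{J_n}{n<\omega}$, $J_n=[m_n,m_{n+1})$, recursively as follows. Having fixed $m_n$, for each $i\in K$ with $i\leq n$ (enumerated in order) choose the least index $\ell^i_n$ such that $J^{(i)}_{\ell^i_n}$ lies past the portion of $J_n$ already allocated and contains at least one $I_k$ (available because $I\sqsubseteq J^{(i)}$), and reserve that interval. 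Let $m_{n+1}$ be just past the last reserved interval. Define $y\in\cantor$ by setting $y\restriction I_k := y^{(i)}\restriction I_k$ whenever $I_k\subseteq J^{(i)}_{\ell^i_n}$ for some $n$ and $i\leq n$ with $i\in K$, and arbitrarily elsewhere; these definitions are consistent because the reserved intervals $J^{(i)}_{\ell^i_n}$ (across all $i,n$) are pairwise disjoint, so the sets of $I_k$'s they contain are pairwise disjoint.

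Verification: fix $i\in K$. For every $n\geq i$ the reserved interval $J^{(i)}_{\ell^i_n}\subseteq J_n$ contains some $I_k$, and every $I_k\subseteq J^{(i)}_{\ell^i_n}$ satisfies $I_k\subseteq J_n$ and $y\restriction I_k=y^{(i)}\restriction I_k$ by construction; hence $\ell:=\ell^i_n$ witnesses the $n$-th instance of $(J^{(i)},y^{(i)})\sqsubseteq^I(J,y)$ for cofinitely many $n$. The main obstacle is ensuring that the piecewise definition of $y$ never conflicts; this is resolved by exploiting a subtle feature of the preorder, namely that only one witnessing $\ell$ per $n$ is required, which allows us to pack the witnesses for distinct indices $i$ into disjoint sub-intervals of $J_n$ rather than trying to make a single $\ell$ serve several $y^{(i)}$'s simultaneously.
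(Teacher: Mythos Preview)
Your proof is correct and follows essentially the same approach as the paper: both reduce the lemma to showing that every countable family has a $\sqsubseteq^I$-upper bound, and both construct the bound $(J,y)$ by packing into each $J_n$ one disjoint witnessing interval $J^{(i)}_{\ell^i_n}$ (containing some $I_k$) for every $i\leq n$, then defining $y$ piecewise to agree with $y^{(i)}$ on the corresponding reserved interval. The paper omits the reflexivity and transitivity checks as ``easy'', whereas you spell them out, but the substantive construction is the same.
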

\begin{proof}
 It is easy to show that $\la\Ior\times\cantor,\sqsubseteq^I\ra$ is a preorder, so it is enough to prove that $\bfrak(\Ior\times\cantor,\sqsubseteq^I)$ is uncountable (which implies directedness). 
 Let $(J^m, y^m)\in \Ior\times\cantor$ for $m<\omega$ and, wlog, assume that $I\sqsubseteq J^m$ for all $m<\omega$. Construct a $J=\seq{J_n}{n<\omega}\in\Ior$ such that each $J_n$ contains one interval from $J^m$, say $J^m_{\ell_{n,m}}$, for each $m\leq n$, also demanding that $\set{J^m_{\ell_{n,m}}}{m\leq n}$ is pairwise disjoint and that each member of this set contains an interval from $I$. So we can define a $y\in\cantor$ such that $y{\restriction}J_{\ell_{m,n}}^m=y^m{\restriction}J_{\ell_{m,n}}^m$ for all $m\leq n<\omega$. Then  $(J^m,y^m)\sqsubset^I(J,y)$ for all $m<\omega$.
\end{proof}

\begin{mainlemma}\label{lem:b2}
Let $I, J, J'$ and $y, y'$ as in~\autoref{def:b1}. Then, we have that $A_{J,y}^I\subseteq A_{J',y'}^I$ iff $(J,y)\sqsubset^I(J',y')$.
\end{mainlemma}
\begin{proof}
$``\Leftarrow"$ Assume that $x\in A_{J,y}^{I}$, so $\forall^\infty n<\omega\, \exists k<\omega\colon I_k\subseteq J_n$ and $x{\upharpoonright}I_k=y{\upharpoonright}I_k$. Since $(J,y)\sqsubset^I (J',y')$, for large enough $n$ we can choose an $\ell<\omega$ such that 
\begin{enumerate}[label=\rm($\oplus_\arabic*$)]
    \item \label{lem:b2:a} $\exists k<\omega\colon I_k\subseteq J_\ell$ and $x{\restriction}I_k=y{\restriction}I_k$, and 
    \item \label{lem:b2:b} $\forall k<\omega\colon I_k\subseteq J_\ell\imp I_k\subseteq J'_n$ and $y{\restriction}I_k=y'{\restriction}I_k$.
\end{enumerate}
Observe that \ref{lem:b2:b} implies $I_k\subseteq J'_n$ and $x{\restriction}I_k=y'{\restriction}I_k$ for a $k$ as in~\ref{lem:b2:a}. Hence, $x\in A_{J',y'}^I$.

$``\Rightarrow"$ We prove the contrapositive. Assume $(J,y)\not\sqsubset^I (J',y')$, so $I\sqsubseteq J$, i.e.\ there is some $\ell_0<\omega$ such that $J_\ell$ contains some $I_k$ for all $\ell\geq \ell_0$, and there are infinitely many $n<\omega$ such that:
\begin{enumerate}[label = $(\star)$]
    \item\label{stpr} for all $\ell\geq \ell_0$ there is some $k<\omega$ such that $I_k\subseteq J_\ell$ and either $I_k\not\subseteq J'_n$ or $y{\restriction}I_{k}\neq y'{\restriction}I_k$.
\end{enumerate}
 Let $w$ be an infinite set of $n<\omega$ satisfying~\ref{stpr} and such that $|\set{n\in w}{J_\ell\cap J'_n\neq\emptyset}|\leq1$ for any $\ell<\omega$. Denote by $n_\ell$ the unique member of this set, in case it exists. 

Define $x\in\cantor$ at each $I_k$ as follows: First consider the case when $I_k\subseteq J_\ell$ for some $\ell\geq \ell_0$ (which is unique). In the case $n_\ell$ exists, there is some $k_\ell<\omega$ such that $I_{k_\ell}\subseteq J_\ell$ and either $I_{k_\ell}\not\subseteq J'_{n_\ell}$ or $y{\restriction}I_{k_\ell}\neq y'{\restriction}I_{k_\ell}$. Define $x{\restriction}I_{k}:=y{\restriction}I_{k}$ when $k=k_\ell$, otherwise $x{\restriction}I_{k}$ can be anything different from $y'{\restriction}I_k$; in the case that $n_\ell$ does not exist, set $x{\restriction}I_k:=y{\restriction}I_k$. On the other hand, in the case that $I_k$ is not contained in any $J_\ell$ for $\ell\geq \ell_0$, let $x{\restriction}I_k$ be anything different from $y'{\restriction} I_k$.

Firstly, it is clear that $x\in A_{J, y}^I$. On the other hand, $\forall n\in w\,\forall k<\omega\colon I_k\subseteq J'_n\imp x{\restriction}I_k\neq y'{\restriction}I_k$, i.e, $x\not\in A_{J', y'}^I$. Indeed, for $n\in w$ and $k<\omega$ such that $I_k\subseteq J'_n$, in case $I_k\subseteq J_\ell$ for some $\ell\geq\ell_0$, we have $n=n_\ell$ and, if $k=k_\ell$, then $x{\restriction}I_{k_\ell}=y{\restriction}I_{k_\ell}\neq y'{\restriction}I_{k_\ell}$, else $x{\restriction}I_k\neq y'{\restriction}I_k$ by the definition of $x{\restriction}I_k$; otherwise, $I_k$ is not contained in any $J_\ell$ for $\ell\geq \ell_0$, in which case $x{\restriction}I_k\neq y'{\restriction}I_k$.
\end{proof}

As a direct consequence of \autoref{lem:b2}, not only do we have that any $\Mwf_I$ is a $\sigma$-ideal but also that $\Mwf_I$ is Tukey-equivalent to a clean directed preorder. 

\begin{theorem}\label{cor:b0}
For $I\in\Ior$, $\Mwf_I$ is a $\sigma$-ideal and $\Mwf_I\eqT\la\Ior\times\cantor,\sqsubset^I\ra$.
\end{theorem}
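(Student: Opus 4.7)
The plan is to leverage \autoref{lem:b2} (the Main Lemma) and \autoref{lem:b1.5} to handle both parts with essentially the same choice of maps.

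First, I will verify that $\Mwf_I$ is a $\sigma$-ideal. Closure under subsets and the fact that $\emptyset \in \Mwf_I$ are immediate from the definition. Properness is also clear because every $A^I_{J,y}$ is meager (indeed $F_\sigma$-meager), so it cannot cover $\cantor$. For countable unions, suppose $\seq{X_n}{n<\omega}\subseteq\Mwf_I$ with witnesses $(J_n,y_n)\in\Ior\times\cantor$, i.e., $X_n\subseteq A^I_{J_n,y_n}$. Since $\bfrak(\Ior\times\cantor,\sqsubseteq^I)$ is uncountable by \autoref{lem:b1.5}, the countable family $\set{(J_n,y_n)}{n<\omega}$ is $\sqsubseteq^I$-bounded by some $(J,y)$. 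Then \autoref{lem:b2} gives $A^I_{J_n,y_n}\subseteq A^I_{J,y}$ for every $n$, hence $\bigcup_{n<\omega}X_n\subseteq A^I_{J,y}\in\Mwf_I$.

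For the Tukey equivalence, fix (by choice) for each $X\in\Mwf_I$ a pair $(J_X,y_X)\in\Ior\times\cantor$ with $X\subseteq A^I_{J_X,y_X}$. To see $\Mwf_I\leqT\la\Ior\times\cantor,\sqsubseteq^I\ra$, define $\Psi_-\colon\Mwf_I\to\Ior\times\cantor$ by $\Psi_-(X):=(J_X,y_X)$ and $\Psi_+\colon\Ior\times\cantor\to\Mwf_I$ by $\Psi_+(J,y):=A^I_{J,y}$. If $\Psi_-(X)\sqsubseteq^I(J,y)$, then \autoref{lem:b2} yields $A^I_{J_X,y_X}\subseteq A^I_{J,y}$, so $X\subseteq\Psi_+(J,y)$, as required.

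Conversely, for $\la\Ior\times\cantor,\sqsubseteq^I\ra\leqT\Mwf_I$, define $\Psi'_-\colon\Ior\times\cantor\to\Mwf_I$ by $\Psi'_-(J,y):=A^I_{J,y}$ and $\Psi'_+\colon\Mwf_I\to\Ior\times\cantor$ by $\Psi'_+(X):=(J_X,y_X)$. If $\Psi'_-(J,y)\subseteq X$, then $A^I_{J,y}\subseteq X\subseteq A^I_{J_X,y_X}$, so \autoref{lem:b2} gives $(J,y)\sqsubseteq^I(J_X,y_X)=\Psi'_+(X)$. The only nontrivial ingredients are \autoref{lem:b2} and the uncountability of $\bfrak(\Ior\times\cantor,\sqsubseteq^I)$; both have already been established, so there is no substantial obstacle remaining.
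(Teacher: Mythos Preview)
Your proof is correct and follows essentially the same approach as the paper: the theorem is stated there as a direct consequence of \autoref{lem:b2} and \autoref{lem:b1.5}, and you have spelled out exactly that derivation (the paper defers properness to \autoref{lem:b3}, but your observation that each $A^I_{J,y}$ is meager is already noted in \autoref{def:b2} and suffices).
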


We must show, of course, that $\cantor\notin \Mwf_I$, but this is immediate from $\Mwf_I\subseteq\Mwf$, which we prove in \autoref{lem:b3}.

It is clear that the set $\set{(J,y)\in\Ior\times\cantor}{J\text{ is coarser that }I}$ is cofinal in $\la\Ior\times\cantor,\sqsubset^I\ra$. As a consequence:

\begin{corollary}\label{cor:b1}
We can rewrite 
$$\Mwf_I=\set{x\in\cantor}{\exists(J,y)\in\Ior\times\cantor\colon X\subseteq A_{J,y}^I\text{ and $J$ is coarser than }I}.$$
\end{corollary}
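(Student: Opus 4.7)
The plan is to apply the Main Lemma~\ref{lem:b2} together with the cofinality observation preceding the statement. Suppose $X \in \Mwf_I$, so by \autoref{def:b2} there is $(J,y) \in \Ior \times \cantor$ with $X \subseteq A_{J,y}^I$. It suffices to produce $(J',y')$ with $(J,y) \sqsubset^I (J',y')$ and $J'$ coarser than $I$: the Main Lemma then gives $A_{J,y}^I \subseteq A_{J',y'}^I$, so $X \subseteq A_{J',y'}^I$ with $J'$ coarser than $I$, as required. The reverse containment is immediate from the definition. Thus the entire content reduces to verifying that the set of pairs $(J',y')$ with $J'$ coarser than $I$ is cofinal in $\la\Ior\times\cantor,\sqsubset^I\ra$.

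I would handle two cases. If $I \not\sqsubseteq J$, then the first clause of \autoref{def:b1} is vacuously satisfied, so $(J,y) \sqsubset^I (J',y')$ holds for every $(J',y')$; take $J' := I$ (trivially coarser than itself) and $y'$ arbitrary. If $I \sqsubseteq J$, pick $n_0$ so that for every $\ell \geq n_0$ there exists $k$ with $I_k \subseteq J_\ell$, and let $k_\ell$ denote the least such $k$. Since the $J_\ell$'s are pairwise disjoint consecutive intervals, $\seq{k_\ell}{\ell \geq n_0}$ is strictly increasing. Define $J' \in \Ior$ by setting $J'_0 := [0, \min I_{k_{n_0+1}})$ and
\[
J'_n := \bigcup_{k_{n_0+n} \,\leq\, k \,<\, k_{n_0+n+1}} I_k \quad \text{for } n \geq 1.
\]
Each $J'_n$ is a union of consecutive $I$-intervals, so $J'$ is coarser than $I$.

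For the second coordinate, define $y' \in \cantor$ by $y' \restriction I_k := y \restriction I_k$ whenever $I_k \subseteq J_\ell$ for some $\ell \geq n_0$ (well-defined since the $J_\ell$'s are disjoint) and arbitrary otherwise. To verify $(J,y) \sqsubset^I (J',y')$, fix $n \geq 1$ and set $\ell := n_0 + n$; then $J_\ell \supseteq I_{k_\ell}$ witnesses condition (i) of \autoref{def:b1}. For condition (ii), any $k$ with $I_k \subseteq J_\ell$ satisfies $k \geq k_{n_0+n}$ by minimality of $k_\ell$, and $k < k_{n_0+n+1}$ (otherwise $I_k$ would meet $J_{n_0+n+1}$, contradicting disjointness of the $J$-intervals), hence $I_k \subseteq J'_n$ and $y \restriction I_k = y' \restriction I_k$ by construction. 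The only delicate point is the bookkeeping with indices to ensure $J'$ is a genuine partition of $\omega$ into consecutive intervals coarser than $I$ while preserving the cofinality relation; beyond that I expect no real obstacle.
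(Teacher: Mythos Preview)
Your proposal is correct and follows exactly the approach the paper takes: reduce the corollary to the claim that the pairs $(J',y')$ with $J'$ coarser than $I$ are $\sqsubset^I$-cofinal, then invoke the Main Lemma~\ref{lem:b2}. The paper simply asserts this cofinality as ``clear'' without proof; you have supplied a careful verification of it, with the right case split on whether $I\sqsubseteq J$. One minor remark: in your justification of $k<k_{n_0+n+1}$, the phrase ``$I_k$ would meet $J_{n_0+n+1}$'' is slightly imprecise (what you really get is that $I_k$ lies at or beyond $\min J_{n_0+n+1}$, contradicting $I_k\subseteq J_{n_0+n}$), but the conclusion is correct.
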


We point out some simple facts below, which let us conclude that $\vec\Mwf$ is a directed scheme of ideals.

\begin{fact}\label{fac:b0}
Let $I, I',J,J'\in\Ior$ and $x,y,y'\in\cantor$. Then:
\begin{enumerate}[label=\rm(\alph*)]
    \item\label{fac:b0:1} $x\in A_{J,y}^I$ iff $(I,x)\sqsubset^I(J,y)$.
    \item\label{fac:b0:2} If $I\sqsubseteq I'$ then $A^{I'}_{J,y} \subseteq A^{I}_{J,y}$. In particular, $\Mwf_{I'}\subseteq\Mwf_I$. 
\end{enumerate}
\end{fact}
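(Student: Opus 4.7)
Both parts should fall out directly from unwinding the definitions of $A^I_{J,y}$ and $\sqsubset^I$, without invoking the Main Lemma~\ref{lem:b2}; I expect no serious obstacle beyond careful bookkeeping of variable names.

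For part~\ref{fac:b0:1}, the plan is to substitute into Definition~\ref{def:b1}, taking the ``$J$'' to be $I$, the ``$y$'' to be $x$, the ``$J'$'' to be $J$, and the ``$y'$'' to be $y$. The first disjunct $I\not\sqsubseteq I$ is then false, so only the second clause is relevant. Because $I$ partitions $\omega$ into consecutive intervals, $I_k\subseteq I_\ell$ forces $k=\ell$; hence condition~(i) is trivially satisfied by $k=\ell$, and the universal condition~(ii) collapses to the single statement ``$I_\ell\subseteq J_n$ and $x\restriction I_\ell = y\restriction I_\ell$.'' Quantifying over $n$, this is precisely the defining condition of $x\in A^I_{J,y}$ from Definition~\ref{def:b2}. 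So~\ref{fac:b0:1} amounts to reading the two definitions side by side.

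For part~\ref{fac:b0:2}, the plan is to establish the pointwise inclusion $A^{I'}_{J,y}\subseteq A^{I}_{J,y}$, from which $\Mwf_{I'}\subseteq \Mwf_I$ is automatic (any $X\subseteq A^{I'}_{J,y}$ is then a subset of $A^{I}_{J,y}$). Given $x\in A^{I'}_{J,y}$, for all but finitely many $n$ I can select $m=m(n)$ with $I'_m\subseteq J_n$ and $x\restriction I'_m = y\restriction I'_m$. Since the $J_n$'s are consecutive non-empty intervals, $\min J_n\to\infty$, so the witnessing indices $m(n)$ also tend to infinity. Now invoking $I\sqsubseteq I'$: for all sufficiently large $m$ there is some $k$ with $I_k\subseteq I'_m$, so for large $n$ I obtain $k$ with $I_k\subseteq I'_{m(n)}\subseteq J_n$ and (by restriction) $x\restriction I_k = y\restriction I_k$. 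This is the defining condition of $x\in A^I_{J,y}$. The only point that requires a brief justification is the bookkeeping observation $m(n)\to\infty$, which uses only that the partitions $I'$ and $J$ consist of consecutive intervals.
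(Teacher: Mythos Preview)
Your argument is correct and is exactly the direct unwinding of Definitions~\ref{def:b1} and~\ref{def:b2} that the paper has in mind; the paper states this as a Fact with no proof, so there is nothing further to compare. Your bookkeeping point that $m(n)\to\infty$ in part~\ref{fac:b0:2} is the only non-automatic step, and you justify it correctly from the fact that the intervals $J_n$ march off to infinity.
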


We ask:

\begin{question}
 Does $I\sqsubseteq I'$ imply $\Mwf_{I'}\leqT\Mwf_I$?  
\end{question}

Although the answer to the previous question is unknown to us, we have a stronger relation on $\Ior$ from where $\Mwf_{I'}\leqT\Mwf_I$ holds. Namely, for $I,I' \in \Ior$, $I\sqsubseteq^+ I'$ means that, for all but finitely many $n<\omega$, $I'_n$ is the union of some $I_k$'s. Notice that $\la I^+,\sqsubseteq^+\ra$ is a preorder but not directed, even more, $\cof(\Ior,\sqsubseteq^+)=\cfrak$ (see~\cite[Prop.~3.7]{CMS}). 

\begin{fact}\label{fac:b1}
For $I, I'\in\Ior$, if $I\sqsubseteq^+ I'$ then $(J,y)\sqsubset^I(J',y')$ implies $(J,y)\sqsubset^{I'}(J',y')$, In particular, $\la\Ior\times \cantor,\sqsubset^{I'}\ra \leqT \la\Ior\times \cantor,\sqsubset^I\ra$ and $\Mwf_{I'}\leqT \Mwf_I$. 
\end{fact}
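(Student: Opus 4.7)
The plan is a direct combinatorial check of the key implication, with the Tukey connections dropping out as immediate consequences via \autoref{cor:b0}. I would assume $(J,y)\sqsubset^I(J',y')$ and $I\sqsubseteq^+ I'$, and split on whether $I'\sqsubseteq J$: if not, $(J,y)\sqsubset^{I'}(J',y')$ holds trivially by the first clause of \autoref{def:b1}; otherwise, $I\sqsubseteq^+ I'$ immediately yields $I\sqsubseteq I'$ (each $I'_n$, being a finite union of $I_k$'s for large $n$, contains some $I_k$), which combined with $I'\sqsubseteq J$ gives $I\sqsubseteq J$, so the nontrivial clause of the hypothesis produces, for all but finitely many $n$, a witness $\ell_n<\omega$ satisfying clauses (i) and (ii) of \autoref{def:b1}.

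The decisive point is to observe that any such witness $\ell_n$ must tend to infinity with $n$. Indeed, clauses (i) and (ii) together force $J_{\ell_n}$ to contain some $I_k\subseteq J'_n$, so $J_{\ell_n}\cap J'_n\neq\emptyset$; since the intervals $J'_n$ drift to infinity, so must $\ell_n$. I would fix $L_0<\omega$ such that every $J_\ell$ with $\ell\geq L_0$ contains some $I'_{k'}$ (possible by $I'\sqsubseteq J$) and $N_0<\omega$ such that every $I'_{k'}$ with $k'\geq N_0$ is a union of members of $I$ (possible by $I\sqsubseteq^+ I'$), and then restrict attention to $n$ large enough that $\ell_n\geq L_0$ and $J_{\ell_n}$ lies past the end of $I'_{N_0-1}$.

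For such $n$, I would check that the same $\ell_n$ witnesses the conditions (i') and (ii') required for $(J,y)\sqsubset^{I'}(J',y')$. Clause (i') is immediate from $\ell_n\geq L_0$. For (ii'), fix any $I'_{k'}\subseteq J_{\ell_n}$: the placement of $J_{\ell_n}$ forces $k'\geq N_0$, so $I'_{k'}$ decomposes as $\bigcup_{k\in K}I_k$ with each $I_k\subseteq I'_{k'}\subseteq J_{\ell_n}$; applying clause (ii) to each such $I_k$ yields $I_k\subseteq J'_n$ and $y\frestr I_k=y'\frestr I_k$, and reuniting gives $I'_{k'}\subseteq J'_n$ and $y\frestr I'_{k'}=y'\frestr I'_{k'}$, as needed.

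With the implication established, the Tukey connection $\la\Ior\times\cantor,\sqsubset^{I'}\ra\leqT\la\Ior\times\cantor,\sqsubset^I\ra$ is witnessed by the identity maps on $\Ior\times\cantor$, and $\Mwf_{I'}\leqT\Mwf_I$ then follows at once from the Tukey-equivalences in \autoref{cor:b0}. I expect the main obstacle to be the ``$\ell_n\to\infty$'' observation, which is what allows importing the hypothesis $I'\sqsubseteq J$ into the analysis of the witnesses produced by $(J,y)\sqsubset^I(J',y')$; everything else is routine bookkeeping about refinements of interval partitions.
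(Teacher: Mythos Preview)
Your argument is correct. The paper states this result as a \emph{Fact} without proof, treating it as a routine verification from the definitions; your direct combinatorial check---splitting on whether $I'\sqsubseteq J$, then showing the witness $\ell_n$ from the $\sqsubset^I$ relation also serves for $\sqsubset^{I'}$ once $n$ is large---is exactly the natural way to fill in the details, and the Tukey consequences via the identity maps and \autoref{cor:b0} are immediate.
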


Recall that $I^1$ denotes the partition of $\omega$ into singletons.

\begin{lemma}\label{lem:b3}
\ 
\begin{enumerate}[label=\rm(\alph*)]
    \item\label{lem:b3:1} $\Mwf_{I^1}=\Mwf$.
    \item\label{lem:b3:2} For all $I\in\Ior$, $\Mwf_I\subseteq\Mwf$ and $\Mwf_I\leqT\Mwf$.  
\end{enumerate} 
\end{lemma}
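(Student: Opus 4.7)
The plan is to derive both parts from one key observation about the trivial partition $I^1$ combined with Talagrand's cofinality theorem for $\Mwf$.

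First I would compute $A^{I^1}_{J,y}$ explicitly. Since $I^1_k = \{k\}$, the conditions ``$I^1_k \subseteq J_n$'' and ``$x{\restriction}I^1_k = y{\restriction}I^1_k$'' reduce to ``$k \in J_n$'' and ``$x(k) = y(k)$'', so
\[
A^{I^1}_{J,y} = \set{x \in \cantor}{\forall^\infty n < \omega\ \exists k \in J_n\colon x(k) = y(k)}.
\]
Placing this side by side with the chopped-real meager set $B_{y',J} = \set{x \in \cantor}{\forall^\infty n<\omega\colon x{\restriction} J_n \neq y'{\restriction} J_n}$, a coordinate-wise flip of $y$ gives the identity $A^{I^1}_{J, y} = B_{1-y, J}$, and hence the families $\set{A^{I^1}_{J, y}}{J \in \Ior,\ y \in \cantor}$ and $\set{B_{y', J}}{J \in \Ior,\ y' \in \cantor}$ coincide as sets of subsets of $\cantor$. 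Since the latter family is cofinal in $\Mwf$ by Talagrand's theorem (\autoref{thm:b0}), so is the former, proving \ref{lem:b3:1}.

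For \ref{lem:b3:2}, I would observe that $I^1$ is essentially the minimum of $\Ior$ with respect to both $\sqsubseteq$ and $\sqsubseteq^+$: each $I_n$ is a non-empty interval, so it contains some singleton $\{k\} = I^1_k$ (yielding $I^1 \sqsubseteq I$), and in fact equals the union $\bigcup_{k \in I_n}\{k\}$ of singletons (yielding $I^1 \sqsubseteq^+ I$). Then \autoref{fac:b0}\ref{fac:b0:2} produces $\Mwf_I \subseteq \Mwf_{I^1} = \Mwf$, while \autoref{fac:b1} produces $\Mwf_I \leqT \Mwf_{I^1} = \Mwf$.

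The only non-routine step is spotting the identification $A^{I^1}_{J,y} = B_{1-y,J}$ in part \ref{lem:b3:1}; once this is noticed, both parts follow mechanically from results already stated in the paper, so I do not anticipate any substantive obstacle.
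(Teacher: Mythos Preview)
Your proposal is correct and follows essentially the same route as the paper: the key identity $A^{I^1}_{J,y} = B_{1-y,J}$ (the paper writes $y':=y+1\bmod 2$) is exactly what the paper establishes for part~\ref{lem:b3:1}, and the paper likewise derives part~\ref{lem:b3:2} from~\ref{lem:b3:1} together with \autoref{fac:b0} and \autoref{fac:b1}. Your explicit invocation of Talagrand's theorem just makes overt what the paper's ``Therefore'' leaves implicit.
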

\begin{proof}
We just prove~\ref{lem:b3:1} because~\ref{lem:b3:2} is consequence of~\ref{lem:b3:1}, \autoref{fac:b0} and~\ref{fac:b1}. 
For $x\in\cantor$, $x\in A_{J,y}^{I^1}$ iff $\forall^\infty n\,\exists k\in J_n\colon x(k)=y(k)$, which is equivalent to $\forall^\infty n\colon x{\restriction}J_n\neq y'{\restriction}J_n$ where $y':=y+1 \mod{2}$. This actually shows $A_{J,y}^{I^1}=B_{y',J}$ (see~\autoref{def:b0}). Therefore, $\Mwf_{I^1}=\Mwf$.
\end{proof}

The previous representation of $\Mwf$ can be used to prove:

\begin{theorem}[{\cite{bartJudah}}]\label{cor:b4}
 $\MAwf=\bigcap_{I\in\Ior}\Mwf_I$.   
\end{theorem}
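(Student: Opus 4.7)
The plan is a direct comparison of definitions, with Theorem~\ref{thm:a0}\ref{thm:a0:MA} as the only non-trivial input. First I will unfold the meaning of ``$X \in \Mwf_I$'' for a fixed partition $I \in \Ior$: by Definition~\ref{def:b2} this asserts the existence of some $(J,y) \in \Ior \times \cantor$ with $X \subseteq A_{J,y}^I$. Expanding the inclusion pointwise via the definition of $A_{J,y}^I$ gives
\[
\forall x \in X\, \forall^\infty n<\omega\, \exists k<\omega\colon I_k \subseteq J_n \text{ and } x\frestr I_k = y\frestr I_k,
\]
which is precisely condition~\eqref{for:ao} from the Bartoszyński–Judah characterization applied to this particular $I$.

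Next I will quantify over $I \in \Ior$. Since $X \in \bigcap_{I \in \Ior} \Mwf_I$ holds iff a suitable pair $(J,y)$ exists for every $I \in \Ior$, this is verbatim the statement that $X \in \MAwf$ according to Theorem~\ref{thm:a0}\ref{thm:a0:MA}. Combining both steps yields $\MAwf = \bigcap_{I \in \Ior} \Mwf_I$.

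No genuine obstacle arises in this approach: the content of the theorem is the organizational observation that the scheme $\vec\Mwf = \seq{\Mwf_I}{I \in \Ior}$ introduced in Definition~\ref{def:b2} faithfully encodes $\MAwf$. If one instead wished to avoid invoking Theorem~\ref{thm:a0}\ref{thm:a0:MA} and establish both inclusions from scratch, the hard step would be the non-trivial direction of the Bartoszyński–Judah characterization, namely that a set $X$ satisfying the pointwise condition for every $I \in \Ior$ is indeed meager-additive; that is the only place where any work beyond definition-chasing would be required.
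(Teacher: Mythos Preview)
Your proposal is correct: unfolding \autoref{def:b2} and quoting \autoref{thm:a0}\ref{thm:a0:MA} immediately yields the identity. The paper even remarks, just before the statement, that by \autoref{thm:a0}\ref{thm:a0:MA} ``it is clear that $\MAwf=\bigcap_{I\in\Ior}\Mwf_I$, but we will offer a proof in \autoref{cor:b4}.''

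The paper's proof, however, takes a genuinely different route: rather than invoking the Bartoszy\'nski--Judah characterization as a black box, it \emph{re-derives} it from within the newly introduced framework. Starting from the raw definition $X\in\MAwf \Leftrightarrow \forall A\in\Mwf\colon X+A\in\Mwf$, it uses \autoref{lem:b3}\ref{lem:b3:1} (the identification $\Mwf_{I^1}=\Mwf$, so every meager set sits below some $A^{I^1}_{I,y}$) together with the translation formula $x+A^{I^1}_{I,y}=A^{I^1}_{I,x+y}$ to rewrite meager-additivity as a containment $A^{I^1}_{I,x}\subseteq A^{I^1}_{J,y'}$ for every $x\in X$. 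The containment is then unpacked (essentially the $I^1$-instance of \autoref{lem:b2}) to recover condition~\eqref{for:ao}, and hence $X\in\Mwf_I$. What this buys is self-containment: the paper demonstrates that its scheme $\vec\Mwf$ and the identification $\Mwf=\Mwf_{I^1}$ already encode the Bartoszy\'nski--Judah characterization, so \autoref{thm:a0}\ref{thm:a0:MA} need not be assumed. Your approach is shorter and perfectly legitimate if one is content to cite that result.
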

\begin{proof}
For $X\subseteq\cantor$, 
\begin{align*}
X\in\MAwf & \text{\ iff\ }  \forall A\in\Mwf\colon X+A\in\Mwf \\
 & \text{\ iff\ }\forall (I,y)\in\Ior\times\cantor\,\exists (J,y')\in\Ior\times \cantor\colon X+A_{I,y}^{I^1}=\underbrace{\bigcup_{x\in X}x+A_{I,y}^{I^1}}_{\bigcup_{x\in X}A_{I,x+y}^{I^1}}\subseteq A_{J, y'}^{I^1}\\
 & \text{\ iff\ } \forall (I,y)\in\Ior\times\cantor\,\exists (J,y')\in\Ior\times  \cantor\,\forall x\in X\colon A_{I,x+y}^{I^1}\subseteq A_{J,y'}^{I^1}\\
 & \text{\ iff\ } \forall I\in\Ior\,\exists(J,y)\in\Ior\times\cantor\,\forall x\in X\colon A_{I,x}^{I^1}\subseteq A_{J,y}^{I^1} \text{ (using a translation)}\\
 & \text{\ iff\ } \forall I\in\Ior\,\exists(J,y)\in\Ior\times\cantor\,\forall x\in X\,\forall^\infty n\in\omega\, \exists\ell<\omega\colon I_\ell\subseteq J_n\,\wedge\,x{\restriction}I_\ell=y{\restriction}I_\ell\\
 & \text{\ iff\ } \forall I\in\Ior\,\exists(J,y)\in\Ior\times\cantor\colon X\subseteq A_{J,y}^I\\
& \text{\ iff\ } \forall I\in\Ior\colon X\in\Mwf_I\\
& \text{\ iff\ } X\in\bigcap_{I\in\Ior}\Mwf_I. \qedhere
 \end{align*}
\end{proof}

We now look at the cardinal characteristics associated with $\vec\Mwf$ and its influence on those with $\MAwf$ as well. Since $\vec\Mwf$ is a directed scheme of ideals and $\MAwf=\vec\Mwf^*$, by \autoref{lem:c0} and~\ref{thm:c1} we immediately obtain:

\begin{lemma}\label{lem:b6}
Let $I, J\in \Ior$ and $D\subseteq \Ior$ cofinal. 
    \begin{enumerate}[label=\rm(\alph*)]
        \item\label{lem:b6:1} If $I\sqsubseteq J$ then $\Cv_\Mwf\leqT\Cv_{\Mwf_I}\leqT\Cv_{\Mwf_J}\leqT\Cv_{\MAwf} \leqT \prod_{I\in D}\Cv_{\Mwf_I}$. In particular,  $\cov(\Mwf)\leq \cov(\Mwf_I)\leq\cov(\Mwf_J)\leq \supcov(\vec\Mwf)\leq\cov(\MAwf) \leq \dfrak\left(\prod_{I\in D}\Cv_{\Mwf_I}\right)$ and $\non(\MAwf)=\minnon(\vec\Mwf)\leq\non(\Mwf_I)\leq \non(\Mwf_J)\leq\non(\Mwf)$.
        \item\label{lem:b6:2} $\MAwf \leqT \prod_{I\in D}\Mwf_I$. 
    \end{enumerate}
\end{lemma}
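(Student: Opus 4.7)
The plan is to derive this lemma as a direct corollary of the abstract results \autoref{lem:c0} and \autoref{thm:c1} applied to the particular directed scheme $\vec\Mwf$. The prerequisites are already in place: by \autoref{fac:b0}\ref{fac:b0:2}, $\Mwf_{I'}\subseteq \Mwf_I$ whenever $I\sqsubseteq I'$, so $\vec\Mwf$ is genuinely a directed scheme of ideals on $\cantor$; and by \autoref{cor:b4}, $\MAwf = \vec\Mwf^*$. The only extra ingredient is that $I^1$ serves as a $\sqsubseteq$-minimum of $\Ior$, since every singleton $\{k\}$ lies in the unique member of any $I\in \Ior$ containing $k$; combined with \autoref{lem:b3}\ref{lem:b3:1}, this gives $\Mwf_I\subseteq \Mwf_{I^1}=\Mwf$ for every $I$.

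For part (a), the middle two Tukey reductions $\Cv_{\Mwf_I}\leqT \Cv_{\Mwf_J}\leqT \Cv_{\MAwf}$ (under $I\sqsubseteq J$) are exactly \autoref{lem:c0}\ref{lem:c0:1} specialized to $\vec\Mwf$, and the rightmost reduction $\Cv_{\MAwf}\leqT \prod_{I\in D}\Cv_{\Mwf_I}$ is \autoref{lem:c0}\ref{lem:c0:1.5}. The leftmost reduction $\Cv_\Mwf \leqT \Cv_{\Mwf_I}$ follows from the inclusion $\Mwf_I\subseteq\Mwf$ established above, together with \autoref{ex:c4}\ref{ex:c4:0}. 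The cardinal consequences then follow by standard Tukey manipulations: each $\leqT$ reverses the direction of $\bfrak$ (which is $\non$ for $\Cv$-systems) and preserves the direction of $\dfrak$ (which is $\cov$); the sandwich inequalities $\cov(\Mwf_J)\leq \supcov(\vec\Mwf)$ and $\minnon(\vec\Mwf)\leq \non(\Mwf_I)$ are immediate from the definitions of $\supcov$ and $\minnon$; and the equality $\non(\MAwf)=\minnon(\vec\Mwf)$ is the content of \autoref{lem:c0}\ref{lem:c0:2}.

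For part (b), \autoref{thm:c1} applied to the directed scheme $\vec\Mwf$ with cofinal $D\subseteq \Ior$ yields $\MAwf = \vec\Mwf^* \leqT \prod_{I\in D}\Mwf_I$ with no further work, via the Tukey connection $\Psi_-(A)$ the constant sequence with value $A$ and $\Psi_+\bigl(\seq{A_I}{I\in D}\bigr):=\bigcap_{I\in D}A_I\in \MAwf$.

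There is no real obstacle: the lemma is essentially a bookkeeping exercise assembling the general framework of \autoref{sec:s1} with the concrete facts about $\vec\Mwf$ established in \autoref{fac:b0}, \autoref{lem:b3}, and \autoref{cor:b4}. The only mild subtlety worth flagging in writing is pinning down $I^1$ as the $\sqsubseteq$-minimum to anchor the leftmost inequalities of the chain against $\Mwf$ itself rather than merely among the $\Mwf_I$'s.
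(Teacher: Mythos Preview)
Your proposal is correct and follows essentially the same route as the paper, which simply states that the lemma is obtained immediately from \autoref{lem:c0} and \autoref{thm:c1} once one knows $\vec\Mwf$ is a directed scheme with $\MAwf=\vec\Mwf^*$. Your write-up is a faithful unpacking of that one-line justification, and your remark about anchoring the leftmost link $\Cv_\Mwf\leqT\Cv_{\Mwf_I}$ via $I^1$ and \autoref{lem:b3} is exactly the small extra observation needed beyond the abstract framework.
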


It is also clear that $\mincov(\vec\Mwf)=\cov(\Mwf)$ and $\supnon(\vec\Mwf) = \non(\Mwf)$. Although \autoref{lem:b6}~\ref{lem:b6:2} gives us some information about the additivity and cofinality numbers, we are going to prove much more.
Towards this direction, we first show a connection between $\baire$ and $\Mwf_I$.

\begin{lemma}\label{lem:b4}
 $\baire\leqT\Mwf_I$.  
\end{lemma}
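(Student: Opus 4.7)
The plan is to reduce the statement to a Tukey connection between $\Ior$ and $\la\Ior\times\cantor,\sqsubset^I\ra$. By \autoref{cor:b0} we have $\Mwf_I\eqT\la\Ior\times\cantor,\sqsubset^I\ra$, and by \autoref{ex:c1} we have $\baire\eqT\Ior$, so it suffices to exhibit a Tukey connection $\Ior\leqT\la\Ior\times\cantor,\sqsubset^I\ra$.

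For the forward map $\Psi_-\colon\Ior\to\Ior\times\cantor$, I would send each $K\in\Ior$ to a pair $(K^*,y^*)$, where $y^*\in\cantor$ is arbitrary (say the zero sequence) and $K^*$ is a coarsening of $K$ whose intervals are \emph{exactly} unions of consecutive $I_k$'s. Concretely, build $K^*$ recursively: set $c_0:=0$; given $c_n$, let $g(n)$ be the least $m$ with $\min K_m\geq c_n$, and let $c_{n+1}$ be the least $I$-endpoint strictly above $\max K_{g(n)}$; then put $K^*_n:=[c_n,c_{n+1})$. By construction each $K^*_n$ is a union of consecutive $I_k$'s (in particular $I\sqsubseteq K^*$) and satisfies $K_{g(n)}\subseteq K^*_n$. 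For the backward map simply take $\Psi_+(J',y'):=J'$.

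To verify the Tukey property, suppose $(K^*,y^*)\sqsubset^I(J',y')$. Since $I\sqsubseteq K^*$, the trivial clause in \autoref{def:b1} is ruled out, so for almost all $n$ there is some $\ell$ such that $K^*_\ell$ contains an $I_k$ and, for every $k$, $I_k\subseteq K^*_\ell$ implies $I_k\subseteq J'_n$ (the matching condition between $y^*$ and $y'$ is merely a constraint on $y'$ that plays no role here). Because $K^*_\ell$ is the disjoint union of the $I_k$'s it contains, each of those lies in $J'_n$, hence $K^*_\ell\subseteq J'_n$; then $K_{g(\ell)}\subseteq K^*_\ell\subseteq J'_n$, witnessing $K\sqsubseteq J'=\Psi_+(J',y')$. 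This gives the required Tukey connection.

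The only delicate point is the construction of $K^*$: the \emph{strict} $I$-refinement property (each $K^*_n$ being a whole union of $I_k$'s, not merely containing one) is precisely what lets the pointwise containment $I_k\subseteq K^*_\ell$ propagate to the block-level containment $K^*_\ell\subseteq J'_n$, while the auxiliary witness $K_{g(\ell)}\subseteq K^*_\ell$ is what recovers a genuine $K$-interval inside $J'_n$. Both requirements are trivially compatible because the $I$-endpoints are unbounded in $\omega$, so the recursion always progresses and yields a genuine partition of $\omega$.
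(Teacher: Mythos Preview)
Your proof is correct and follows essentially the same approach as the paper: reduce to showing $\Ior\leqT\la\Ior\times\cantor,\sqsubset^I\ra$, send $K$ to a pair $(K^*,\bar 0)$ with $K^*$ coarser than $I$ and $K\sqsubseteq K^*$, and project back via $(J',y')\mapsto J'$. The only difference is that you spell out an explicit recursive construction of $K^*$, whereas the paper simply asserts the existence of such a coarsening $J^+$; both rely on the same key observation that when $K^*$ is a genuine $I$-coarsening, condition~(ii) of \autoref{def:b1} forces $K^*_\ell\subseteq J'_n$.
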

\begin{proof}
It suffices to show that $\Ior\leqT \la \Ior\times \cantor, \sqsubset^I\ra$ because $\Ior\eqT\baire$ (see~\autoref{ex:c1}). Define the functions $\Psi_-\colon \Ior\to\Ior\times\cantor$ and $\Psi_+\colon \Ior\times\cantor \to \Ior$ as follows. 
For $J\in\Ior$, pick some $J^+\in\Ior$, coarser that $I$, such that $J\sqsubseteq J^+$. 
 
So define
\begin{align*}
    \Psi_-(J) & := (J^+,\bar0),\\
    \Psi_+(J',y') & := J',
\end{align*}
where $\bar0$ is the constant sequence with value $0$.

The pair $(\Psi_-,\Psi_+)$ is the desired Tukey connection: Assume $(J^+,\bar{0})\sqsubset^I(J',y')$. Since $J^+$ is coarser than $I$, we obtain $J^+\sqsubseteq J'$, so $J\sqsubseteq J'$.
\end{proof}

Another relevant result is the following.

\begin{lemma}\label{lem:b7}
    For any $I, I'\in\Ior$, $\Cv_{\Mwf_I}\leqT \Mwf_{I'}$. 
\end{lemma}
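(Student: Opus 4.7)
The approach is to build an explicit Tukey connection through an auxiliary partition $I^\bullet\in\Ior$ chosen so that $I'\sqsubseteq^+ I^\bullet$ and $I\sqsubseteq I^\bullet$. Such a partition can always be constructed by greedily grouping consecutive $I'$-intervals into blocks $I^\bullet_n$, each block taken long enough to enclose some full $I_k$; this is possible because the right-endpoints of $I'$ tend to infinity, so at each stage one can extend the current block by further $I'$-intervals until it contains any prescribed $I$-interval. The resulting $I^\bullet$ satisfies $I'\sqsubseteq^+ I^\bullet$ (each $I^\bullet_n$ is exactly a union of $I'$-intervals) and $I\sqsubseteq I^\bullet$ (each $I^\bullet_n$ contains some $I_k$), so \autoref{fac:b0}\ref{fac:b0:2} yields $\Mwf_{I^\bullet}\subseteq\Mwf_I$.

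By \autoref{cor:b0}, $\Mwf_{I'}\eqT\la\Ior\times\cantor,\sqsubset^{I'}\ra$, so it suffices to produce a Tukey connection $\Cv_{\Mwf_I}\leqT\la\Ior\times\cantor,\sqsubset^{I'}\ra$. I would set
\[\Psi_-(x)\defeq (I^\bullet,x)\in\Ior\times\cantor \quad\text{and}\quad \Psi_+(J',y')\defeq A^{I^\bullet}_{J',y'}\in\Mwf_I,\]
where the second map genuinely lands in $\Mwf_I$ thanks to the containment from the previous paragraph.

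The Tukey condition to verify is: $(I^\bullet,x)\sqsubset^{I'}(J',y')$ implies $x\in A^{I^\bullet}_{J',y'}$. Since $I'\sqsubseteq I^\bullet$, the first disjunct of \autoref{def:b1} fails, and since $I'\sqsubseteq^+ I^\bullet$ each sufficiently late $I^\bullet_\ell$ is a disjoint union of consecutive $I'$-intervals. Domination then supplies, for all but finitely many $n$, some $\ell$ such that every $I'$-subinterval of $I^\bullet_\ell$ lies in $J'_n$ and $x$ matches $y'$ on each of them; amalgamating across $I^\bullet_\ell$ gives $I^\bullet_\ell\subseteq J'_n$ and $x\restriction I^\bullet_\ell = y'\restriction I^\bullet_\ell$, which is exactly what membership in $A^{I^\bullet}_{J',y'}$ requires.

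The main obstacle, and the essence of the argument, is the choice of the intermediate partition $I^\bullet$. Using $(I,x)$ or $(I',x)$ directly as the left image fails: the first option may render $\sqsubset^{I'}$ vacuously satisfied when $I'\not\sqsubseteq I$ and is therefore uninformative, whereas the second produces the set $A^{I'}_{J',y'}$, which in general does not lie in $\Mwf_I$. The auxiliary $I^\bullet$ is engineered precisely so that $I'\sqsubseteq^+ I^\bullet$ lets block-wise agreement on $I'$-intervals amalgamate into agreement on whole $I^\bullet$-intervals, while $I\sqsubseteq I^\bullet$ forces the resulting $\Mwf_{I^\bullet}$-set to belong to $\Mwf_I$ by \autoref{fac:b0}\ref{fac:b0:2}.
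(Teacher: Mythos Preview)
Your proof is correct and follows essentially the same approach as the paper: both hinge on choosing an intermediate partition $I^\bullet$ (the paper calls it $I''$) that is coarser than $I'$ and satisfies $I\sqsubseteq I^\bullet$. The paper's proof is simply a compressed version of yours, chaining the already-established Tukey connections $\Cv_{\Mwf_I}\leqT\Cv_{\Mwf_{I^\bullet}}\leqT\Mwf_{I^\bullet}\leqT\Mwf_{I'}$ via \autoref{lem:b6}\ref{lem:b6:1} and \autoref{fac:b1}, whereas you unfold this composition into an explicit pair $(\Psi_-,\Psi_+)$ and verify it by hand.
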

\begin{proof}
Find $I''\in\Ior$ coarser than $I'$ such that $I\sqsubseteq I''$. Using \autoref{fac:b1} and \autoref{lem:b6}~\ref{lem:b6:1}, we get $\Cv_{\Mwf_I}\leqT \Cv_{\Mwf_{I''}}\leqT \Mwf_{I''}\leqT \Mwf_{I'}$
\end{proof}

We are ready to prove~\autoref{thm:a4}. Since $\minnon(\vec\Mwf) = \non(\MAwf)$, it is already known from \cite[Lem.~2.3]{paw85} that $\add(\Mwf)= \min\{\bfrak,\minnon(\vec\Mwf)\}$.

\begin{theorem}\label{thm:b1}
For all $I\in\Ior$,
\begin{center}
\begin{tabular}{ccccc}
    $\add(\Mwf_I)$ & $=$ &  $\add(\Mwf)$ & $=$ & $\min\{\bfrak,\minnon(\vec\Mwf)\}$,\\
    $\cof(\Mwf_I)$ & $=$ &  $\cof(\Mwf)$ & $=$ & $\max\{\dfrak,\supcov(\vec\Mwf)\}$.
\end{tabular}
\end{center}
In terms of Tukey connections, $\baire\times\bigvee_{J\in\Ior}\Cv_{\Mwf_J} \leqT \Mwf_I \leqT \Mwf \leqT \left(\Ior;\bigvee_{J\in\Ior}\Cv_{\Mwf_J}\right)$. 
\end{theorem}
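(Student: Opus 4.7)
The approach is to establish the displayed Tukey chain and then read off both cardinal equalities by applying \autoref{products} to its two endpoints.

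For the leftmost reduction, $\baire \leqT \Mwf_I$ is \autoref{lem:b4} and $\Cv_{\Mwf_J} \leqT \Mwf_I$ for every $J\in\Ior$ is \autoref{lem:b7}; combining via \autoref{products}~\ref{a251a} yields $\bigvee_{J\in\Ior}\Cv_{\Mwf_J} \leqT \Mwf_I$. Since $\la\Mwf_I,\subseteq\ra$ is directed and satisfies the required monotonicity, \autoref{a255} gives $\baire \times \bigvee_{J\in\Ior}\Cv_{\Mwf_J} \leqT \Mwf_I$. The middle link $\Mwf_I \leqT \Mwf$ is \autoref{lem:b3}~\ref{lem:b3:2}.

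The main technical step will be the rightmost reduction $\Mwf \leqT (\Ior;\bigvee_{J\in\Ior}\Cv_{\Mwf_J})$, for which I plan to combine Talagrand's \autoref{thm:b0} with our \autoref{lem:b2}. Given $F\in\Mwf$, fix a Talagrand pair $(y_F,J_F)$ with $F\subseteq B_{y_F,J_F}$, and set $\Psi_-(F):=(J_F,f_F)$ with $f_F(J_0):=(J_0,z_F^{J_0})$. The key design of $z_F^{J_0}\in\cantor$ is as follows: when $J_F\sqsubseteq J_0$ (the only case in which the Tukey hypothesis can hold), for each sufficiently large $n$ pick $\ell_n$ with $(J_F)_{\ell_n}\subseteq (J_0)_n$ and set $z_F^{J_0}\restriction(J_F)_{\ell_n}:=y_F\restriction(J_F)_{\ell_n}$, filling the remaining coordinates of $(J_0)_n$ arbitrarily; otherwise $z_F^{J_0}$ is arbitrary. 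For $\Psi_+$, given $(J_0,\langle A_J\rangle)$, use \autoref{cor:b1} to pick a canonical code $(J^*,y^*)$ coarser than $J_0$ with $A_{J_0}\subseteq A^{J_0}_{J^*,y^*}$, and define $\Psi_+(J_0,\langle A_J\rangle):=B_{y^*,J^*}$.

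To verify that this is a Tukey connection, suppose $(J_F,f_F)\sqsubset^;(J_0,\langle A_J\rangle)$. Then $J_F\sqsubseteq J_0$ and $z_F^{J_0}\in A_{J_0}\subseteq A^{J_0}_{J^*,y^*}$; unpacking the latter yields, for almost every $m$, an index $k$ with $(J_0)_k\subseteq (J^*)_m$ and $z_F^{J_0}\restriction(J_0)_k=y^*\restriction(J_0)_k$. Restricting this equality to the sub-interval $(J_F)_{\ell_k}\subseteq (J_0)_k$ and invoking the construction of $z_F^{J_0}$, we extract $y_F\restriction(J_F)_{\ell_k}=y^*\restriction(J_F)_{\ell_k}$ together with $(J_F)_{\ell_k}\subseteq (J^*)_m$. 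This is precisely the relation $(J_F,y_F)\sqsubset^{I^1}(J^*,y^*)$, so \autoref{lem:b2} (applied with $I=I^1$) gives $B_{y_F,J_F}\subseteq B_{y^*,J^*}$, hence $F\subseteq\Psi_+(J_0,\langle A_J\rangle)$, as required.

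Finally, the cardinal equalities fall out of the chain by invoking \autoref{products}~\ref{a251c}, \ref{a251d}, \ref{a251e}: both endpoints have $\bfrak=\min\{\bfrak,\minnon(\vec\Mwf)\}$ and $\dfrak=\max\{\dfrak,\supcov(\vec\Mwf)\}$, where the finite-index equality $\dfrak(\vec\Rbf^;)=\prod_i\dfrak(\Rbf_i)$ in part~\ref{a251e} is what pins down the $\dfrak$ of the right endpoint. Sandwiching along the Tukey chain then yields $\add(\Mwf_I)=\add(\Mwf)=\min\{\bfrak,\minnon(\vec\Mwf)\}$ and $\cof(\Mwf_I)=\cof(\Mwf)=\max\{\dfrak,\supcov(\vec\Mwf)\}$, completing the proof.
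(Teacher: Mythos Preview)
Your proof is correct. The leftmost and middle reductions match the paper verbatim. For the rightmost reduction $\Mwf\leqT(\Ior;\bigvee_{J\in\Ior}\Cv_{\Mwf_J})$, however, the paper takes a considerably simpler route: rather than working with $\Mwf$ directly (and invoking Talagrand's \autoref{thm:b0}), it replaces $\Mwf$ by $\la\Ior\times\cantor,\sqsubset^{I^1}\ra$ via \autoref{cor:b0} and defines $\Psi_-(I,y):=(I,\,J\mapsto(J,y))$ --- the \emph{same} real $y$ at every coordinate, with no auxiliary construction. Then if $I\sqsubseteq J_0$ and $y\in B_{J_0}\subseteq A^{J_0}_{J',y'}$, unpacking $y\in A^{J_0}_{J',y'}$ together with $I\sqsubseteq J_0$ yields $(I,y)\sqsubset^{I^1}(J',y')$ in one line. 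Your piecewise construction of $z_F^{J_0}$ is doing by hand exactly what the paper's map gets for free: since only the single coordinate $J_0$ is ever inspected in the verification, you could simply have fed $y_F$ itself and skipped Talagrand entirely.

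One minor wrinkle in your final step: \autoref{lem:b2} with $I=I^1$ gives $A^{I^1}_{J_F,y_F}\subseteq A^{I^1}_{J^*,y^*}$, which by the proof of \autoref{lem:b3} is $B_{y_F+1,J_F}\subseteq B_{y^*+1,J^*}$, not $B_{y_F,J_F}\subseteq B_{y^*,J^*}$. The fix is trivial --- the relation $\sqsubset^{I^1}$ is visibly invariant under $y\mapsto y+1$, or one can verify the $B$-inclusion directly from the unpacked relation without citing \autoref{lem:b2} at all --- but it should be stated.
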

\begin{proof}

Thanks to \autoref{products}, it is enough to prove the Tukey connections. Since we already have $\Mwf_I\leqT\Mwf$ by \autoref{lem:b3}, the following inequalities remain.

$\baire\times\bigvee_{J\in\Ior}\Cv_{\Mwf_J} \leqT \Mwf_I$: By \autoref{lem:b7} and \autoref{products}, $\bigvee_{J\in\Ior}\Cv_{\Mwf_J} \leqT \Mwf_I$. On the other hand, $\baire\leqT \Mwf_I$ by \autoref{lem:b4}, so we can use \autoref{a255} to conclude the desired inequality. 

$\Mwf\leqT\left(\Ior;\bigvee_{J\in\Ior}\Cv_{\Mwf_J}\right)$: 
We can use $\la\Ior\times\cantor,\sqsubset^{I^1}\ra$ instead of $\Mwf$. 
Define $\Psi_-\colon \Ior\times \cantor \to \Ior\times {}^\Ior (\Ior\times\cantor)$ by $\Psi_-(I,y):=(I,\bar y)$, where $\bar y\colon \Ior\to \Ior\times\cantor$, $J\mapsto (J,y)$, and define $\Psi_+\colon \Ior\times\prod_{J\in\Ior}\Mwf_J \to \Ior\times \cantor$ by $\Psi_+(J,\bar B):= (J',y')$, where $(J',y')$ is chosen such that $B_J\subseteq A^J_{J',y'}$. This is the required Tukey connection: if $I\sqsubseteq J$ and $\bar y(J) \sqsubset^\vee \bar B$, i.e.\ $y\in B_J$, then $y\in A^J_{J',y'}$, so $(I,y)\sqsubset^{I^1} (J',y')$.
\end{proof}

As a consequence, we obtain the following inequalities (although $\add(\Mwf)\leq \add(\MAwf)$ is easy to prove directly). 

\begin{corollary}\label{cor:b80}
    For any dominating $D\subseteq\Ior$, 
    \[\MAwf \leqT \prod_{I\in D}\Mwf_I \leqT \Mwf^D.\] 
    In particular, $\MAwf\leqT \Mwf^\dfrak \leqT \Cv_{[\cof(\Mwf)]^{<\add(\Mwf)}}^\dfrak$, $\add(\Mwf)\leq \add(\MAwf)$ and $\cof(\MAwf) \leq \dfrak\left(\Mwf^\dfrak\right) \leq \cov\left(([\cof(\Mwf)]^{<\add(\Mwf)})^\dfrak\right)$. 
\end{corollary}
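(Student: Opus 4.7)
This corollary is a bookkeeping result that packages together the Tukey-theoretic machinery established in \autoref{sec:s1} and the earlier parts of \autoref{sec:s2}; no genuinely new argument is needed. I would proceed in three short stages.

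\emph{Chain of Tukey connections.} Since $\Ior$ is a directed preorder, a dominating $D\subseteq\Ior$ is exactly a cofinal subset, so the first inequality $\MAwf\leqT\prod_{I\in D}\Mwf_I$ is just \autoref{lem:b6}~\ref{lem:b6:2}. For the second inequality $\prod_{I\in D}\Mwf_I\leqT\Mwf^D$, take for each $I\in D$ a Tukey connection $(\Psi^I_-,\Psi^I_+)$ witnessing $\Mwf_I\leqT\Mwf$ (available from \autoref{lem:b3}~\ref{lem:b3:2} or \autoref{thm:b1}), and combine them coordinatewise into a product Tukey connection by the obvious pointwise rule $(A_I)_{I\in D}\mapsto(\Psi^I_-(A_I))_{I\in D}$ and $(M_I)_{I\in D}\mapsto(\Psi^I_+(M_I))_{I\in D}$.

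\emph{Instantiation of the ``in particular'' clause.} By \autoref{ex:c1}, $\cof(\Ior)=\dfrak$, so I can choose a cofinal $D\subseteq\Ior$ of size $\dfrak$, obtaining $\MAwf\leqT\Mwf^D\eqT\Mwf^\dfrak$ (the last equivalence being trivial since only $|D|$ matters for $w$-powers). Applying \autoref{ex:c4}~\ref{ex:c4:4} to the directed preorder $\la\Mwf,\subseteq\ra$---whose $\bfrak$- and $\dfrak$-numbers are $\add(\Mwf)$ and $\cof(\Mwf)$ respectively---yields $\Mwf\leqT\Cv_{[\cof(\Mwf)]^{<\add(\Mwf)}}$, and raising to the $\dfrak$-th power coordinatewise gives $\Mwf^\dfrak\leqT\Cv_{[\cof(\Mwf)]^{<\add(\Mwf)}}^\dfrak$.

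\emph{Reading off the cardinal inequalities.} From $\MAwf\leqT\Mwf^\dfrak$, \autoref{fct:a2} gives $\add(\Mwf)=\bfrak(\Mwf^\dfrak)\leq\bfrak(\MAwf)=\add(\MAwf)$ and $\cof(\MAwf)=\dfrak(\MAwf)\leq\dfrak(\Mwf^\dfrak)$; concatenating with the second Tukey inequality then yields
\[
\dfrak(\Mwf^\dfrak)\leq\dfrak\bigl(\Cv_{[\cof(\Mwf)]^{<\add(\Mwf)}}^\dfrak\bigr)=\cov\bigl(([\cof(\Mwf)]^{<\add(\Mwf)})^\dfrak\bigr),
\]
where the last equality is the notational convention of \autoref{def:a16}. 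I do not anticipate any real obstacle: the only ``moving part'' is recognising that the $\dfrak$ exponent comes from $\cof(\Ior)=\dfrak$, after which everything is an application of the general Tukey framework of \autoref{sec:s1}.
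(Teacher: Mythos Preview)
Your proposal is correct and follows precisely the approach the paper intends: the paper states this as a corollary with no proof, since every step is an immediate combination of \autoref{lem:b6}~\ref{lem:b6:2}, $\Mwf_I\leqT\Mwf$ from \autoref{thm:b1} (or \autoref{lem:b3}), $\cof(\Ior)=\dfrak$, and the general Tukey machinery of \autoref{sec:s1} (in particular \autoref{ex:c4}~\ref{ex:c4:4} and \autoref{fct:a2}). Your three-stage breakdown makes explicit exactly these ingredients.
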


Next, we show how $\Mwf_I$ is related with the measure zero ideal. From now on, $\Lb$ denotes the Lebesgue measure on $\cantor$.

\begin{lemma}\label{lem:b5}
For $I\in\Ior$, the following statements are equivalent.
\begin{multicols}{3}
\begin{enumerate}[label=\normalfont (\roman*)]
    \item\label{b5i} $\Mwf_I\subseteq\Nwf$.
    \item\label{b5ii} $\Mwf_I\subseteq\Ewf$. 
    \item\label{b5iii} $\sum_{k<\omega}2^{-|I_k|}<\infty$.
\end{enumerate}
\end{multicols}
    
\end{lemma}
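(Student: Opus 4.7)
My plan is to go through the cycle of implications, reducing to representative elements $A^I_{J,y}$ and exploiting \autoref{cor:b1} to assume $J$ is coarser than $I$. The equivalence \ref{b5i}$\Leftrightarrow$\ref{b5ii} is almost free. Indeed, \ref{b5ii}$\Rightarrow$\ref{b5i} follows from $\Ewf \subseteq \Nwf$, and for \ref{b5i}$\Rightarrow$\ref{b5ii} note that by \autoref{def:b2} every $X \in \Mwf_I$ is contained in some $A^I_{J,y}$, which is $F_\sigma$; if \ref{b5i} holds then $A^I_{J,y}$ is an $F_\sigma$ set of Lebesgue measure zero, so it is a countable union of closed null sets and hence lies in $\Ewf$. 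So it suffices to prove \ref{b5iii}$\Leftrightarrow$\ref{b5i}.

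For \ref{b5iii}$\Rightarrow$\ref{b5i}, by \autoref{cor:b1} it is enough to show $A^I_{J,y}\in\Nwf$ when $J$ is coarser than $I$. Put $D_k := \set{x \in \cantor}{x \upharpoonright I_k = y \upharpoonright I_k}$, so $\Lb(D_k) = 2^{-|I_k|}$ and under \ref{b5iii} we have $\sum_k \Lb(D_k) < \infty$. If $x \in A^I_{J,y}$, then for cofinitely many $n$ there is $k_n$ with $I_{k_n} \subseteq J_n$ and $x \in D_{k_n}$; since the $J_n$'s are pairwise disjoint, the $k_n$'s are pairwise distinct, so $x \in \limsup_k D_k$. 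Hence $A^I_{J,y} \subseteq \limsup_k D_k$, which is Lebesgue null by the Borel--Cantelli lemma.

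For $\neg$\ref{b5iii}$\Rightarrow\neg$\ref{b5i}, suppose $\sum_k 2^{-|I_k|} = \infty$. Build a coarsening $J$ of $I$ so that, writing $K_n := \set{k<\omega}{I_k \subseteq J_n}$, we have $\sum_{k \in K_n} 2^{-|I_k|} \geq 2(n+1)$ for all $n$; this is possible by divergence of the original series. Fix $y := \bar 0$ and set $E_n := \bigcup_{k \in K_n} D_k$, so that $A^I_{J,y} = \liminf_n E_n$. Because each $E_n$ depends only on coordinates inside $J_n$ and the $J_n$'s are disjoint, the events $\seq{E_n}{n<\omega}$ are mutually Lebesgue-independent, and using $1-u \leq e^{-u}$,
\[
\Lb(\cantor \setminus E_n) = \prod_{k \in K_n}\bigl(1 - 2^{-|I_k|}\bigr) \leq \exp\Bigl(-\sum_{k \in K_n} 2^{-|I_k|}\Bigr) \leq e^{-2(n+1)}.
\]
Since $\sum_n e^{-2(n+1)} < \infty$, the tail products $\prod_{n \geq m}\Lb(E_n)$ tend to $1$ as $m\to\infty$, so $\Lb(A^I_{J,\bar 0}) > 0$. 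Thus $A^I_{J,\bar 0} \in \Mwf_I \setminus \Nwf$, which refutes \ref{b5i}.

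The only genuine technical step is this last one: the coarsening $J$ must be chosen so that the cumulative weights $\sum_{k\in K_n}2^{-|I_k|}$ diverge fast enough for the independence of the $E_n$'s to yield an infinite product whose tails stay bounded away from zero. The forward direction \ref{b5iii}$\Rightarrow$\ref{b5i} is, by comparison, a routine instance of Borel--Cantelli, and the bridge to $\Ewf$ uses only that $A^I_{J,y}$ is $F_\sigma$.
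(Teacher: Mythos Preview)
Your proof is correct and follows essentially the same approach as the paper's: both directions reduce to measure estimates on the generators $A^I_{J,y}$, using the same exponential bound $\prod_{k\in K_n}(1-2^{-|I_k|})\leq e^{-\sum_{k\in K_n}2^{-|I_k|}}$ after coarsening $J$. The only cosmetic difference in $\neg$\ref{b5iii}$\Rightarrow\neg$\ref{b5i} is that the paper shows $\Lb(\cantor\setminus A^I_{J,y})=0$ directly via subadditivity (no independence across the $E_n$'s needed), whereas you invoke independence of the $E_n$'s to get $\Lb(A^I_{J,\bar0})>0$ --- in fact your tail-product argument also gives full measure, which is exactly what the paper exploits in \autoref{lem:b20}.
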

\begin{proof}
Since $\Mwf_I$ is generated by $F_\sigma$ sets,~\ref{b5i}${}\Leftrightarrow{}$\ref{b5ii} is clear. So 
it suffices to prove~\ref{b5i}${}\Leftrightarrow{}$\ref{b5iii}. Notice that \[A_{J,y}^I=\bigcup_{m<\omega}\bigcap_{n\geq m}\bigcup_{\substack{k<\omega\\ I_k\subseteq J_n}}[y{\restriction}I_k].\]   
$``\Leftarrow"$ Assume that $\sum_{k<\omega}2^{-|I_k|}<\infty$. Let $X\in\Mwf_I$, so there are $J\in\Ior$ and $y\in\cantor$ such that $X\subseteq A_{J,y}^I$. To see that $X\in\Nwf$, it is sufficient to show that 
\[\Lb\left(\bigcup_{m<\omega}\bigcap_{n\geq m}\bigcup_{\substack{k<\omega\\ I_k\subseteq J_n}}[y{\restriction}I_k]\right)=0.\] 
Observe that 
\[\Lb\left(\bigcup_{\substack{k<\omega\\ I_k\subseteq J_n}}[y{\restriction}I_k]\right)\leq \sum_{\substack{k<\omega\\ I_k\subseteq J_n}}2^{-|I_k|}.\]
Since $\sum_{k<\omega}2^{-|I_k|}<\infty$, $\bigcap_{n\geq m}\bigcup_{\substack{k<\omega\\ I_k\subseteq J_n}}[y{\restriction}I_k]$ has measure zero. Therefore, $A_{J,y}^I\in\Nwf$. 

$``\Rightarrow"$ Assume that $\sum_{k<\omega}2^{-|I_k|}=\infty$. Let $J\in\Ior$ be coarser than $I$ such that \[\sum_{\substack{k<\omega\\ I_k\subseteq J_n}}2^{-|I_k|}\geq n.\] We prove that $A_{J,y}^I\notin \Nwf$, in fact $\Lb(\cantor\smallsetminus A_{J,y}^I)=0$.
Notice that 
\[\cantor\smallsetminus A_{J,y}^I=\bigcap_{m<\omega}\bigcup_{n\geq m}\bigcap_{\substack{k<\omega\\ I_k\subseteq J_n}}(\cantor\menos[y{\restriction}I_k]).\]
To show $\Lb(\cantor\smallsetminus A_{J,y}^I)=0$, it is enough to see that
\[\lim_{m\to\infty}\Lb\left(\bigcup_{n\geq m}\bigcap_{\substack{k<\omega\\ I_k\subseteq J_n}}\cantor\smallsetminus[y{\restriction}I_k]\right)=0.\]
Indeed,
\begin{align*}
\Lb\left(\bigcap_{\substack{k<\omega\\ I_k\subseteq J_n}}\cantor\smallsetminus[y{\restriction}I_k]\right)&=\prod_{\substack{k<\omega\\ I_k\subseteq J_n}}(1-2^{-|I_n|})\\
&\leq e^{-\sum_{\substack{k<\omega\\ I_k\subseteq J_n}}2^{-|I_k|}}\\
&\leq e^{-n}
\end{align*}
Since $\sum_{n<\omega} e^{-n}<\infty$, we conclude the desired limit. 
\end{proof}

The previous proof allows to say a bit more in the case $\sum_{k<\omega}2^{-|I_k|}=\infty$.

\begin{lemma}\label{lem:b20}
If $\sum_{k<\omega}2^{-|I_k|}=\infty$ then $\Cv_\Nwf\leqT\Cv_{\Mwf_I}^\perp$. In particular, $\cov(\Nwf)\leq\non(\Mwf_I)$ and $\cov(\Mwf_I)\leq\non(\Nwf)$.   
\end{lemma}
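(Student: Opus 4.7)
The plan is to build an explicit Tukey connection $\Cv_\Nwf \leq_{\mathrm{T}} \Cv_{\Mwf_I}^\perp$, using the exact ``dual'' information already extracted in the proof of \autoref{lem:b5}. Recall that the hypothesis $\sum_{k<\omega}2^{-|I_k|}=\infty$ allowed us to pick a $J\in\Ior$ coarser than $I$ with $\sum_{I_k\subseteq J_n}2^{-|I_k|}\geq n$, and the computation there showed that for every $y\in\cantor$ the set $\cantor\smallsetminus A^I_{J,y}$ has Lebesgue measure zero. I would fix such a $J$ at the outset and reuse it throughout.

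The crucial (but elementary) observation is that the defining clause of $A^I_{J,y}$, namely $\forall^\infty n<\omega\, \exists k<\omega\colon I_k\subseteq J_n$ and $x\restriction I_k=y\restriction I_k$, is symmetric in $x$ and $y$: thus $x\in A^I_{J,y}$ iff $y\in A^I_{J,x}$. I would then define
\[
\Psi_-\colon \cantor \longrightarrow \Mwf_I,\qquad \Psi_-(x):=A^I_{J,x},
\]
which lies in $\Mwf_I$ by \autoref{def:b2}, and
\[
\Psi_+\colon \cantor \longrightarrow \Nwf,\qquad \Psi_+(y):=\cantor\smallsetminus A^I_{J,y},
\]
which lies in $\Nwf$ by the calculation recalled above.

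To check that $(\Psi_-,\Psi_+)$ is a Tukey connection, unwinding the definition of $\Cv_{\Mwf_I}^\perp$ it suffices to verify the contrapositive: if $x\notin\Psi_+(y)$, i.e.\ $x\in A^I_{J,y}$, then by symmetry $y\in A^I_{J,x}=\Psi_-(x)$. This is exactly what is needed. The ``in particular'' inequalities then follow from the standard facts $\bfrak(\Cv_\Iwf)=\non(\Iwf)$, $\dfrak(\Cv_\Iwf)=\cov(\Iwf)$, the duality $\bfrak(\Rbf^\perp)=\dfrak(\Rbf)$ and $\dfrak(\Rbf^\perp)=\bfrak(\Rbf)$, and the monotonicity of $\bfrak$ and $\dfrak$ under $\leq_{\mathrm{T}}$: from $\Cv_\Nwf\leq_{\mathrm{T}}\Cv_{\Mwf_I}^\perp$ one reads off $\cov(\Nwf)=\dfrak(\Cv_\Nwf)\leq\dfrak(\Cv_{\Mwf_I}^\perp)=\non(\Mwf_I)$ and $\cov(\Mwf_I)=\bfrak(\Cv_{\Mwf_I}^\perp)\leq\bfrak(\Cv_\Nwf)=\non(\Nwf)$.

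There is no real obstacle; the whole content is the symmetry of the relation defining $A^I_{J,y}$, combined with the measure-theoretic estimate already done in \autoref{lem:b5}. The only thing worth being careful about is matching the dualization conventions from the definition of $\Rbf^\perp$, so that $\Psi_-$ really lands in $\Mwf_I$ and $\Psi_+$ in $\Nwf$ in the correct order.
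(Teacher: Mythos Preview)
Your proof is correct and yields the same Tukey connection as the paper's: the paper fixes $X:=A^I_{J,y_0}$ (for some $y_0$) and uses the translation-invariance of $\Mwf_I$ and $\Nwf$ via Rothberger's classical maps $x\mapsto x+X$ and $y\mapsto y+(\cantor\smallsetminus X)$, which coincide with your $\Psi_-,\Psi_+$ after a harmless shift by $y_0$ (since $x+A^I_{J,y_0}=A^I_{J,x+y_0}$). Your symmetry observation $x\in A^I_{J,y}\Leftrightarrow y\in A^I_{J,x}$ is exactly the concrete content of the translation-invariance argument in this case, so the two presentations are equivalent.
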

\begin{proof}
    In the proof of \autoref{lem:b5}, we produced a set $X\in\Mwf_I$ such that $\cantor\menos X\in\Nwf$. Since both ideals $\Mwf_I$ and $\Nwf$ are translation invariant, we obtain $\Cv_\Nwf\leqT\Cv_{\Mwf_I}^\perp$ as in Rothberger's proof of $\Cv_\Nwf\leqT \Cv_{\Mwf}^\perp$, namely, using the maps $x\mapsto x+X$ and $y\mapsto y+(\cantor\menos X)$.
\end{proof}

Using~\autoref{lem:b5}, and~\ref{lem:b20}, \autoref{thm:a6} is concluded. 

We finish this section with connections between our ideals and the relational systems introduced in~\cite{CMR2} to study $\Cv_{\MAwf}$.

\begin{definition}[{\cite[Def.~2.7]{CMR2}}]\label{def:b3} Fix $b\in\baire$.
\begin{enumerate}[label=(\arabic*)]
\item\label{def:b3:1} For  $I\in\Ior$ and $f, h\in\baire$, define 
    \[f  \sqrb (I,h)\textrm{\ iff\ }\forall^\infty n\in\omega\,\exists k\in I_n\colon f(k)=h(k).\]
    \item\label{def:b3:2} Define the relational system $\Rbf_b:=\la\prod b,\Ior\times\prod b,\sqrb\ra$, where $\prod b:=\prod_{n<\omega}b(n)$.
\end{enumerate}
In the context of $\Rbf_b$, we will always consider that $b(n)>0$ for all $n$, even if we just write ``$b\in\baire$".
\end{definition}

Notice that, for fixed $(I,h)\in \Ior\times \prod b$, $\set{f\in\prod b}{f\sqrb (I,h)}$ is meager whenever $b\geq^* 2$, so $\Cv_\Mwf\leqT\Rbf_b$, which implies $\bfrak(\Rbf_b)\leq\non(\Mwf)$ and $\cov(\Mwf)\leq \dfrak(\Rbf_b)$.\footnote{In~\cite{cardona25}, these are denoted by $\bfrak_{b}^\mathsf{eq}$ and $\dfrak_{b}^\mathsf{eq}$, respectively.} On the other hand, if $b\ngeq^*2$ then we can find some $(I,h)\in \Ior\times \prod b$ such that $f\sqrb (I,h)$ for all $f\in\prod b$, so $\dfrak(\Rbf_b) = 1$ and $\bfrak(\Rbf_b)$ is undefined.

\begin{fact}\label{fct:b2}
For $b\in\omega^\omega$, 
$\Rbf_b\eqT\la\prod b,\Ior\times\baire,\sqrb\ra$. 
As a consequence, if $b'\in\omega^\omega$ and
$b\leq^* b'$, then $\Rbf_b\leqT\Rbf_{b'}$. In particular, $\bfrak(\Rbf_{b'}) \leq \bfrak(\Rbf_b)$ and $\dfrak(\Rbf_b)\leq \dfrak(\Rbf_{b'})$.   
\end{fact}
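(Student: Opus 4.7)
The plan is to prove the Tukey equivalence first, from which the ``consequence'' and the cardinal inequalities will follow mechanically. For the direction $\Rbf_b\leqT\la\prod b,\Ior\times\baire,\sqrb\ra$, I would take $\Psi_-$ to be the identity on $\prod b$ and $\Psi_+$ to be the inclusion $\Ior\times\prod b\hookrightarrow\Ior\times\baire$; since the relation $\sqrb$ has exactly the same meaning in both systems, no computation is required. For the reverse direction, $\la\prod b,\Ior\times\baire,\sqrb\ra\leqT\Rbf_b$, I would use $\Psi_-=\id$ on $\prod b$ together with the truncation $\Psi_+(I,h):=(I,h')$ defined by $h'(k):=h(k)$ if $h(k)<b(k)$ and $h'(k):=0$ otherwise. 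The verification is immediate: if $f\in\prod b$ and $f\sqrb(I,h)$, then for cofinitely many $n$ there is some $k\in I_n$ with $f(k)=h(k)$, and since $f(k)<b(k)$ this forces $h'(k)=h(k)=f(k)$; hence $f\sqrb\Psi_+(I,h)$.

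For the consequence, assume $b\leq^* b'$ and fix $N<\omega$ with $b(k)\leq b'(k)$ for all $k\geq N$. Using the Tukey equivalence just proved for both $b$ and $b'$, it suffices to exhibit a Tukey connection from $\la\prod b,\Ior\times\baire,\sqrb\ra$ into $\la\prod b',\Ior\times\baire,\sqrb\ra$. Define $\Psi_-\colon\prod b\to\prod b'$ by $\Psi_-(f)(k):=f(k)$ for $k\geq N$ and $\Psi_-(f)(k):=0$ for $k<N$; this is well-defined because $f(k)<b(k)\leq b'(k)$ whenever $k\geq N$. Let $\Psi_+$ be the identity on $\Ior\times\baire$. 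If $\Psi_-(f)\sqrb(I,h)$, then for all $n$ large enough that $\min I_n\geq N$ there is $k\in I_n$ with $\Psi_-(f)(k)=h(k)$, and by the choice of $N$ this yields $f(k)=\Psi_-(f)(k)=h(k)$, so $f\sqrb(I,h)$. The cardinal inequalities $\bfrak(\Rbf_{b'})\leq\bfrak(\Rbf_b)$ and $\dfrak(\Rbf_b)\leq\dfrak(\Rbf_{b'})$ are then immediate from the monotonicity of $\bfrak$ and $\dfrak$ under $\leqT$ already recorded in the paper.

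The argument is essentially bookkeeping rather than a substantive result; there is no genuine obstacle. The only mild subtlety is recognizing that eventual dominance $b\leq^* b'$ (rather than pointwise dominance) suffices precisely because $\sqrb$ only looks at cofinitely many intervals $I_n$, so the finite initial segment on which $b$ may exceed $b'$ can be freely overwritten by zeros when transferring an element of $\prod b$ into $\prod b'$.
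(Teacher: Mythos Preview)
Your argument is essentially correct, and the paper itself states this as a ``Fact'' without proof, so there is nothing to compare against beyond noting that your proof is the natural one. However, you have swapped the two directions of the Tukey equivalence. Recall the paper's convention: for $\Rbf\leqT\Rbf'$ one needs $\Psi_+\colon Y'\to Y$, going \emph{from} the target's second coordinate \emph{back to} the source's. Thus for $\Rbf_b\leqT\la\prod b,\Ior\times\baire,\sqrb\ra$ the map $\Psi_+$ must send $\Ior\times\baire$ into $\Ior\times\prod b$, and this is where your truncation $h\mapsto h'$ belongs; conversely, the inclusion $\Ior\times\prod b\hookrightarrow\Ior\times\baire$ witnesses the direction $\la\prod b,\Ior\times\baire,\sqrb\ra\leqT\Rbf_b$. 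Your verification that truncation works (``since $f(k)<b(k)$ this forces $h'(k)=h(k)=f(k)$'') is exactly right; it simply establishes the opposite inequality from the one you labeled it with. Once you interchange the two labels, the first paragraph is complete.

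The second paragraph, establishing $\Rbf_b\leqT\Rbf_{b'}$ from $b\leq^* b'$ via the auxiliary systems with $\baire$ in the second coordinate, is correct as written: zeroing out $f$ below $N$ lands you in $\prod b'$, the identity on $\Ior\times\baire$ is available precisely because you passed to the enlarged systems, and the relation $\sqrb$ ignores finitely many intervals so the modification below $N$ is harmless. Your closing remark about why $\leq^*$ suffices is also the right intuition.
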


In~\cite{CMR2} we have proved that $\Rbf_b\leqT \Cv_{\MAwf} \leqT \prod_{b\in D}\Rbf_b$ for any $b\in\baire$ and any dominating $D\subseteq\baire$, and also that $\dfrak(\Rbf_b)\leq\cof(\Mwf)$. These facts can be deduced from the following result that connects $\Rbf_{b}$ and $\Cv_{\Mwf_I}$.

\begin{lemma}\label{lem:b8}
    For $I\in\Ior$, if $\forall^ \infty k\colon b(k)=2^{|I_k|}$ then $\Rbf_{b}\eqT\Cv_{\Mwf_I}$.
\end{lemma}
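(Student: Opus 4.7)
The plan is to establish both Tukey-connections by means of a single natural identification between the codes. Fix $k_0<\omega$ so that $b(k)=2^{|I_k|}$ for every $k\geq k_0$, and choose, once and for all, a bijection $\varphi_k\colon b(k)\to {}^{I_k}2$ for each such $k$. Informally, $\prod b$ (restricted to coordinates $k\geq k_0$) will encode elements of $\cantor$ via the concatenation $k\mapsto \varphi_k(f(k))$, and the partitions $J'$ will encode partitions $J$ coarser than $I$ by unions of the corresponding $I_k$'s. Throughout, modifications on finitely many coordinates are harmless.

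\textbf{Direction $\Rbf_b\leqT \Cv_{\Mwf_I}$.} Define $\Psi_-\colon \prod b\to \cantor$ by $\Psi_-(f)\restriction I_k:=\varphi_k(f(k))$ for $k\geq k_0$, and arbitrarily on the remaining $I_k$'s. Given $A\in\Mwf_I$, use \autoref{cor:b1} to choose a witness $(J,y)\in\Ior\times\cantor$ with $J$ coarser than $I$ (so every $J_n$ is a union of consecutive $I_k$'s), and set
\[
J'_n:=\set{k<\omega}{I_k\subseteq J_n},\quad h(k):=\varphi_k^{-1}(y\restriction I_k)\ \text{for } k\geq k_0,
\]
with $h(k)=0$ otherwise; then $J'\in\Ior$ (each $J'_n$ is a non-empty interval since $J_n$ and the $I_k$'s are intervals), and we set $\Psi_+(A):=(J',h)$. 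If $\Psi_-(f)\in A\subseteq A^I_{J,y}$, then for almost every $n$ there exists $k$ with $I_k\subseteq J_n$ and $\Psi_-(f)\restriction I_k=y\restriction I_k$; for large enough $n$ we have $k\geq k_0$, giving $\varphi_k(f(k))=y\restriction I_k$ and hence $f(k)=h(k)$ with $k\in J'_n$, so $f \sqrb (J',h)$.

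\textbf{Direction $\Cv_{\Mwf_I}\leqT \Rbf_b$.} Define $\Psi_-\colon \cantor\to \prod b$ by $\Psi_-(x)(k):=\varphi_k^{-1}(x\restriction I_k)$ for $k\geq k_0$ and $\Psi_-(x)(k):=0$ otherwise. Given $(J',h)\in\Ior\times\prod b$, set
\[
J_n:=\bigcup_{k\in J'_n} I_k,\quad y\restriction I_k:=\varphi_k(h(k))\ \text{for } k\geq k_0,
\]
with $y\restriction I_k$ arbitrary for $k<k_0$. Since the $J'_n$'s are consecutive non-empty intervals of indices and the $I_k$'s are consecutive non-empty intervals of integers, each $J_n$ is a consecutive non-empty interval and $J\in\Ior$ (in fact, coarser than $I$). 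Put $\Psi_+(J',h):=A^I_{J,y}\in\Mwf_I$. If $\Psi_-(x)\sqrb(J',h)$, then for almost every $n$ there exists $k\in J'_n$ with $\Psi_-(x)(k)=h(k)$; for large enough $n$ we have $k\geq k_0$, so $\varphi_k^{-1}(x\restriction I_k)=\varphi_k^{-1}(y\restriction I_k)$, i.e.\ $x\restriction I_k=y\restriction I_k$, while $I_k\subseteq J_n$ by construction, so $x\in A^I_{J,y}=\Psi_+(J',h)$.

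The only subtle step, which is the main point to verify carefully, is that the combinatorial translation between $J$ (coarser than $I$) and $J'\in\Ior$ is a genuine bijection at the level of partitions: starting from coarser $J$, the set of indices $\{k:I_k\subseteq J_n\}$ is automatically a consecutive non-empty interval because $J_n$ and the $I_k$'s are intervals; conversely, $\bigcup_{k\in J'_n}I_k$ is an interval since $J'_n$ is. Once this is in hand, the Tukey properties follow directly from the defining relations of $A^I_{J,y}$ and $\sqrb$, and the finite exceptional set $\{k<k_0\}$ is absorbed into the ``almost all $n$'' quantifier in both directions.
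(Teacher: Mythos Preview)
Your proof is correct and follows essentially the same approach as the paper's: both arguments use the natural bijection between $\prod b$ and $\cantor$ given by concatenating the blocks $\varphi_k(f(k))$, together with the bijection between partitions $J$ coarser than $I$ and arbitrary partitions $J'\in\Ior$ via $J'_n=\{k:I_k\subseteq J_n\}$, and then verify the Tukey connections directly from the definitions of $A^I_{J,y}$ and $\sqrb$. The only difference is cosmetic: the paper assumes without loss of generality that $b(k)=2^{|I_k|}$ for \emph{all} $k$ and identifies $b(k)$ with ${}^{I_k}2$ directly, whereas you carry the finite exceptional set $\{k<k_0\}$ and the bijections $\varphi_k$ through explicitly.
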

\begin{proof}
Assume wlog that, for all $k\in\omega$, $b(k)=2^{|I_k|}$, so $\prod b=2^I := \prod_{k<\omega}{}^{I_k}2$. 
Define 
\begin{align*}
f\colon 2^I & \longrightarrow \cantor \\
        \eta & \longmapsto  \bigcup_{n<\omega}\eta(n)
\end{align*}
Note that $f$ is a bijection. 

Let $\Ior_I$ be the set of $J\in\Ior$ coarser than $I$. 
Define the maps
\begin{align*}
g  \colon \Ior_I  &
                \longrightarrow \Ior \text{ and}                     \notag\\
h  \colon \Ior\times 2^I & \longrightarrow\Ior_I\times\cantor \notag
\intertext{by the assignments}
J & \longmapsto \seq{\set{k<\omega}{I_k\subseteq J_n}}{n<\omega} \text{ and} \\
(J,y) &\longmapsto \big(g^{-1}(J),f(y)\big),
\end{align*}
respectively. These maps are bijections as well.

To define $\Psi_+\colon\Mwf_I\to\Ior\times2^I$, for $X\in \Mwf_I$, pick an $(J,y)\in\Ior_I\times\cantor$ such that $X\subseteq A_{J,y}^I$ and put $\Psi_+(X):=h^{-1}(J,y)$. Notice that $f(\eta)\in A_{J,y}^I$ iff $\forall^\infty n\,\exists k\colon I_k\subseteq J_n$ and $\eta(k)=y{\restriction}I_k$, which is equivalent to $\forall^\infty n\,\exists k\in J_n'\colon\eta(k)=y{\restriction}I_k$ where $J'=g(J)$, i.e.\ $\eta\sqrb\Psi_+(X)$. Then $(f,\Psi_+)$ witnesses $\Rbf_b\leqT\Cv_{\Mwf_I}$.

To prove $\Cv_{\Mwf_I}\leqT\Rbf_b$, define 
\begin{align*}
\Psi'_+\colon\Ior\times2^I & \rightarrow \Mwf_I \\
        (J',\eta) & \mapsto A_{(g^{-1}(J'),f(\eta))}^I
\end{align*}
It is clear that $(f^{-1},\Psi'_+)$ witnesses $\Cv_{\Mwf_I}\leqT\Rbf_b$. Indeed, $f^{-1}(x)\sqrb(J',\eta)$ iff $\forall^{\infty}n\,\exists k\in J'_n\colon x{\restriction} I_k=\eta(k)$, which is equivalent to $\forall^{\infty}n\,\exists k\colon I_k\subseteq J_n$ and $x{\restriction}I_k=\eta(k)$ , i.e.\ $x\in A_{(g^{-1}(J'),f(\eta))}^I$. 
\end{proof}

Because of the above result, we obtain:

\begin{corollary}
\ 
    \begin{enumerate}[label=\rm(\arabic*)]
        \item If $\forall^ \infty k\colon \log_2 b(k)\leq|I_k|$, then $\Rbf_{b}\leqT\Cv_{\Mwf_I}$.
        \item If $\forall^ \infty k\colon |I_k|\leq\log_2 b(k)$, then $\Cv_{\Mwf_I}\leqT\Rbf_{b}$.
    \end{enumerate}
\end{corollary}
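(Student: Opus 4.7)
The plan is to derive both parts as immediate consequences of \autoref{lem:b8} combined with \autoref{fct:b2}. Define the auxiliary function $b^*\in\baire$ by $b^*(k):=2^{|I_k|}$; since each $I_k$ is non-empty, $b^*(k)\geq 2$ for all $k$. Then \autoref{lem:b8} gives the central Tukey-equivalence
\[\Rbf_{b^*}\eqT\Cv_{\Mwf_I},\]
which reduces the corollary to a comparison between $b$ and $b^*$ under $\leq^*$.

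For part~(1), the hypothesis $\forall^\infty k\colon \log_2 b(k)\leq |I_k|$ is equivalent to $b(k)\leq 2^{|I_k|}=b^*(k)$ for all but finitely many $k$, i.e.\ $b\leq^* b^*$. By \autoref{fct:b2}, this yields $\Rbf_b\leqT \Rbf_{b^*}$, and composing with $\Rbf_{b^*}\eqT \Cv_{\Mwf_I}$ gives $\Rbf_b\leqT \Cv_{\Mwf_I}$.

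For part~(2), the hypothesis $\forall^\infty k\colon |I_k|\leq\log_2 b(k)$ translates into $b^*(k)=2^{|I_k|}\leq b(k)$ for all but finitely many $k$, i.e.\ $b^*\leq^* b$. Again by \autoref{fct:b2}, $\Rbf_{b^*}\leqT \Rbf_b$, and composing with $\Cv_{\Mwf_I}\eqT \Rbf_{b^*}$ gives $\Cv_{\Mwf_I}\leqT \Rbf_b$.

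There is no genuine obstacle: the corollary is a routine consequence of the key equivalence established in \autoref{lem:b8} together with the monotonicity of the relational systems $\Rbf_b$ in the parameter $b$ recorded in \autoref{fct:b2}. The only (trivial) point to verify is that $b^*$ lies in the admissible range $b^*(k)>0$ (indeed $b^*(k)\geq 2$), which is automatic from $I\in\Ior$.
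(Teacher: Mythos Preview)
Your proof is correct and matches the paper's intended approach: the corollary is stated immediately after \autoref{lem:b8} with only the phrase ``Because of the above result, we obtain'', so the paper also treats it as a direct consequence of the equivalence $\Rbf_{b^*}\eqT\Cv_{\Mwf_I}$ (for $b^*(k)=2^{|I_k|}$) combined with the monotonicity in \autoref{fct:b2}.
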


\begin{corollary}
    $\min_{b\in\baire}\bfrak(\Rbf_b)=\minnon(\vec\Mwf)$ and $\sup_{b\in\baire}\dfrak(\Rbf_b)=\supcov(\vec\Mwf)$.\footnote{The equality $\min_{b\in\baire}\bfrak(\Rbf_b)=\non(\MAwf)$ is originally due to Bartoszy\'nski and Judah~\cite{bartJudah}.}
\end{corollary}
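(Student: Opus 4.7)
The plan is to combine \autoref{lem:b8} (which gives $\Rbf_b\eqT\Cv_{\Mwf_I}$ whenever $b(k)=2^{|I_k|}$) with the first item of the corollary immediately following it (which gives $\Rbf_b\leqT\Cv_{\Mwf_I}$ whenever $\log_2 b(k)\leq|I_k|$ for all but finitely many $k$). Both equalities are then obtained by matching the parameter $b$ with the parameter $I$ by means of the identification $|I_k|\leftrightarrow\log_2 b(k)$.

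For the inequalities $\min_{b\in\baire}\bfrak(\Rbf_b)\leq\minnon(\vec\Mwf)$ and $\sup_{b\in\baire}\dfrak(\Rbf_b)\geq\supcov(\vec\Mwf)$, I would fix an arbitrary $I\in\Ior$ and set $b(k):=2^{|I_k|}$ for every $k<\omega$. Then $b\in\baire$ with $b\geq 2$, and \autoref{lem:b8} gives $\Rbf_b\eqT\Cv_{\Mwf_I}$; hence $\bfrak(\Rbf_b)=\non(\Mwf_I)$ and $\dfrak(\Rbf_b)=\cov(\Mwf_I)$. Letting $I$ range over $\Ior$ immediately yields both inequalities.

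For the converse inequalities $\minnon(\vec\Mwf)\leq\min_{b\in\baire}\bfrak(\Rbf_b)$ and $\sup_{b\in\baire}\dfrak(\Rbf_b)\leq\supcov(\vec\Mwf)$, I would fix an arbitrary $b\in\baire$ (which, as noted in the paper, implicitly satisfies $b\geq^* 2$ so that $\Rbf_b$ is non-degenerate) and partition $\omega$ into consecutive non-empty intervals $I=\seq{I_k}{k<\omega}$ with $|I_k|\geq\lceil\log_2 b(k)\rceil$ for all sufficiently large $k$; any finitely many small indices can be absorbed into a single initial block. Then $\log_2 b(k)\leq|I_k|$ for almost every $k$, so by the first item of the corollary following \autoref{lem:b8} we get $\Rbf_b\leqT\Cv_{\Mwf_I}$. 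The Tukey connection converts this into $\non(\Mwf_I)\leq\bfrak(\Rbf_b)$ and $\dfrak(\Rbf_b)\leq\cov(\Mwf_I)$, and hence $\minnon(\vec\Mwf)\leq\bfrak(\Rbf_b)$ and $\dfrak(\Rbf_b)\leq\supcov(\vec\Mwf)$, as desired.

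There is no genuine obstacle here: once the previous lemma and corollary are in place, the argument is essentially the direct translation between the two parameter families. The only mild technical point is the elementary bookkeeping needed to verify that the partition $I$ extracted from $b$ really belongs to $\Ior$.
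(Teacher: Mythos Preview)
Your proposal is correct and is precisely the argument the paper intends: the corollary is stated without proof because it follows immediately from \autoref{lem:b8} and the corollary after it, by passing back and forth between $b$ and $I$ via $b(k)\leftrightarrow 2^{|I_k|}$, exactly as you describe. The only cosmetic point is that the degenerate case $b\ngeq^*2$ (where $\bfrak(\Rbf_b)$ is undefined and $\dfrak(\Rbf_b)=1$) can simply be discarded from the minimum and does not affect the supremum, so your implicit restriction to $b\geq^*2$ is harmless.
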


\section{Consistency results}\label{sec:s4}

In this section, we prove our consistency results, i.e.\ \autoref{thm:a11}-\ref{thm:a99}. We start reviewing some notation.

\begin{definition}\label{def:a0}
Given a sequence of non-empty sets $b = \seq{b(n)}{n\in\omega}$ and $h\colon \omega\to\omega$, define
\begin{align*}
 \prod b &:= \prod_{n\in\omega}b(n),\textrm{\ and} \\
 \Swf(b,h) &:= \prod_{n\in\omega} [b(n)]^{\leq h(n)}.
\end{align*}
For two functions $x\in\prod b$ and $\varphi\in\Swf(b,h)$ write  
\[x\,\in^*\varphi\textrm{\ iff\ }\forall^\infty n\in\omega:x(n)\in \varphi(n).\]

Let $\Lc(b,h) := \la\prod b, \Scal(b,h),\in^*\ra$, which is a relational system. Denote $\blc_{b,h}=\bfrak(\Lc(b,h))$ and $\dlc_{b,h}=\dfrak(\Lc(b,h))$.\footnote{In~\cite{KM,CKM}, these are denoted by $\vfa_{b,h}^\forall$ and $\cfrak_{b,h}^\forall$, respectively.} Also define
\[\minLc:=\min\set{\blc_{b,\id}}{b\in\baire}\lay\supLc:= \sup\set{\dlc_{b,\id}}{b\in\baire}.\]
\end{definition}

Recall the following characterization. 
Pawlikowski~\cite{paw85} characterized $\add(\Nwf)$, while $\cof(\Nwf)$ is due to the first and second author~\cite[Lem.~3.11]{CM}.

\begin{theorem}\label{thm:a13}
$\add(\Nwf)=\min\{\bfrak,\minLc\}$ and $\cof(\Nwf)=\max\{\dfrak,\supLc\}$.
\end{theorem}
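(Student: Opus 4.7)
The plan is to reduce both equalities to Bartoszy\'nski's classical theorem that the null ideal is Tukey-equivalent to the unbounded localization relation: setting $\Lc^\infty := \Lc(\seq{\omega}{n<\omega}, \id) = \la\baire,\, \prod_{n<\omega}[\omega]^{\leq n},\, \in^*\ra$, one has $\add(\Nwf) = \bfrak(\Lc^\infty)$ and $\cof(\Nwf) = \dfrak(\Lc^\infty)$ (see~\cite[\S2.3]{bartJudah}). I would take this classical characterization as a black box (it is the non-trivial measure-theoretic ingredient) and reduce the theorem to the purely combinatorial identities
\[
\bfrak(\Lc^\infty) = \min\{\bfrak, \minLc\} \quad\text{and}\quad \dfrak(\Lc^\infty) = \max\{\dfrak, \supLc\},
\]
both of which I would prove by slalom-truncation.

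For $\bfrak(\Lc^\infty) \geq \min\{\bfrak, \minLc\}$, if $F \subseteq \baire$ has $|F| < \min\{\bfrak, \minLc\}$, use $|F| < \bfrak$ to find $b \in \baire$ with $f \leq^* b$ for every $f \in F$; modulo finite modifications, $F \subseteq \prod b$. Since $|F| < \minLc \leq \blc_{b, \id}$, some $\varphi \in \Scal(b, \id)$ localizes $F$, and then $\varphi \in \prod_{n<\omega} [\omega]^{\leq n}$ also localizes it in $\Lc^\infty$. For the converse, a minimum $\Lc^\infty$-unbounded $F$ is either $\leq^*$-unbounded (so $|F| \geq \bfrak$), or $\leq^*$-bounded by some $b \in \baire$; in the latter case, every $\varphi \in \Scal(b, \id)$ is also a $\Lc^\infty$-slalom, so $F$ must remain $\Lc(b, \id)$-unbounded, yielding $|F| \geq \blc_{b, \id} \geq \minLc$.

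Dually, for $\dfrak(\Lc^\infty) \leq \max\{\dfrak, \supLc\}$, fix a $\leq^*$-dominating $D \subseteq \baire$ of size $\dfrak$ and, for each $b \in D$, a family $\Phi_b \subseteq \Scal(b, \id)$ dominating $\Lc(b, \id)$ of size $\leq \supLc$. Then $\Phi := \bigcup_{b \in D}\Phi_b$ has size $\leq \max\{\dfrak, \supLc\}$ and $\Lc^\infty$-dominates: any $f \in \baire$ is eventually bounded by some $b \in D$, hence (modulo finite) in $\prod b$, where some $\varphi \in \Phi_b$ localizes it. Conversely, a dominating $\Phi$ in $\Lc^\infty$ yields a $\leq^*$-dominating family $\set{n \mapsto \max\varphi(n)+1}{\varphi\in\Phi}$ (so $|\Phi| \geq \dfrak$), and for every $b \in \baire$ the truncations $\set{\varphi\cap b}{\varphi \in \Phi}\subseteq \Scal(b,\id)$ dominate $\Lc(b,\id)$ (so $|\Phi| \geq \dlc_{b,\id}$ for each $b$, whence $|\Phi| \geq \supLc$). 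The main obstacle, of course, is Bartoszy\'nski's equivalence itself; everything above is elementary bookkeeping on bounds and truncations.
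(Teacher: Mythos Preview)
The paper does not actually prove this theorem; it is quoted with attributions to Pawlikowski~\cite{paw85} for the additivity and to~\cite[Lem.~3.11]{CM} for the cofinality. Your approach via Bartoszy\'nski's characterization $\Nwf \eqT \Lc^\infty$ followed by the combinatorial reduction is the standard one and is correct in outline, and the $\dfrak$-side is fully correct.

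There is, however, a genuine slip on the $\bfrak$-side: your ``converse'' argument does not prove $\bfrak(\Lc^\infty) \leq \min\{\bfrak, \minLc\}$; it proves $\bfrak(\Lc^\infty) \geq \min\{\bfrak, \minLc\}$ a second time. Taking a minimum-size $\Lc^\infty$-unbounded $F$ and showing that either $|F| \geq \bfrak$ or $|F| \geq \minLc$ by cases only yields $|F| \geq \min\{\bfrak, \minLc\}$, which is the inequality you already established in the preceding sentence. The missing direction is easy, and in fact you already wrote down both needed ideas in your $\dfrak$-converse: any $\leq^*$-unbounded family is $\Lc^\infty$-unbounded (if $\varphi$ localizes it then $n \mapsto \max\varphi(n)+1$ dominates it), and any $\Lc(b,\id)$-unbounded $F \subseteq \prod b$ is $\Lc^\infty$-unbounded (if some $\varphi\in\prod_n[\omega]^{\leq n}$ localized $F$ then so would $\varphi\cap b \in \Scal(b,\id)$). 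These give $\bfrak(\Lc^\infty) \leq \bfrak$ and $\bfrak(\Lc^\infty) \leq \blc_{b,\id}$ for every $b$, which is what you need.
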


Below, we strengthen the characterization of $\NAwf$ from \autoref{thm:a0}~\ref{thm:a0:NA}.

\begin{theorem}\label{thm:b20}
    Let $X\subseteq \cantor$ and $D\subseteq \Ior$ a dominating family. Then
    $X\in\NAwf$ iff, for all $I\in D$, there is some $\varphi\in\prod_{n\in \omega}\pts({}^{I_n}2)$ such that $\forall n\in \omega\colon |\varphi(n)|\leq n$ and $X\subseteq H_\varphi$.
\end{theorem}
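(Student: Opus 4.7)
The plan is to establish each direction separately. $(\Rightarrow)$ is immediate from Shelah's characterization \autoref{thm:a0}~\ref{thm:a0:NA}: if $X\in\NAwf$, the slalom condition holds for every $I\in\Ior$, in particular for every $I\in D\subseteq\Ior$.

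For $(\Leftarrow)$, I would invoke \autoref{thm:a0}~\ref{thm:a0:NA} in reverse and reduce to producing, for each $J\in\Ior$, a slalom $\varphi^J\in\prod_{n<\omega}\pts({}^{J_n}2)$ with $|\varphi^J(n)|\leq n$ and $X\subseteq H_{\varphi^J}$. Fix $J$. First, by the $\sqsubseteq$-dominating property of $D$, pick $I\in D$ above $J$ so that the hypothesis yields $\varphi^I$ with $|\varphi^I(n)|\leq n$ and $X\subseteq H_{\varphi^I}$. Then I would transfer $\varphi^I$ to $J$ by projecting onto those $J$-blocks $J_k$ entirely contained in some $I_n$: set
\[\varphi^J(k):=\set{s{\upharpoonright} J_k}{s\in\varphi^I(n)}.\]
The size estimate $|\varphi^J(k)|\leq n\leq k$ will follow from a counting argument using $J\sqsubseteq I$: for sufficiently large $n$, each $I_{n'}$ with $n'<n$ contains some $J_{k'}\subseteq I_{n'}$, yielding $n$ distinct indices $k'<k$ and hence $k\geq n$ (modulo a finite additive constant absorbed into finitely many exceptional values). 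The inclusion $X\subseteq H_{\varphi^J}$ then descends from $X\subseteq H_{\varphi^I}$ on those $k$ for which the corresponding $n$ is large.

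The hard part will be handling those $J$-blocks $J_k$ that \emph{straddle} a boundary between consecutive $I_n$ and $I_{n+1}$, on which the naive projection is undefined: each $I$-boundary can produce one straddling $k$, and in general infinitely many occur. I plan to address this by a preliminary $\sqsubseteq^+$-coarsening, replacing $J$ by $J^*\in\Ior$ with blocks $J^*_m:=J_{a_m}\cup\cdots\cup J_{a_{m+1}-1}$ for a fast-growing sequence $a_0<a_1<\cdots$ (so that $a_m\geq m$), then applying the dominating property to find $I\in D$ above $J^*$, and further forming $J^{**}\in\Ior$ whose blocks are the maximal unions of consecutive $J^*$-intervals contained in each $I_n$, with the unique straddling $J^*$-interval at each $I$-boundary absorbed into an adjacent $J^{**}$-block. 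This $J^{**}$ will satisfy $J\sqsubseteq^+J^{**}$ and $J^{**}\sqsubseteq^+ I$, eliminating all straddling and giving the counting inequality $n\leq m$ cleanly whenever $J^{**}_m\subseteq I_n$; so direct projection produces $\varphi^{J^{**}}$ with $|\varphi^{J^{**}}(m)|\leq m$, and the final pullback along $J\sqsubseteq^+ J^{**}$ (setting $\varphi^J(k):=\set{t{\upharpoonright} J_k}{t\in\varphi^{J^{**}}(m)}$ for the unique $m$ with $J_k\subseteq J^{**}_m$) yields the required $\varphi^J$, with the bound $|\varphi^J(k)|\leq m\leq a_m\leq k$ preserved from the coarsening.
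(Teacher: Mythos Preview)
Your overall architecture (coarsen the target partition, dominate by some $I\in D$, then project the $I$-slalom back down) matches the paper's, but your handling of straddling via $J^{**}$ does not work as written. After you absorb the straddling $J^*$-interval at an $I$-boundary into an adjacent $J^{**}$-block, that $J^{**}$-block now crosses the same $I$-boundary: you have moved the straddling up one level rather than eliminated it. Consequently the claim $J^{**}\sqsubseteq^+ I$ is false, and the premise ``$J^{**}_m\subseteq I_n$'' of your ``direct projection'' step fails for exactly those $m$ you were trying to protect. There is no way to partition $\omega$ into unions of $J^*$-intervals so that every block lies inside a single $I_n$, since the straddling $J^*$-intervals have to go somewhere.

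The paper confronts straddling head-on instead of trying to remove it. Given the target partition $I$, it first forms a coarsening $I'$ with \emph{quadratically} many pieces ($I'_n$ is a union of $(n+1)^2$ consecutive $I$-blocks), then picks $I^*\in D$ dominating $I'$. For an $I_k$ entirely inside $I^*_n$ it projects $\varphi^*(n)$; for an $I_k$ straddling $I^*_n$ and $I^*_{n+1}$ it sets
\[
\varphi(k):=\bigl\{\,s{\upharpoonright}(I_k\cap I^*_n)\cup t{\upharpoonright}(I_k\cap I^*_{n+1}) : s\in\varphi^*(n),\ t\in\varphi^*(n+1)\,\bigr\},
\]
so $|\varphi(k)|\leq n(n+1)$. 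The quadratic growth is exactly what makes this bound harmless: the counting shows $k\geq n(n+1)$ whenever $I_k$ meets $I^*_n$. Your linear choice $a_m\geq m$ would only give $k\gtrsim n$, which is not enough to absorb the product $n(n+1)$; replacing it by $a_m\geq m(m+1)$ and using the product definition on straddling $J_k$'s (dropping $J^{**}$ entirely) would repair your argument and make it essentially the paper's proof.
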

\begin{proof}
    Under \autoref{thm:a0}~\ref{thm:a0:NA}, we only need to prove ``$\Leftarrow$'', that is, the statement about $D$ implies the same statement but for $\Ior$ in the place of $D$. 

    Let $J\in\Ior$ be such that $|J_n|=(n+1)^2$ for all $n<\omega$, and fix $I\in \Ior$.  Set $I_n':=\bigcup_{k\in J_n}I_k$, so $I'\in\Ior$. 
    Since $D$ is $\Ior$-dominating, we can find an $I^*\in D$ such that, for all but finitely many $n<\omega$, $I^*$ contains at least two intervals from $I'$. Therefore, there is some $\varphi^*\in\Swf({}^{I^*}2,\id)$ such that $X\subseteq H_{\varphi^*}$. It is enough to construct a $\varphi\in\Swf({}^I2,\id)$ such that $H_{\varphi^*}\subseteq H_\varphi$.

    We first show that, for all but finitely many $n<\omega$ and for any $k<\omega$, if $I_k\subseteq I^*_n$ then $k\geq n(n+1)$: For any $\ell<\omega$, $I'_\ell$ is composed of $(\ell+1)^2$-many intervals from $I$, so $I_k\subseteq I'_\ell$ iff $\frac{\ell(\ell+1)(2\ell+1)}{6} \leq k <\frac{(\ell+1)(\ell+2)(2\ell+3)}{6}$. Now, for all but finitely many $n$, we have that whenever $I'_\ell$ intersects $I^*_n$, $\ell\geq n$ and $\ell(\ell+1)< \frac{(\ell-1)\ell(2\ell-1)}{6}$. Therefore, for any $I_k\subseteq I^*_n$, $n(n+1)\leq \ell'(\ell'+1)< \frac{(\ell'-1)\ell'(2\ell'-1)}{6} \leq k$, where $\ell'$ is the smallest $\ell$ such that $I'_\ell\cap I^*_n\neq\emptyset$. 

    Fix an $n$ as above. Whenever $I_k\subseteq I^*_n$, define $\varphi(k):= \set{t{\restriction}I_k}{t\in\varphi^*(n)}$, so $|\varphi(k)|\leq|\varphi^*(n)|\leq n\leq n(n+1) \leq k$; and whenever $I_k$ intersects both $I^*_n$ and $I^*_{n+1}$, define
    \[\varphi(k):=\set{s{\restriction}(I_k\cap I_n^*)\cup t{\restriction}(I_{k}\cap I_{n+1}^*)}{s\in\varphi^*(n), t\in\varphi^*(n+1)},\]
    so $|\varphi(k)|\leq|\varphi^*(n)|\cdot|\varphi^*(n+1)|\leq n(n+1)\leq k$. 

    So far, we have defined $\varphi(k)$ for all but finitely many $k<\omega$, so for the remaining ones set $\varphi(k):=\emptyset$. Therefore, $\varphi\in\Swf({}^I2,\id)$.

    Lastly, it is clear that $H_{\varphi^*}\subseteq H_{\varphi}$, as required.
\end{proof}

Thanks to the above, we have the following convenient Tukey connection.

\begin{theorem}\label{thm:b3}
For any dominating family $D\subseteq\omega^\omega$, we have $\Cv_{\NAwf}\leqT\prod_{b\in D}\Lc(b,\id)$.
\end{theorem}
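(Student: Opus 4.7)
The plan is to build the chain of Tukey reductions
\begin{align*}
\Cv_{\NAwf} &\leqT \prod_{I\in D^*}\Lc(2^{|I_\cdot|},\id) \leqT \prod_{I\in D^*}\Lc(\sigma(I),\id) \\
&\leqT \prod_{b\in D}\Lc(b,\id),
\end{align*}
where I take $D^*\subseteq\Ior$ to be a dominating family of size $\dfrak$, abbreviate $\seq{2^{|I_n|}}{n<\omega}$ by $2^{|I_\cdot|}$, and let $\sigma\colon D^*\hookrightarrow D$ be an injection satisfying $\sigma(I)(n)\geq 2^{|I_n|}$ for cofinitely many $n$.

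For the first reduction I fix bijections ${}^{I_n}2\cong 2^{|I_n|}$ and define $\Psi_-(x)(I)(n):=x\frestr I_n$ (under the identification), together with $\Psi_+(\bar\psi):=\bigcap_{I\in D^*}H_{\psi_I^{*}}$, where $\psi_I^{*}\in\Scal({}^I 2,\id)$ is the pullback of $\psi_I\in\Scal(2^{|I_\cdot|},\id)$. Since $D^*$ is dominating in $\Ior$, \autoref{thm:b20} ensures $\Psi_+(\bar\psi)\in\NAwf$, and the Tukey property follows from the equivalence $\Psi_-(x)(I)\in^{*}\psi_I\Leftrightarrow x\in H_{\psi_I^{*}}$. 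The second reduction applies coordinate-wise the standard fact $\Lc(b,\id)\leqT\Lc(b',\id)$ whenever $b\leq^{*}b'$ (analogous to \autoref{fct:b2}), realized through injections $2^{|I_n|}\hookrightarrow\sigma(I)(n)$ and the corresponding restriction of slaloms. The third is simply the projection $\prod_{b\in\sigma[D^*]}\Lc(b,\id)\leqT\prod_{b\in D}\Lc(b,\id)$ composed with the bijection $\sigma\colon D^*\to\sigma[D^*]$.

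The hard part will be producing $\sigma$. For each $I\in D^*$, set
\[
A_I:=\set{b\in D}{b(n)\geq 2^{|I_n|}\text{ for cofinitely many }n}.
\]
Since $D$ dominates $\baire$, $A_I$ is cofinal in $(D,\leq^{*})$, hence itself dominating, so $|A_I|\geq\dfrak$. Enumerating $D^*=\seq{I_\alpha}{\alpha<\dfrak}$, a transfinite recursion picks $\sigma(I_\alpha)\in A_{I_\alpha}\smallsetminus\set{\sigma(I_\beta)}{\beta<\alpha}$; this is possible because $|\alpha|<\dfrak\leq|A_{I_\alpha}|$ at every stage. This simultaneous injective selection---an infinite system of distinct representatives for a $\dfrak$-indexed family of large cofinal subsets of $D$---is the only combinatorially subtle point; once it is in place, the remainder is a routine assembly of the Tukey reductions displayed above.
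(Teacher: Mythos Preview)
Your proof is correct and follows essentially the same route as the paper's: both reduce to $\prod_{I\in D^*}\Lc({}^{I}2,\id)$ for an $\Ior$-dominating $D^*$ and invoke \autoref{thm:b20} to land in $\NAwf$. The paper compresses your second and third reductions into a one-line ``without loss of generality'' assertion that $D$ already consists of functions of the form ${}^{I}2$ with the corresponding $I$'s $\Ior$-dominating; your explicit construction of the injective selector $\sigma\colon D^*\hookrightarrow D$ via transfinite recursion (using $|A_I|\geq\dfrak$) is exactly what justifies that WLOG, so you have filled in a detail the paper leaves to the reader rather than taken a different path.
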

\begin{proof}
 Without loss of generality, we may assume that there is some $\Ior$-dominating family $D_0$ such that, for each $b\in D$, there is some $I\in D_0$ such that $b = {}^{I}2$, i.e.\ $b(n) = {}^{I_n}2$ for all $n<\omega$. 

We define $\Psi_-\colon \cantor\to\prod_{I\in D_0}{}^{I}2$ and
$\Psi_+\colon\prod_{I\in D_0}\Swf({}^{I}2,\id)\to \NAwf$ as follows: For $x\in\cantor$ and $I\in D_0$, set $x_I:=\seq{x{\restriction}I_n}{n\in\omega}$ and define $\Psi_-(x):=\seq{x_I}{I\in D_0}$; 
for $\bar\varphi=\seq{\varphi_I}{I\in D_0}\in\prod_{I\in D_0}\Swf({}^{I}2,\id)$ define \[\Psi_+(\bar\varphi):=\bigcap_{I\in D_0}\set{x\in\cantor}{\forall^\infty n\in\omega\colon x{\restriction}I_n\in\varphi_I(n)}.\]
Notice that $\Psi_+(\bar\varphi)\in\NAwf$ by \autoref{thm:b20}. It is clear that, for any $x\in\cantor$ and $\bar\varphi=\seq{\varphi_I}{I\in D_0}\in\prod_{I\in D_0}\Swf({}^{I}2,\id)$, $\Psi_-(x)\sqsubset^\otimes \bar\varphi$ implies $x\in\Psi_+(\bar\varphi)$. 
\end{proof}

We are ready to prove our consistency results. First, we review some terminology.

\begin{definition}\label{defposet} Define the following forcing notions 
\begin{enumerate}[label=\normalfont(\arabic*)]
    \item  Fix $b$ and $h$ as in~\autoref{def:a0}. Define $\Loc_{b,h}$ as the poset whose conditions are pairs $p=(\varphi_p,n_p)$ where $\varphi_p\in\Swf(b,h)$ and $n_p<\omega$ such that, for some $m_p<\omega$, $|\varphi_p(i)|\leq m_p$ for all $i<\omega$. The order is defined by $q\leq p$ iff $n_p\leq n_q$, $\varphi_q(i)=\varphi_p(i)$ for all $i<n_p$ and $\forall i<\omega\colon \varphi_p(i)\subseteq \varphi_q(i)$.
  
    \item\label{defposet2} \emph{Hechler forcing} is defined by  $\Dor=\omega^{<\omega}\times\baire$,
ordered by $(t,g)\leq(s,f)$ if $s\subseteq t$, $f\leq g$ and $f(i)\leq t(i)$ for all $i\in |t|\menos|s|$. This forcing is used to increase $\bfrak$. Recall that $\Dor$ is $\sigma$-centered.

  \item For an infinite cardinal  $\theta$, $\Fn_{<\theta}(A,B)$ denotes the poset of partial functions from $A$ into $B$ of size ${<}\theta$, ordered by $\supseteq$.
\end{enumerate}
\end{definition}

\begin{theorem}\label{thm:c0}
 Let $\theta\leq\lambda$ and $\nu$ be uncountable cardinals such that $\theta$ is regular, $\lambda^{\aleph_0} = \lambda$ and $\dfrak_\theta=\nu$.\footnote{Notice that no inequality is assumed between $\nu$ and $\lambda$.} Then there is a ccc poset forcing 
\[
    \add(\Nwf) = \cof(\Nwf) = \theta \leq \cov(\NAwf) \leq \nu \text{ and } \cfrak =\lambda.
\]
In particular, it is consistent with $\thzfc$ that $\cov(\NAwf) < \lambda$ (starting with $\nu<\lambda$).
\end{theorem}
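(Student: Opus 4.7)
The plan is to construct a ccc forcing $\Pbb$ making $\add(\Nwf)=\cof(\Nwf)=\theta$ and $\cfrak=\lambda$, then to chain Tukey reductions through \autoref{thm:b3} to derive $\cov(\NAwf)\leq\nu$. The lower bound $\theta\leq\cov(\NAwf)$ is immediate from $\add(\Nwf)\leq\add(\NAwf)\leq\cov(\NAwf)$ via \autoref{lem:a0}, and the ``in particular'' consistency clause follows by choosing parameters with $\nu<\lambda$.

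For the forcing, we would start from a ground model $V$ satisfying $\lambda^{\aleph_0}=\lambda$, $\dfrak_\theta^V=\nu$, and GCH below $\theta$ (after a standard preliminary preparation), and take a finite-support ccc iteration $\la\Pbb_\alpha,\Qnm_\alpha:\alpha<\theta\ra$ of length $\theta$ whose iterands alternate between localization forcing $\Loc_{\omega,\id}$ and Hechler forcing $\Dor$, with bookkeeping---or an auxiliary $\lambda$-sized side component folded in---to inflate the continuum to $\lambda$. By standard arguments, $|\Pbb|\leq\lambda$, $\Pbb$ is ccc, and, in $V^\Pbb$, the Hechler generics force $\bfrak=\dfrak=\theta$ while the localization generics force $\minLc=\supLc=\theta$; \autoref{thm:a13} then gives $\add(\Nwf)=\cof(\Nwf)=\theta$, and the bookkeeping delivers $\cfrak=\lambda$.

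Working in $V^\Pbb$, we have $\dfrak=\theta$, so we fix a dominating family $D=\{b_\alpha:\alpha<\theta\}\subseteq\baire$. By \autoref{thm:b3}, $\Cv_{\NAwf}\leqT\prod_{b\in D}\Lc(b,\id)$. Each $\Lc(b,\id)\leqT\Nwf$ via the classical Bartoszy\'nski Tukey connection that underlies the identity $\cof(\Nwf)=\max\{\dfrak,\supLc\}$ from \autoref{thm:a13}. Since $\add(\Nwf)=\cof(\Nwf)=\theta$ with $\theta$ regular, closing any cofinal family of null sets under ${<}\theta$-unions yields a $\subseteq$-increasing cofinal chain of length $\theta$ in $\Nwf$, whence $\Nwf\eqT\theta$. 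Consequently $\prod_{b\in D}\Lc(b,\id)\leqT\theta^{D}$, whose $\dfrak$-number---the dominating number of $({}^\theta\theta,\leq)$---is $\dfrak_\theta=\nu$. Therefore $\cov(\NAwf)\leq\dfrak(\prod_{b\in D}\Lc(b,\id))\leq\nu$.

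The main obstacle we anticipate is the forcing construction in the first step: pinning the four ``middle'' Cicho\'n characteristics to $\theta$ while simultaneously blowing $\cfrak$ to $\lambda$ rules out naive Cohen-product prefixes (Cohen reals would destroy $\cof(\Nwf)=\theta$) and requires delicate bookkeeping over $\lambda$-sized side data. The Tukey computation in the second step, by contrast, is essentially automatic once $\Nwf\eqT\theta$ is in hand.
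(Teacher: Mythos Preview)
Your Tukey computation in the second half is essentially the paper's argument: apply \autoref{thm:b3} to a dominating family $D$ of size $\theta$, reduce each factor $\Lc(b,\id)$ to $\theta$ (the paper does this directly from the iteration; you go through $\Nwf\eqT\theta$, but the content is the same), and conclude $\cov(\NAwf)\leq\dfrak(\theta^\theta)=\dfrak_\theta=\nu$. One small omission: you must check that $\dfrak_\theta=\nu$ survives the forcing; this is precisely where the ccc is used, since every new function $\theta\to\theta$ is pointwise bounded by a ground-model function.

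The forcing gap you flag is real for your construction as stated, but the fix is much simpler than the ``delicate bookkeeping over $\lambda$-sized side data'' you anticipate. A length-$\theta$ FS iteration of countable posets cannot produce $\cfrak=\lambda>\theta$, and tacking on $\lambda$ Cohen reals as a side product does destroy $\cof(\Nwf)=\theta$, exactly as you fear. The paper's resolution is simply to iterate for length $\lambda\theta$ (ordinal product) rather than $\theta$. Since $\cf(\lambda\theta)=\theta$, the cofinal $\theta$-sequence of stages $\seq{\lambda\rho}{\rho<\theta}$ still pins $\bfrak=\dfrak=\theta$ and $\minLc=\supLc=\theta$ (every real appears at some intermediate stage and is then localized by a later generic slalom), while the $\lambda\theta\geq\lambda$ many limit stages automatically add enough Cohen reals to push $\cfrak$ to $\lambda$. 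No side component or bookkeeping is needed; the Cohen reals that worried you are harmless here because they are interleaved with, and hence eventually caught by, the localization forcings.

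A minor difference: the paper uses $\Loc_{\dot d_\xi,\id}$ at each stage (localization relative to the just-added Hechler real) rather than $\Loc_{\omega,\id}$; this makes the Tukey equivalence $\Lc(d'_\rho,\id)\eqT\theta$ read off directly from the generics, but your choice would work as well. The GCH-below-$\theta$ hypothesis you add is unnecessary.
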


Here, $\dfrak_\theta$ is the canonical dominating number of ${}^\theta \theta$, which coincides with $\dfrak(\la\theta,\theta,\leq\ra^\theta)$. It is easy to force $\dfrak_\theta = \nu$, e.g.\ with $\Fn_{<\theta}(\nu,2)$ (by assuming $\theta^{<\theta}=\theta$ and $\nu^\theta = \nu$ in the ground model). 

\begin{proof}
Perform a FS iteration $\Por = \seq{\Por_\xi,\Qnm_\xi}{\xi<\lambda\theta}$ ($\lambda\theta$ as product of ordinal numbers) where $\Qor_\xi$ is a $\Por_\xi$-name of $\Dor\ast \Loc_{\dot d_\xi, \id}$ where $\dot d_\xi$ is the name of the dominating real over $V_\xi := V^{\Por_\xi}$ added by $\Dor$. Using the cofinaly-many Cohen and dominating reals $\seq{\dot d_{\lambda\rho}}{\rho<\theta}$ added along the iteration and $\cf(\lambda\theta)=\theta$, we obtain $\bfrak = \non(\Mwf) = \cov(\Mwf) = \dfrak = \theta$. which implies $\add(\Mwf) = \cof(\Mwf)=\theta$. Even more, we obtain $\omega^\omega \eqT\Cv_\Mwf \eqT \theta$.

In the final generic extension $V_{\lambda\theta}$, it is clear that $D:=\set{d_{\lambda\rho}}{\rho<\theta}$ is $\leq^*$-increasing and dominating in $\baire$. Denote $d'_\rho := d_{\lambda\rho}$ for $\rho<\theta$. We show that $\Lc(d'_\rho,\id) \eqT \theta$. On the one hand, $\theta \eqT\Cv_\Mwf \leqT \Lc(d'_\rho,\id)$. For the converse, define $F\colon \prod d'_\rho \to \theta$ such that, for $x\in \prod d'_\rho$, $F(x)$ is some ordinal $\eta>\rho$ such that $x\in V_{\lambda\eta}$; and define $F'\colon \theta\to \Swf(d'_\rho,\id)$ such that $F'(\varrho)$ is the $\Loc_{d'_\varrho, \id}$-generic real added by $\Qor_{\lambda\varrho}$ when $\varrho\geq\rho$, otherwise $F'(\varrho):= F'(\rho)$. It is clear that $(F,F')$ is the desired Tukey connection. 

Since $\Lc(d'_\rho,\id) \eqT \theta$, we get $\minLc=\supLc=\theta$, so by \autoref{thm:a13} $\add(\Nwf)=\cof(\Nwf)=\theta$. On the other hand, given that $\Por$ is ccc, the equality $\dfrak_\theta = \nu$ is preserved (see e.g.~\cite[Lem.~6.6]{CarMej23}). Now, by \autoref{thm:b3},
\[\Cv_{\NAwf} \leqT \prod_{b\in D}\Lc(b,\id) \eqT \la\theta,\leq \ra^\theta,\]
so $\cov(\NAwf) \leq \dfrak_\theta = \nu$.
\end{proof}

Finally, we show that $\cof(\MAwf)<\non(\SNwf)$ is valid in our model from~\cite[Thm.~4.2]{CMR2}.

\begin{theorem}
     Let $\theta\leq\lambda$ and $\nu$ be uncountable cardinals such that $\theta$ is regular, $\dfrak_\theta = \nu$ and $\lambda^{\aleph_0} = \lambda$. Then there is a ccc poset forcing 
\[
     \cov(\Nwf) = \aleph_1  \leq \add(\Mwf) = \cof(\Mwf) = \theta 
      \leq \non(\Nwf) =\cfrak =\lambda \text{ and }\cof(\MAwf)\leq \nu.
\]
In particular, it is consistent with $\thzfc$ that $\cof(\MAwf) < \non(\Nwf)$.
\end{theorem}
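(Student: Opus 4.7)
The plan is to reuse the same ccc forcing construction that the authors employ in~\cite[Thm.~4.2]{CMR2} to get $\cov(\MAwf)<\non(\SNwf)$, and simply read off the new inequality $\cof(\MAwf)\le\nu$ from the directed-scheme upper bound in \autoref{cor:b80}. So I would start by invoking that model: assume (wlog) $\dfrak_\theta=\nu$ holds in the ground model (force it with $\Fn_{<\theta}(\nu,2)$ if necessary, requiring $\theta^{<\theta}=\theta$ and $\nu^\theta=\nu$), and then run the FS iteration of ccc posets from~\cite[Thm.~4.2]{CMR2} of length $\lambda\theta$, which already yields $\cov(\Nwf)=\aleph_1$, $\add(\Mwf)=\cof(\Mwf)=\theta$ and $\non(\Nwf)=\cfrak=\lambda$. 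In particular, in the extension we have $\bfrak=\dfrak=\theta$.

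Next I would verify that the equality $\dfrak_\theta=\nu$ survives the iteration. This is routine: the poset is ccc, so no new sequences of length ${<}\theta$ of ordinals ${<}\theta$ of cofinality $\geq\theta$ are added, and standard preservation results (e.g.~\cite[Lem.~6.6]{CarMej23}, used in the proof of \autoref{thm:c0}) show that $\dfrak_\theta$ is preserved by ccc forcings under the hypothesis $\lambda^{\aleph_0}=\lambda$. Hence $\dfrak_\theta=\nu$ holds in the final model.

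The core combinatorial step is then to evaluate the upper bound. By \autoref{cor:b80},
\[
\cof(\MAwf)\;\le\;\cov\!\left(\bigl([\cof(\Mwf)]^{<\add(\Mwf)}\bigr)^{\dfrak}\right)
\;=\;\cov\!\left(\bigl([\theta]^{<\theta}\bigr)^{\theta}\right).
\]
By \autoref{fct:a2} applied to the ideal $[\theta]^{<\theta}$ on $\theta$, we have $\Cv_{([\theta]^{<\theta})^{\theta}}\eqT (\Cv_{[\theta]^{<\theta}})^{\theta}$, and since $\theta$ is regular, \autoref{ex:c4}\ref{ex:c4:3} yields $\Cv_{[\theta]^{<\theta}}\eqT\theta$ (as relational systems $\la\theta,\theta,\leq\ra$). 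Therefore
\[
\Cv_{([\theta]^{<\theta})^{\theta}}\;\eqT\;\theta^{\theta},
\]
so $\cov(([\theta]^{<\theta})^{\theta})=\dfrak(\theta^{\theta})=\dfrak_\theta=\nu$, giving the desired $\cof(\MAwf)\le\nu$.

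The only step that required some care is checking $\Cv_{[\theta]^{<\theta}}\eqT\theta$ and taking the $\theta$-th power cleanly; the rest is bookkeeping and pointing to the existing model. The main technical obstacle, if any, would be the preservation of $\dfrak_\theta$ along the iteration, but this is already the key trick used in the proof of \autoref{thm:c0} and carries over verbatim here. Taking $\nu<\lambda$ in the hypotheses gives the consistency of $\cof(\MAwf)<\non(\Nwf)$ asserted in the statement.
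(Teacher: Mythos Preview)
Your proposal is correct and follows essentially the same approach as the paper: invoke the model from \cite[Thm.~4.2]{CMR2}, note that ccc preserves $\dfrak_\theta=\nu$ (citing \cite[Lem.~6.6]{CarMej23}), and apply \autoref{cor:b80} to bound $\cof(\MAwf)$ by $\cov\bigl(([\theta]^{<\theta})^\theta\bigr)=\dfrak_\theta$. You spell out the identification $\cov\bigl(([\theta]^{<\theta})^\theta\bigr)=\dfrak_\theta$ via \autoref{fct:a2} and \autoref{ex:c4}\ref{ex:c4:3} more explicitly than the paper, which simply asserts it; this is a helpful expansion but not a different argument.
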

\begin{proof}
    Let $\Por$ be the FS iteration of ccc posets constructed in the proof of~\cite[Thm.~4.2]{CMR2}. It remains to show that $\Por$ forces $\cof(\MAwf)\leq \nu$. Since $\Por$ has the ccc, the identity $\dfrak_\theta=\nu$ is preserved in the generic extension. Now, in the generic extension, by \autoref{cor:b80}, $\cof(\MAwf)\leq \cov\left(([\theta]^{<\theta})^\theta\right) = \dfrak_\theta=\nu$. 
\end{proof}

We still do not know how to control the values of $\cov(\NAwf)$, $\cov(\MAwf)$ and $\cof(\MAwf)$ in the previous models. In the case of the cofinality, we may need to develop some framework for lower bounds as we did in~\cite{CarMej23} for the Yorioka ideals. In particular, we ask:

\begin{question}
    Is it consistent with $\thzfc$ that $\cof(\MAwf)>\cfrak$?
\end{question}

Concerning $\NAwf$ we may ask:

\begin{question}
    Does $\thzfc$ proves an inequality between $\cof(\NAwf)$ and $\cfrak$?
\end{question}

We have some ideas to construct a directed scheme for $\NAwf$ to prove the consistency of $\cof(\NAwf)<\cfrak$, but we will develop them in the second part of this work.


{\small
\bibliography{name}
\bibliographystyle{alpha}
}


\end{document}